\newcommand{\nc}{\newcommand}
\nc{\lan}{\big\langle}
\nc{\ran}{\big\rangle}
\nc{\kk}{{\mathsf{k}}}
\nc{\LL}{{\mathbb{L}}}
\nc{\PP}{{\mathbb{P}}}
\nc{\QQ}{{\mathbb{Q}}}
\nc{\RR}{{\mathbb{R}}}
\nc{\TT}{{\mathbb{T}}}
\nc{\ZZ}{{\mathbb{Z}}}
\nc{\CA}{{\mathcal{A}}}
\nc{\CB}{{\mathcal{B}}}
\nc{\D}{{\mathcal{D}}}
\nc{\CE}{{\mathcal{E}}}
\nc{\CM}{{\mathcal{M}}}
\nc{\CO}{{\mathcal{O}}}
\nc{\CS}{{\mathcal{S}}}
\nc{\CT}{{\mathcal{T}}}
\nc{\BP}{{\mathbf{P}}}
\nc{\TBP}{{\tilde{\BP}}}
\nc{\TD}{{\widetilde{\D}}}
\nc{\TCA}{{\tilde{\CA}}}
\nc{\TY}{{\widetilde{Y}}}
\nc{\RHom}{\mathop{\mathsf{RHom}}\nolimits}
\nc{\Hom}{\mathop{\mathsf{Hom}}\nolimits}
\nc{\Ext}{\mathop{\mathsf{Ext}}\nolimits}
\nc{\Tor}{\mathop{\mathsf{Tor}}\nolimits}
\nc{\Pic}{\mathop{\mathsf{Pic}}\nolimits}
\renewcommand{\Im}{\mathop{\mathsf{Im}}\nolimits}
\nc{\Br}{\mathop{\mathsf{Br}}\nolimits}
\nc{\Cone}{\mathop{\mathsf{Cone}}\nolimits}
\nc{\codim}{\mathop{\mathsf{codim}}\nolimits}
\nc{\sing}{{\mathsf{sing}}}
\nc{\perf}{{\mathsf{perf}}}
\nc{\Pf}{{\mathsf{Pf}}}
\nc{\Gr}{{\mathsf{Gr}}}
\nc{\GL}{{\mathsf{GL}}}
\nc{\PGL}{{\mathsf{PGL}}}
\nc{\ch}{{\mathsf{ch}}}
\nc{\td}{{\mathsf{td}}}
\nc{\id}{{\mathsf{id}}}
\theoremstyle{plain}
\newtheorem{theorem}{Theorem}[section]
\newtheorem{conjecture}[theorem]{Conjecture}
\newtheorem{lemma}[theorem]{Lemma}
\newtheorem{proposition}[theorem]{Proposition}
\newtheorem{corollary}[theorem]{Corollary}
\theoremstyle{definition}
\newtheorem{definition}[theorem]{Definition}
\theoremstyle{remark}
\newtheorem{remark}[theorem]{Remark}
\newenvironment{proof}{\noindent{\sf Proof:}}{\qed\medskip}
\newenvironment{proofof}[1]{\noindent{\sf #1:}}{\qed\medskip}
\title{Derived categories of cubic fourfolds}
\author{Alexander Kuznetsov}
\subjclass{14M15, 18E30}
\address{\sloppy
\parbox{0.9\textwidth}{
Algebra Section, Steklov Mathematical Institute,
8 Gubkin str., Moscow 119991 Russia
\hfill\\[5pt]
The Poncelet Laboratory, Independent University of Moscow
\hfill\\[5pt]
}}
\email{akuznet@@mi.ras.ru}
\date{}
\thanks{I was partially supported by
RFFI grants 05-01-01034, 07-01-00051 and 07-01-92211,
INTAS 05-1000008-8118,
the Russian Science Support Foundation,
and gratefully acknowledge of the support of the Pierre Deligne fund based on his 2004
Balzan prize in mathematics.}
\begin{document}

\begin{abstract}
We discuss the structure of the derived category of coherent sheaves
on cubic fourfolds of three types: Pfaffian cubics, cubics containing
a plane and singular cubics, and discuss its relation to the rationality
of these cubics.
\end{abstract}

\maketitle

\section{Introduction}


In this paper we are going to discuss one of the classical problems of birational
algebraic geometry, the problem of rationality of a generic cubic hypersurface
in $\PP^5$. Our point of view will be somewhat different from the traditional
approaches. We will use the derived category of coherent sheaves on the
cubic hypersurface (more precisely, a certain piece of this category)
as an indicator of nonrationality.

To be more precise, let $V$ be a vector space of dimension 6, so that $\PP(V) = \PP^5$.
Let $Y \subset \PP(V)$ be a hypersurface of degree $3$,
a cubic fourfold.
By Lefschetz hyperplane section theorem $\Pic Y = \ZZ$
and is generated by $H$, the restriction of the class of a hyperplane in $\PP(V)$.
By adjunction $K_Y = -3H$. So, $Y$ is a Fano variety.
By Kodaira vanishing we can easily compute the cohomology of line bundles $\CO_Y$, $\CO_Y(-1)$ and $\CO_Y(-2)$.
\begin{equation}\label{cohoy}
\dim H^p(Y,\CO_Y(-t)) = \begin{cases}
1, & \text{for $p = t = 0$}\\
0, & \text{for $-2 \le t \le 0$ and $(p,t) \ne (0,0)$}
\end{cases}
\end{equation}
As a consequence, we see that the triple $(\CO_Y, \CO_Y(1), \CO_Y(2))$ is an exceptional collection
in $\D^b(Y)$, the bounded derived category of coherent sheaves on $Y$.
We denote by $\CA_Y$ the orthogonal subcategory to this exceptional collection:
\begin{equation}\label{defay}
\hspace{-2em}
\CA_Y = \langle \CO_Y, \CO_Y(1), \CO_Y(2) \rangle^\perp =
\{ F \in \D^b(Y)\ |\ H^\bullet(Y,F) = H^\bullet(Y,F(-1)) = H^\bullet(Y,F(-2)) = 0 \}.
\hspace{-2em}
\end{equation}
Then we have a semiorthogonal decomposition
\begin{equation}\label{dby}
\D^b(Y) = \langle \CA_Y, \CO_Y, \CO_Y(1), \CO_Y(2) \rangle
\end{equation}
(see Section~\ref{secpre} for the definition of semiorthogonal decompositions).

This triangulated category $\CA_Y$ is the piece of $\D^b(Y)$ discussed above.
By many features it looks like the derived category of a K3 surface
(for example, its Serre functor equals the shift by two functor $[2]$,
and its Hochschild homology is very similar to that of a K3 surface).
Moreover, as we shall see, for some cubic fourfolds $\CA_Y$ is equivalent
to the derived category of a K3 surface. We expect that this happens if and only if $Y$ is rational.

\begin{conjecture}\label{4dconj}
The cubic fourfold $Y$ is rational if and only if the subcategory $\CA_Y \subset \D^b(Y)$ defined by~\eqref{defay}
is equivalent to the derived category of a K3 surface.
\end{conjecture}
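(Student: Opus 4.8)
\begin{proofof}{Towards Conjecture~\ref{4dconj}}

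Both implications are open; we sketch a possible line of attack and indicate where it stalls.

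\emph{From rationality to a K3 category.} Suppose $Y$ is rational and fix a birational map $Y \dashrightarrow \PP^4$. By weak factorization it decomposes into a chain of blow-ups and blow-downs along smooth centres, joining $Y$ to $\PP^4$ through smooth projective fourfolds $Y = Y_0, Y_1, \dots, Y_N = \PP^4$. For a blow-up with smooth centre $Z$ of codimension $c$, Orlov's formula supplies a semiorthogonal decomposition whose components are $\D^b$ of the base together with $c-1$ copies of $\D^b(Z)$. Feeding this into an invariant additive in semiorthogonal decompositions --- say Hochschild homology, which via~\eqref{dby} computes $HH_\bullet(\CA_Y)$ out of $HH_\bullet(Y)$ and the three line bundles --- one expresses $HH_\bullet(\CA_Y)$ as a signed combination of the Hochschild homologies of the centres $Z_i$ (the contribution of $\PP^4$ being trivial). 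Since the $Z_i$ are surfaces, curves and points and $HH_\bullet(\CA_Y)$ is known to have the shape of the Hochschild homology of a K3, this is a first, numerical, consistency check; the real task is to upgrade it to an honest equivalence $\CA_Y \simeq \D^b(S)$ with $S$ a K3 surface, by arguing that the surface responsible for the non-trivial part of $\CA_Y$ can in fact be chosen to be such a K3.

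\emph{From a K3 category to rationality.} Here no mechanism is known: one would have to extract from an equivalence $\CA_Y \simeq \D^b(S)$ a birational model of $\PP^4$, or equivalently a sufficiently large family of rational curves sweeping out $Y$. A natural entry point is the moduli-theoretic avatar of $\CA_Y$ --- for instance the Fano variety of lines $F(Y)$, a hyperk\"ahler fourfold tightly bound to $\CA_Y$, which for the known rational cubics is a (possibly twisted) Hilbert scheme of points on a K3 --- and one might hope to convert the equivalence into a correspondence between $Y$ and $\PP^4$ passing through $F(Y)$. For the three families studied in this paper --- Pfaffian cubics, cubics containing a plane, and cubics with a node --- both rationality (or its conjectural failure) and the presence or absence of an equivalence $\CA_Y \simeq \D^b(S)$ can be settled directly, and they always agree; these examples are the evidence for, and the proving ground of, any general argument.

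\emph{The main obstacle.} In the first direction the crux is the non-uniqueness of semiorthogonal decompositions: there is no Jordan--H\"older theorem for triangulated categories, so one cannot simply cancel matching pieces along the factorization chain, and it is unclear that a K3-type component, once it enters as a blow-up centre, is not destroyed by a subsequent blow-down; consequently only numerical shadows of $\CA_Y$ are accessible by this route, and even those depend on controlling which surfaces occur among the centres. In the second direction the obstruction is more fundamental --- there is at present no technique to deduce any (ir)rationality statement for a fourfold from a property of its derived category. For these reasons Conjecture~\ref{4dconj} is best regarded as an organizing principle, borne out by the examples below, rather than a statement with a proof within reach.

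\end{proofof}
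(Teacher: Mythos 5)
This is a conjecture, not a theorem, and the paper offers no proof of it --- the entire paper is an accumulation of evidence in its favour via the three families of Sections~\ref{secpf}, \ref{secplane}, \ref{secsing}. You correctly recognize this and, rather than faking a proof, give a candid survey of why both implications are out of reach; that is the appropriate response to being asked to ``prove'' a conjecture.

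Your discussion is in fact well aligned with the paper's own framing. On the ``only if'' side, the paper defers to~\cite{K4} and the notion of a Clemens--Griffiths component of the derived category, whose birational invariance would settle that direction; the author himself flags exactly the difficulty you name --- that ``it is very hard (and not completely clear how) to check that the definition of this component is correct,'' which is precisely the absence of a Jordan--H\"older--type control over semiorthogonal decompositions along a weak-factorization chain. Your Hochschild-homology consistency check is the right first-order shadow of that idea. On the ``if'' side you are also right that no mechanism is known for extracting a birational map from a derived equivalence; the paper makes no claim there either, and the link to the Fano variety of lines, while suggestive, is not developed in this paper. One small inaccuracy: you characterize the third family as ``cubics with a node,'' but Section~\ref{secsing} treats an arbitrary singular cubic fourfold with a chosen singular point $P$; there the result is weaker than an equivalence --- $\D^b(S)$ is only a \emph{crepant categorical resolution} of $\CA_Y$ (Theorem~\ref{dbsc}), consistent with the fact that for singular $Y$ the category $\CA_Y$ cannot itself be the derived category of a smooth K3. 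So the ``they always agree'' sentence should be tempered in that case.

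Net: no gap to report, because there is no proof to reconstruct; your essay is a fair account of the state of the conjecture and of the strategy (evidence through worked families plus a conjectural birational invariant) that the paper pursues.
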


The ``only if'' part of the above is a special case of a more general Conjecture suggested in~\cite{K4},
where a new birational invariant, the Clemens--Griffiths component of the derived category, is constructed.
While it is very hard (and not completely clear how) to check that the definition of this component is correct,
it is more or less straightforward that it is preserved under smooth blow-ups, which implies that it is
a birational invariant. In the case of the cubic fourfold $Y$ the Clemens--Griffiths component is either~$0$,
if $\CA_Y$ is equivalent to the derived category of a K3 surface, or $\CA_Y$ itself, in the opposite case.
So, Conjecture~\ref{4dconj} can be reformulated as stating that $Y$ is rational if and only if the Clemens--Griffiths
component of its derived category is zero.

Our goal is to give some evidence for Conjecture~\ref{4dconj}. More precisely, we will analyze
the category $\CA_Y$ for all cubic fourfolds that are known to be rational,
and show that in these cases $\CA_Y$ is equivalent to the derived category of some K3 surface.
Moreover, we will give examples of cubic fourfolds $Y$ for which $\CA_Y$ can not be equivalent
to the derived category of any K3 surface. So, we expect these cubic fourfolds to be nonrational.

Basically, there are three known series of rational cubic fourfolds:
\begin{enumerate}
\item Pfaffian cubic fourfolds~\cite{Tr1,Tr2};
\item cubic fourfolds $Y$ containing a plane $\BP \cong \PP^2$ and a $2$-dimensional algebraic cycle $T$
such that $T\cdot H^2 - T\cdot\BP$ is odd~\cite{Ha1,Ha2};
\item singular cubic fourfolds.
\end{enumerate}
We are dealing with these in Sections~\ref{secpf}, \ref{secplane} and~\ref{secsing} respectively
after introducing some necessary material in Section~\ref{secpre}. Moreover, in Section~\ref{secplane}
we investigate more general case of a cubic fourfold containing a plane without any additional conditions.
We show that in this case the category $\CA_Y$ is equivalent to the twisted derived category of a K3 surface $S$
(the twisting is given by a Brauer class of order 2) and in the Appendix we argue that this twisted derived
category is not equivalent to the derived category of any surface if $\Pic S \cong \ZZ$ and the Brauer class
is nontrivial, which is true for general cubic fourfold with a plane. So, we expect such cubic fourfolds
to be nonrational.

\medskip

{\bf Acknowledgement:} I am very grateful to A.~Bondal and D.~Orlov for useful discussions
and to L.~Katzarkov for inspiring and stimulating questions, and especially for attracting
my attention to the case of singular cubic fourfolds. I am also grateful to D.~Huybrechts
for clarifications on the twisted Chern character and to F.~Bogomolov and D.~Kaledin
for discussions on the Brauer group of K3 surfaces.

\section{Preliminaries}\label{secpre}

Let $\kk$ be an algebraically closed field of zero characteristic. All algebraic varieties will be
over $\kk$ and all additive categories will be $\kk$-linear.

By $\D^b(X)$ we denote the bounded derived category of coherent sheaves on an algebraic variety $X$.
This category is triangulated. For any morphism $f:X \to X'$ of algebraic varieties we denote
by $f_*:\D^b(X) \to \D^b(X')$ and by $f^*:\D^b(X') \to \D^b(X)$ the {\em derived}\/
push-forward and pull-back functors (in first case we need $f$ to be proper,
and in the second to have finite $\Tor$-dimension, these assumptions ensure the functors
to preserve both boundedness and coherence). Similarly, $\otimes$
stands for the {\em derived}\/ tensor product.

For a proper morphism of finite $\Tor$-dimension $f:X \to X'$ we will also use the right adjoint $f^!$
of the push-forward functor, which is given by the formula
$$
f^!(F) \cong f^*(F)\otimes\omega_{X/X'}[\dim X - \dim X'],
$$
where $\omega_{X/X'}$ is the relative canonical line bundle.

\subsection{Semiorthogonal decompositions}

Let $\CT$ be a triangulated category.

\begin{definition}[\cite{BK,BO}]
A {\sf semiorthogonal decomposition}\/ of a triangulated category $\CT$ is a sequence of
full triangulated subcategories $\CA_1,\dots,\CA_n$ in $\CT$ such that
$\Hom_{\CT}(\CA_i,\CA_j) = 0$ for $i > j$
and for every object $T \in \CT$ there exists a chain of morphisms
$0 = T_n \to T_{n-1} \to \dots \to T_1 \to T_0 = T$ such that
the cone of the morphism $T_k \to T_{k-1}$ is contained in $\CA_k$
for each $k=1,2,\dots,n$.
\end{definition}

We will write $\CT = \langle \CA_1,\CA_2,\dots,\CA_n \rangle$ for a semiorthogonal
decomposition of a triangulated category $\CT$ with components $\CA_1,\CA_2,\dots,\CA_n$.

An important property of a triangulated subcategory $\CA \subset \CT$ ensuring that
it can be extended to a semiorthogonal decomposition is admissibility.

\begin{definition}[\cite{BK,B}]
A full triangulated subcategory $\CA$ of a triangulated category $\CT$ is called
{\sf admissible}\/ if for the inclusion functor $i:\CA \to \CT$ there is
a right adjoint $i^!:\CT \to \CA$, and a left adjoint $i^*:\CT \to \CA$ functors.
\end{definition}

\begin{lemma}[\cite{BK,B}]\label{sos_sod}
$(i)$ If\/ $\CA_1,\dots,\CA_n$ is a semiorthogonal sequence
of admissible subcategories in a triangulated category $\CT$\/
{\rm(}i.e. $\Hom_{\CT}(\CA_i,\CA_j) = 0$ for $i > j${\rm)}\/ then
$$
\lan\CA_1,\dots,\CA_k,
{}^\perp\lan\CA_1,\dots,\CA_k\ran \cap \lan\CA_{k+1},\dots,\CA_n\ran{}^\perp,
\CA_{k+1},\dots,\CA_n\ran
$$
is a semiorthogonal decomposition.

$(ii)$ If\/ $\D^b(X) = \lan \CA_1,\CA_2,\dots,\CA_n \ran$ is a semiorthogonal decomposition
of the derived category of a smooth projective variety $X$ then each subcategory $\CA_i \subset \D^b(X)$
is admissible.
\end{lemma}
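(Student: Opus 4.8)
The plan is to derive both parts from one elementary fact about an admissible subcategory $\CA\subset\CT$: the inclusion $i\colon\CA\hookrightarrow\CT$ together with its adjoints produces two semiorthogonal decompositions, $\CT=\lan\CA^\perp,\CA\ran$ and $\CT=\lan\CA,{}^\perp\CA\ran$. To see this, take $T\in\CT$ and the counit $ii^!T\to T$ of the adjunction $(i,i^!)$; completing it to a triangle $ii^!T\to T\to C$ and applying $\Hom_\CT(iA,-)$ for $A\in\CA$, the triangle identities together with full faithfulness of $i$ show that $\Hom_\CT(iA,ii^!T)\to\Hom_\CT(iA,T)$ is an isomorphism, whence $C\in\CA^\perp$; since $\Hom_\CT(\CA,\CA^\perp)=0$ by definition, this triangle exhibits $\CT=\lan\CA^\perp,\CA\ran$. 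The other decomposition follows dually from the unit $T\to ii^*T$ of $(i^*,i)$.

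For part $(i)$ I would use induction on $n$. For $n=1$ the values $k=0,1$ give precisely the two decompositions from the fact above. For the inductive step with $k\ge 1$, observe that $\Hom_\CT(\CA_j,\CA_1)=0$ for all $j\ge 2$, so $\CA_2,\dots,\CA_n$ all lie in the triangulated subcategory $\CS:={}^\perp\CA_1$, and so does the middle piece $\CE:={}^\perp\lan\CA_1,\dots,\CA_k\ran\cap\lan\CA_{k+1},\dots,\CA_n\ran^\perp$ (because ${}^\perp\lan\CA_1,\dots,\CA_k\ran\subseteq{}^\perp\CA_1=\CS$). Each $\CA_j$ with $j\ge 2$ is again admissible inside $\CS$ — restrict its adjoints, using that $\CS\subset\CT$ is full — and a short computation shows $\CE$ is exactly the corresponding middle piece for $\CA_2,\dots,\CA_n$ in $\CS$ with index $k-1$. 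The induction hypothesis then gives $\CS=\lan\CA_2,\dots,\CA_k,\CE,\CA_{k+1},\dots,\CA_n\ran$, and splicing this into $\CT=\lan\CA_1,{}^\perp\CA_1\ran=\lan\CA_1,\CS\ran$ — legitimate since $\Hom_\CT(\CS,\CA_1)=0$ and one just refines the two-step filtration of any object — produces the asserted decomposition of $\CT$. The case $k=0$ is symmetric, splicing instead into $\CT=\lan\CA_n^\perp,\CA_n\ran$. Semiorthogonality of the final sequence is immediate from the hypotheses and the definition of $\CE$; the only real labour is the elementary bookkeeping of orthogonal complements inside $\CS$, which I expect to be the most tedious step though it is not deep.

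For part $(ii)$ the one genuinely non-formal ingredient is that $\D^b(X)$ for $X$ smooth and projective is a \emph{saturated} triangulated category, by \cite{BK} and \cite{B}: it is $\Ext$-finite, idempotent complete, carries a Serre functor $-\otimes\omega_X[\dim X]$, and every cohomological functor of finite type into finite-dimensional vector spaces — covariant or contravariant — is representable. (This is exactly where smoothness and projectivity of $X$ enter, via the existence of a strong generator.) These properties descend to a semiorthogonal component $\CA_i$ of a decomposition $\D^b(X)=\lan\CA_1,\dots,\CA_n\ran$: it is thick, hence idempotent complete, by semiorthogonality and uniqueness of the semiorthogonal filtration, and it inherits $\Ext$-finiteness, a Serre functor and saturatedness. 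Therefore, for any $T\in\D^b(X)$ the finite-type cohomological functors $A\mapsto\Hom_{\D^b(X)}(A,T)$ and $A\mapsto\Hom_{\D^b(X)}(T,A)$ on $\CA_i$ are, respectively, representable and corepresentable; by Yoneda the representing objects assemble into a right adjoint $i_{\CA_i}^!$ and a left adjoint $i_{\CA_i}^*$ of the inclusion $\CA_i\hookrightarrow\D^b(X)$, so $\CA_i$ is admissible. The main obstacle here is precisely this (co)representability: for a general triangulated category the outermost components of a semiorthogonal decomposition are automatically one-sidedly admissible, but the inner ones need not be admissible at all, so an input like saturatedness is indispensable.
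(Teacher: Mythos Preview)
The paper does not prove this lemma; it cites \cite{BK,B}, and the sentence immediately following the statement notes that part~$(ii)$ holds more generally for any saturated triangulated category. Your argument for part~$(i)$ is the standard one and is correct.

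In part~$(ii)$ there is a small circularity to untangle. You assert that each component $\CA_i$ ``inherits \dots\ a Serre functor and saturatedness'' and then use saturatedness of $\CA_i$ to (co)represent the $\Hom$-functors and produce the adjoints. But the result that a full triangulated subcategory of a saturated category is again saturated applies to \emph{admissible} subcategories, which is exactly what you are trying to establish; an inner component $\CA_i$ with $1<i<n$ is not a~priori even one-sidedly admissible in $\D^b(X)$, since the projection functor to $\CA_i$ coming from the filtration is in general neither a left nor a right adjoint to the inclusion. The repair is to organise the argument inductively. From the decomposition, $\CA_1$ is automatically left admissible (the projection to $\CA_1$ is a left adjoint), and the key point from \cite{BK} is that in a saturated category a one-sidedly admissible subcategory is already two-sidedly admissible --- one extends a cohomological functor on $\CA_1$ to $\D^b(X)$ via the existing adjoint, represents it there, and checks that the representing object lies in $\CA_1$. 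Hence $\CA_1$ is admissible, so ${}^\perp\CA_1=\lan\CA_2,\dots,\CA_n\ran$ is admissible and therefore saturated, and one recurses inside it. With this adjustment your proof goes through.
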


Actually the second part of the Lemma holds for any {\em saturated}\/ (see~\cite{BK}) triangulated category.

\begin{definition}[\cite{B}]
An object $F \in \CT$ is called {\em exceptional}\/ if $\Hom(F,F)=\kk$
and $\Ext^p(F,F)=0$ for all $p\ne 0$. A collection of exceptional
objects $(F_1,\dots,F_m)$ is called {\em exceptional}\/ if
$\Ext^p(F_l,F_k)=0$ for all $l > k$ and all $p\in\ZZ$.
\end{definition}

Assume that $\CT$ is {\em $\Hom$-finite}\/ (which means that for any $G,G' \in \CT$ the vector space $\oplus_{t\in\ZZ} \Hom(G,G'[t])$ is finite-dimensional).

\begin{lemma}[\cite{B}]\label{eo}
The subcategory $\lan F \ran$ of $\D^b(X)$ generated by an exceptional object $F$ is
admissible and is equivalent to the derived category of vector spaces $\D^b(\kk)$.
\end{lemma}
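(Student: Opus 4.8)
The plan is to verify the two assertions separately: first that $\langle F \rangle \simeq \D^b(\kk)$, and then that this subcategory is admissible in $\D^b(X)$. For the first part, I would define a functor $\Phi: \D^b(\kk) \to \D^b(X)$ by sending the one-dimensional vector space $\kk$ (in degree $0$) to $F$, and more generally a complex of vector spaces $W^\bullet$ to $W^\bullet \otimes_\kk F$ (i.e.\ a finite direct sum of shifted copies of $F$ according to the cohomology of $W^\bullet$, which is legitimate since over a field every complex is formal). The essential image of $\Phi$ is precisely the smallest full triangulated subcategory containing $F$, which is $\langle F \rangle$ by definition. To see $\Phi$ is fully faithful, it suffices by additivity and the triangulated structure to check it on the generator, i.e.\ to check that $\Phi$ induces an isomorphism $\Hom_{\D^b(\kk)}(\kk, \kk[t]) \to \Hom_{\D^b(X)}(F, F[t])$ for all $t \in \ZZ$; but the left-hand side is $\kk$ for $t = 0$ and $0$ otherwise, and the right-hand side is $\Ext^t(F,F)$, which is exactly $\kk$ for $t=0$ and $0$ otherwise by the definition of an exceptional object. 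A fully faithful exact functor identifies $\D^b(\kk)$ with its image $\langle F \rangle$, giving the equivalence.

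For admissibility I must produce a right adjoint $i^!$ and a left adjoint $i^*$ to the inclusion $i: \langle F \rangle \hookrightarrow \D^b(X)$. The natural candidates are the functors
\begin{equation*}
i^!(G) = \RHom_X(F, G) \otimes_\kk F, \qquad i^*(G) = \RHom_X(G, F)^\vee \otimes_\kk F,
\end{equation*}
where $\RHom_X(F,G) = \bigoplus_t \Hom(F, G[t])[-t]$ is regarded as an object of $\D^b(\kk)$; this object is bounded and has finite-dimensional cohomology because $\D^b(X)$ is $\Hom$-finite, so the expressions above indeed land in $\langle F \rangle$. For $i^!$ one checks the adjunction $\Hom_{\D^b(X)}(i(V \otimes F), G) \cong \Hom_{\langle F \rangle}(V \otimes F, i^!(G))$ directly: the left side is $\Hom_{\D^b(\kk)}(V, \RHom_X(F,G))$ by the computation of the first part applied with $G$ in the second slot, and the right side unwinds to the same thing using full faithfulness of $i$ on $\langle F \rangle$. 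The argument for $i^*$ is dual, using Serre-type duality only at the level of the vector spaces $\RHom_X(G,F)$, so no smoothness or properness input is needed beyond $\Hom$-finiteness.

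The only genuinely delicate point is checking that the candidate functors $i^!$ and $i^*$ are actually \emph{functorial} and exact, and that the unit/counit of the adjunction can be constructed as morphisms of functors rather than merely as collections of isomorphisms on objects; this is the usual subtlety with $\RHom$ into a triangulated category, and the cleanest route is to note that $\langle F \rangle$ is equivalent to $\D^b(\kk)$ and that for the inclusion of a subcategory generated by one exceptional object the projection onto it is computed by the canonical triangle $i i^! G \to G \to G'$ with $G' \in \langle F \rangle^\perp$ (respectively $G'' \to G \to ii^*G$ with $G'' \in {}^\perp\langle F\rangle$); one builds this triangle explicitly using the evaluation map $\RHom_X(F,G)\otimes F \to G$ and shows its cone is right-orthogonal to $F$ by applying $\Hom(F, -)$ and using exceptionality. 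Once the triangle is produced functorially, admissibility follows from the standard criterion that a full triangulated subcategory is admissible iff every object fits into such a decomposition triangle.
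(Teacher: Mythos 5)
Your proof takes essentially the same approach as the paper's: define the functor $V \mapsto V \otimes F$ on $\D^b(\kk)$, observe full faithfulness from the exceptionality of $F$, and exhibit the adjoints built from $\RHom(F,-)$ and $\RHom(-,F)^*$. The paper states these steps more tersely; your added remarks on functoriality of the adjoints are a reasonable elaboration of the same route rather than a different argument.
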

\begin{proof}
Consider the functor $\D^b(\kk) \to \D^b(X)$ defined by $V \mapsto V\otimes F$,
where $V$ is a complex of vector spaces. It is fully faithful since $F$ is exceptional,
hence the subcategory $\lan F \ran$ of $\D^b(X)$ is equivalent to~$\D^b(\kk)$. The adjoint functors
are given by $G \mapsto \RHom(G,F)^*$ and $G \mapsto \RHom(F,G)$ respectively.
\end{proof}

As a consequence of~\ref{sos_sod} and of~\ref{eo} one obtains the following

\begin{corollary}[\cite{BO}]
If $X$ is a smooth projective algebraic variety then any exceptional collection $F_1,\dots,F_m$ in $\D^b(X)$
induces a semiorthogonal decomposition
$$
\D^b(V) = \langle \CA , F_1, \dots, F_m \rangle
$$
where $\CA = \langle F_1, \dots, F_m \rangle^\perp = \{F \in \D^b(X)\ |\ \text{$\Ext^\bullet(F_k,F) = 0$ for all $1 \le k \le m$}\}$.
\end{corollary}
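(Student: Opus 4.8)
The plan is to deduce the statement directly from Lemma~\ref{eo} and Lemma~\ref{sos_sod}, with no new ideas needed. First, since $X$ is smooth and projective the category $\D^b(X)$ is $\Hom$-finite, so Lemma~\ref{eo} applies to each of the exceptional objects $F_1,\dots,F_m$: the subcategory $\lan F_k \ran \subset \D^b(X)$ generated by $F_k$ is admissible (and equivalent to $\D^b(\kk)$) for every $k = 1,\dots,m$.

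Second, I would verify that $(\lan F_1\ran,\dots,\lan F_m\ran)$ is a semiorthogonal sequence of admissible subcategories in the sense of Lemma~\ref{sos_sod}, i.e.\ that $\Hom_{\D^b(X)}(\lan F_l\ran,\lan F_k\ran) = 0$ whenever $l > k$. Since $\lan F_l\ran$ is generated by $F_l$ and $\lan F_k\ran$ by $F_k$ under shifts and cones, and $\Hom$ out of a cone (resp.\ into a cone) is controlled by the $\Hom$'s out of (resp.\ into) the two ends of the defining triangle, this vanishing reduces to $\Hom_{\D^b(X)}(F_l,F_k[t]) = \Ext^t(F_l,F_k) = 0$ for all $t\in\ZZ$ and all $l > k$, which is precisely the defining property of an exceptional collection.

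Third, I would apply Lemma~\ref{sos_sod}$(i)$ to this sequence, taking the cut at index $k = 0$. The ``inserted'' component is then ${}^\perp\lan\ \ran \cap \lan\lan F_1\ran,\dots,\lan F_m\ran\ran^\perp$; since the left orthogonal of the empty family is all of $\D^b(X)$, this equals $\lan F_1,\dots,F_m\ran^\perp = \CA$. Hence Lemma~\ref{sos_sod}$(i)$ delivers the semiorthogonal decomposition $\D^b(X) = \lan \CA, \lan F_1\ran,\dots,\lan F_m\ran\ran$, which under the equivalences $\lan F_k\ran \simeq \D^b(\kk)$ is written $\lan \CA, F_1,\dots,F_m\ran$. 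Unwinding the definition of the right orthogonal then gives the stated description $\CA = \{F \in \D^b(X) \mid \Ext^\bullet(F_k,F) = 0 \text{ for all } 1 \le k \le m\}$.

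Every step is routine once the two lemmas are in hand; the only points that deserve a word of care are the reduction in the second step from $\Hom$ between the generated subcategories to $\Ext$ between the generating objects, and the observation that the degenerate case $k=0$ of Lemma~\ref{sos_sod}$(i)$ is legitimate (the inserted component simply becomes the full right orthogonal $\CA$). I do not anticipate any genuine obstacle.
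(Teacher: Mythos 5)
Your proof is correct and follows exactly the route the paper intends: the corollary is stated immediately after the remark ``As a consequence of~\ref{sos_sod} and of~\ref{eo} one obtains the following,'' and your argument is precisely the assembly of those two lemmas (admissibility of each $\lan F_k\ran$, semiorthogonality from the exceptional-collection condition, and Lemma~\ref{sos_sod}$(i)$ with the cut at $k=0$). Nothing to add.
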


An example of such a semiorthogonal decomposition is given by~\eqref{dby}. Indeed, by~\eqref{cohoy}
the triple of line bundles $(\CO_Y, \CO_Y(1), \CO_Y(2))$ is an exceptional collection in $\D^b(Y)$, which gives
a semiorthogonal decomposition with the first component defined by~\eqref{defay}.

\subsection{Mutations}

If a triangulated category $\CT$ has a semiorthogonal decomposition then
usually it has quite a lot of them. More precisely, there are two groups
acting on the set of semiorthogonal decompositions --- the group of
autoequivalences of $\CT$, and a certain braid group. The action of the braid
group is given by the so-called mutations.

Roughly speaking, the mutated decomposition is obtained by dropping one of the components
of the decomposition and then extending the obtained semiorthogonal collection
by inserting new component at some other place as in Lemma~\ref{sos_sod}.
More precisely, the basic two operations are defined as follows.

\begin{lemma}[\cite{B}]\label{mutfun}
Assume that $\CA \subset \CT$ is an admissible subcategory, so that we have
two semiorthogonal decompositions $\CT = \lan \CA^\perp, \CA \ran$
and $\CT = \lan \CA, {}^\perp\CA \ran$.
Then there are functors $\LL_\CA,\RR_\CA: \CT \to \CT$
vanishing on~$\CA$ and inducing mutually inverse equivalences
${}^\perp\CA \to \CA^\perp$ and $\CA^\perp \to {}^\perp\CA$ respectively.
\end{lemma}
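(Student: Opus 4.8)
The plan is to realize $\LL_\CA$ and $\RR_\CA$ as the projection functors attached to the two semiorthogonal decompositions, built from the adjoints $i^!$ and $i^*$ of the inclusion $i:\CA \to \CT$, and then to read the asserted equivalences off the decomposition triangles.

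First I would construct the two functors. For $T \in \CT$, complete the counit $i\, i^! T \to T$ to a triangle $i\, i^! T \to T \to \LL_\CA T \to i\, i^! T[1]$. Since $i$ is fully faithful, $\Hom(A, i\, i^! T) \to \Hom(A,T)$ is an isomorphism for every $A \in \CA$ and every shift, so $\LL_\CA T \in \CA^\perp$; i.e. this is the decomposition triangle of $T$ for $\lan \CA^\perp, \CA\ran$. The same long exact sequences give $\Hom(\LL_\CA T, B) \cong \Hom(T,B)$ for all $B \in \CA^\perp$, so $\LL_\CA T$ represents $B \mapsto \Hom_\CT(T,B)$ on $\CA^\perp$. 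Hence $T \mapsto \LL_\CA T$ is the left adjoint of the inclusion $\CA^\perp \hookrightarrow \CT$, in particular a genuine functor $\CT \to \CT$, and as a left adjoint of an exact functor it is exact. Dually, completing the unit $T \to i\, i^* T$ to a triangle $\RR_\CA T \to T \to i\, i^* T$ defines $\RR_\CA:\CT\to\CT$ as the right adjoint of ${}^\perp\CA \hookrightarrow \CT$, again exact. If $T \in \CA$ then the counit $i\, i^! T \to T$ and the unit $T \to i\, i^* T$ are isomorphisms, so $\LL_\CA T = \RR_\CA T = 0$; thus both functors vanish on $\CA$.

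Next I would verify the equivalence. As $\LL_\CA$ lands in $\CA^\perp$ and $\RR_\CA$ lands in ${}^\perp\CA$, restriction gives functors $\LL_\CA: {}^\perp\CA \to \CA^\perp$ and $\RR_\CA: \CA^\perp \to {}^\perp\CA$. For $T \in {}^\perp\CA$ one has $i^* T = 0$, hence $\RR_\CA T \cong T$; applying the exact functor $\RR_\CA$ to $i\, i^! T \to T \to \LL_\CA T$ and using $\RR_\CA(i\, i^! T) = 0$ yields $\RR_\CA(\LL_\CA T) \cong \RR_\CA T \cong T$. Symmetrically, for $S \in \CA^\perp$ one has $i^! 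S = 0$, hence $\LL_\CA S \cong S$, and applying $\LL_\CA$ to $\RR_\CA S \to S \to i\, i^* S$ with $\LL_\CA(i\, i^* S) = 0$ gives $\LL_\CA(\RR_\CA S) \cong S$. Since these isomorphisms are induced by the adjunction (co)units they are natural, so $\LL_\CA|_{{}^\perp\CA}$ and $\RR_\CA|_{\CA^\perp}$ are mutually inverse equivalences.

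The one point that really needs care is functoriality: cones in a triangulated category are not functorial, so one cannot simply declare $\Cone(i\, i^! T \to T)$ to be a functor of $T$. I get around this by identifying $\LL_\CA$ (resp. $\RR_\CA$) with the left (resp. right) adjoint of an inclusion functor, whose existence I check via the pointwise representability criterion above; functoriality and exactness then come for free. The remaining verifications are routine manipulations of the long exact sequences attached to the two decomposition triangles, so I would not expect any further difficulty.
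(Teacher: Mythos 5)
Your proof is correct and follows essentially the same route as the paper: both define $\LL_\CA$ and $\RR_\CA$ as cones of the counit $ii^!F\to F$ and the unit $F\to ii^*F$ respectively, with your identification of $\LL_\CA$ (resp.\ $\RR_\CA$) as the left (resp.\ right) adjoint of the inclusion of $\CA^\perp$ (resp.\ ${}^\perp\CA$) being just a cleaner formulation of the paper's remark that ``the cones in these triangles are functorial due to the semiorthogonality.'' The remaining verifications (vanishing on $\CA$, mutual inverseness via the two decomposition triangles) match what the paper leaves to the reader, so the two arguments coincide in substance.
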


The functors $\LL_\CA$ and $\RR_\CA$ are known as the {\em left}\/ and the {\em right mutation functors}.

\begin{proof}
Let $i:\CA \to \CT$ be the embedding functor. For any $F \in \CT$ we define
$$
\LL_\CA(F) = \Cone (i i^! F \to F),
\qquad
\RR_\CA(F) = \Cone (F \to i i^* F)[-1].
$$
Note that the cones in this triangles are functorial due to the semiorthogonality.
All the properties are verified directly.
\end{proof}

\begin{remark}
If $\CA$ is generated by an exceptional object $E$ we can use explicit formulas for
the adjoint functors $i^!$, $i^*$ of the embedding functor $i:\CA \to \CT$. Thus we obtain
the following distinguished triangles
\begin{equation}\label{excmut}
\RHom(E,F)\otimes E \to F \to \LL_E(F),
\qquad
\RR_E(F) \to F \to \RHom(F,E)^*\otimes E.
\end{equation}
\end{remark}

It is easy to deduce from Lemma~\ref{mutfun} the following

\begin{corollary}[\cite{B}]
Assume that $\CT = \lan \CA_1,\CA_2,\dots,\CA_n \ran$ is a semiorthogonal decomposition with
all components being admissible. Then for each $1 \le k \le n-1$ there is a semiorthogonal decomposition
$$
\CT = \lan \CA_1,\dots,\CA_{k-1},
\LL_{\CA_k}(\CA_{k+1}),
\CA_k,\CA_{k+2},\dots,\CA_n \ran
$$
and for each $2 \le k \le n$ there is a semiorthogonal decomposition
$$
\CT = \lan \CA_1,\dots,\CA_{k-2},\CA_k,
\RR_{\CA_k}(\CA_{k-1}),
\CA_{k+1},\dots,\CA_n \ran
$$
\end{corollary}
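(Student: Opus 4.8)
The plan is to deduce both decompositions from Lemma~\ref{mutfun} by the standard ``coarsen, mutate, refine'' procedure, which reduces everything to the two-term situation handled by that lemma. I will carry out the first (left) statement; the second is entirely analogous, with $\LL_{\CA_k}$ replaced by $\RR_{\CA_k}$ and $\CA_{k+1}$ by $\CA_{k-1}$, and in any case follows from the first by passing to the opposite category, which interchanges the two statements.

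First I would coarsen the given decomposition: grouping the two components $\CA_k$ and $\CA_{k+1}$ together, set $\CM=\lan\CA_k,\CA_{k+1}\ran$, so that $\CT=\lan\CA_1,\dots,\CA_{k-1},\CM,\CA_{k+2},\dots,\CA_n\ran$ is again a semiorthogonal decomposition --- this is immediate from the definition, as the chain of morphisms witnessing the decomposition by $\CA_1,\dots,\CA_n$ witnesses the coarsened one after composing the two middle arrows. Next I would observe that $\CA_k$ is admissible as a subcategory of $\CM$: the adjoint functors $i^!,i^*\colon\CT\to\CA_k$ of the inclusion $\CA_k\hookrightarrow\CT$ land in $\CA_k\subset\CM$, and being adjoint to the inclusion over all of $\CT$ they remain adjoint after restriction to $\CM$. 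Since $\CM=\lan\CA_k,\CA_{k+1}\ran$, the subcategory $\CA_{k+1}$ is exactly the left orthogonal ${}^\perp\CA_k$ taken inside $\CM$. Hence Lemma~\ref{mutfun}, applied with ambient category $\CM$ and $\CA=\CA_k$, provides a semiorthogonal decomposition $\CM=\lan\CA_k^\perp,\CA_k\ran$ (orthogonal inside $\CM$) together with a functor $\LL_{\CA_k}\colon\CM\to\CM$ restricting to an equivalence $\CA_{k+1}={}^\perp\CA_k\xrightarrow{\ \sim\ }\CA_k^\perp$. In particular $\LL_{\CA_k}(\CA_{k+1})=\CA_k^\perp$ as a subcategory of $\CM$, so $\CM=\lan\LL_{\CA_k}(\CA_{k+1}),\CA_k\ran$. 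Since the adjoints restrict, this mutation functor agrees on $\CA_{k+1}$ with the one attached to $\CA_k\subset\CT$, so the notation is unambiguous; when $\CA_k=\lan E\ran$ for an exceptional $E$ it is the one given by the triangle~\eqref{excmut}.

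Finally I would refine: substituting $\CM=\lan\LL_{\CA_k}(\CA_{k+1}),\CA_k\ran$ into $\CT=\lan\CA_1,\dots,\CA_{k-1},\CM,\CA_{k+2},\dots,\CA_n\ran$ yields
\[
\CT=\lan\CA_1,\dots,\CA_{k-1},\ \LL_{\CA_k}(\CA_{k+1}),\ \CA_k,\ \CA_{k+2},\dots,\CA_n\ran,
\]
because refining one block of a semiorthogonal decomposition by a semiorthogonal decomposition of that block is again a semiorthogonal decomposition: semiorthogonality of the enlarged collection is read off from that of the two given ones, and the required chain of morphisms is obtained by splicing the two relevant chains via the octahedral axiom. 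The second decomposition is proved identically, starting from $\CM'=\lan\CA_{k-1},\CA_k\ran=\lan\CA_k,\RR_{\CA_k}(\CA_{k-1})\ran$. I do not expect any serious obstacle here; the only point needing a little care is the ``locality'' just noted --- that the mutation functor may be computed inside the two-term block $\CM$ rather than in all of $\CT$ --- which is precisely what lets one invoke Lemma~\ref{mutfun} verbatim.
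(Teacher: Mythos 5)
Your proof is correct, and it fills in exactly the argument the paper leaves implicit (the paper merely says the corollary "is easy to deduce from Lemma~\ref{mutfun}" and gives no details). The coarsen--mutate--refine strategy, the observation that $\CA_k$ remains admissible in the two-term block $\CM=\lan\CA_k,\CA_{k+1}\ran$ so that the mutation may be computed locally, and the reduction of the right-mutation statement to the left one via the opposite category, are all standard and sound.
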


There are two cases when the action of the mutation functors is particularly simple.

\begin{lemma}\label{perpmut}
Assume that $\CT = \lan \CA_1,\CA_2,\dots,\CA_n \ran$ is a semiorthogonal decomposition with
all components being admissible. Assume also that the components $\CA_k$ and $\CA_{k+1}$
are completely orthogonal, i.e. $\Hom(\CA_k,\CA_{k+1}) = 0$ as well as $\Hom(\CA_{k+1},\CA_k) = 0$. Then
$$
\LL_{\CA_k}(\CA_{k+1}) = \CA_{k+1}
\qquad\text{and}\qquad
\RR_{\CA_{k+1}}(\CA_k) = \CA_k,
$$
so that both the left mutation of $\CA_{k+1}$ through $\CA_k$ and the right mutation of $\CA_k$ through $\CA_{k+1}$
boil down to just a permutation and
$$
\CT = \lan \CA_1,\dots,\CA_{k-1},\CA_{k+1},\CA_k,\CA_{k+2},\dots,\CA_n \ran
$$
is the resulting semiorthogonal decomposition of $\CT$.
\end{lemma}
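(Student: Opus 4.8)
The plan is to reduce everything to the explicit description of the mutation functors given in Lemma~\ref{mutfun} together with the semiorthogonality hypotheses. First I would record the abstract setup: by Lemma~\ref{sos_sod}$(i)$ applied to the admissible semiorthogonal sequence $\CA_1,\dots,\CA_n$, the category $\CB = {}^\perp\lan\CA_1,\dots,\CA_{k-1}\ran \cap \lan\CA_{k+2},\dots,\CA_n\ran^\perp$ carries the induced semiorthogonal decomposition $\CB = \lan\CA_k,\CA_{k+1}\ran$ (here $\CA_k$ and $\CA_{k+1}$ are admissible inside $\CB$ as well). Since all mutation functors in sight vanish on the components $\CA_1,\dots,\CA_{k-1},\CA_{k+2},\dots,\CA_n$ and act within $\CB$, it suffices to treat the case $n=2$: $\CT = \lan\CA_1,\CA_2\ran$ with $\Hom(\CA_1,\CA_2) = 0 = \Hom(\CA_2,\CA_1)$, and to show $\LL_{\CA_1}(\CA_2) = \CA_2$ and $\RR_{\CA_2}(\CA_1) = \CA_1$.

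Next I would exploit complete orthogonality to compute the relevant adjoints. Because $\Hom(\CA_1,\CA_2) = 0$, the decomposition $\CT = \lan\CA_1,\CA_2\ran$ also reads $\CT = \lan\CA_2,\CA_1\ran$ after permutation — but more to the point, for $F \in \CA_2$ the object $i_1^! F \in \CA_1$ satisfies $\Hom_{\CA_1}(G, i_1^! F) = \Hom_{\CT}(i_1 G, F) = 0$ for every $G \in \CA_1$, the last vanishing by $\Hom(\CA_1,\CA_2) = 0$; hence $i_1^! F = 0$. Then by the formula in the proof of Lemma~\ref{mutfun},
\[
\LL_{\CA_1}(F) = \Cone(i_1 i_1^! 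F \to F) = \Cone(0 \to F) = F
\]
for all $F \in \CA_2$, so $\LL_{\CA_1}(\CA_2) = \CA_2$. Symmetrically, for $F \in \CA_1$ the object $i_2^* F \in \CA_2$ satisfies $\Hom_{\CA_2}(i_2^* F, G) = \Hom_{\CT}(F, i_2 G) = 0$ for all $G \in \CA_2$ by $\Hom(\CA_1,\CA_2) = 0$, so $i_2^* F = 0$ and $\RR_{\CA_2}(F) = \Cone(F \to i_2 i_2^* F)[-1] = \Cone(F \to 0)[-1] = F$; thus $\RR_{\CA_2}(\CA_1) = \CA_1$. (Note only one of the two orthogonality conditions is actually used for each identity; the other guarantees that the permuted sequence is still semiorthogonal.)

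Finally, to conclude that $\CT = \lan\CA_1,\dots,\CA_{k-1},\CA_{k+1},\CA_k,\CA_{k+2},\dots,\CA_n\ran$ is a genuine semiorthogonal decomposition, I would invoke Lemma~\ref{sos_sod}$(i)$ once more, or simply check the definition directly: the $\Hom$-vanishing $\Hom(\CA_i,\CA_j) = 0$ for the reordered indices $i$ before $j$ holds because the only newly created pair is $(\CA_{k+1},\CA_k)$, and $\Hom(\CA_{k+1},\CA_k) = 0$ is precisely the second orthogonality hypothesis, while all other pairs are unchanged; and the existence of the filtration of an arbitrary object $T \in \CT$ follows by combining the filtration associated to the original decomposition with the triangle expressing that the $\CA_k$- and $\CA_{k+1}$-pieces of $T$ can be reassembled in the opposite order, which is exactly the statement that $\LL_{\CA_k}$ carries the $(k{+}1)$-st component onto a component orthogonal to $\CA_k$ on the left. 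I do not expect a serious obstacle here: the content is entirely formal once the adjoints $i_1^!$ and $i_2^*$ are seen to vanish on $\CA_2$ and $\CA_1$ respectively, and that vanishing is an immediate consequence of complete orthogonality. The only point requiring a little care is the bookkeeping in the reduction to $n=2$ — making sure the mutation functor $\LL_{\CA_k}$ on $\CT$ restricts to $\LL_{\CA_k}$ on $\CB$ — which follows because $\LL_{\CA_k}$ is the identity on the other components and preserves $\CB$.
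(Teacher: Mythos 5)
Your argument is correct, and in fact the paper states Lemma~\ref{perpmut} without giving any proof at all, so there is nothing to compare it against; the author evidently regards it as a standard, formal consequence of the definitions. Your route — computing $i_1^!F=0$ for $F\in\CA_2$ and $i_2^*F=0$ for $F\in\CA_1$ via the adjunction isomorphisms and the complete-orthogonality hypothesis, then reading off $\LL_{\CA_1}(F)=F$ and $\RR_{\CA_2}(F)=F$ from the defining triangles in the proof of Lemma~\ref{mutfun} — is exactly the expected one, and the reduction to $n=2$ via Lemma~\ref{sos_sod}$(i)$ is a sensible way to organize the bookkeeping. One small inaccuracy in your parenthetical: both vanishing computations use the \emph{same} orthogonality condition, namely $\Hom(\CA_k,\CA_{k+1})=0$ (in your $n=2$ reduction, $\Hom(\CA_1,\CA_2)=0$), which is also what makes the permuted sequence semiorthogonal; the other condition $\Hom(\CA_{k+1},\CA_k)=0$ is just the original semiorthogonality and plays no new role. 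This does not affect the validity of the proof.
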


\begin{lemma}\label{longmut}
Let $X$ be a smooth projective algebraic variety and $\D^b(X) = \lan \CA,\CB \ran$ a semiorthogonal decomposition.
Then
$$
\LL_{\CA}(\CB) = \CB\otimes\omega_X
\qquad\text{and}\qquad
\RR_{\CB}(\CA) = \CA\otimes\omega_X^{-1}.
$$
\end{lemma}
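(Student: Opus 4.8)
The plan is to rephrase both assertions as identities between one-sided orthogonal subcategories and then read them off from Serre duality. Recall first that, since $\D^b(X) = \lan \CA,\CB \ran$, both components are admissible by Lemma~\ref{sos_sod}(ii), and the standard properties of a two-term semiorthogonal decomposition give $\CB = {}^\perp\CA$ and $\CA = \CB^\perp$ (compare $\D^b(X) = \lan\CA,\CB\ran$ with the decompositions $\lan\CA,{}^\perp\CA\ran$ and $\lan\CB^\perp,\CB\ran$ furnished by Lemma~\ref{mutfun}). By Lemma~\ref{mutfun} the functor $\LL_\CA$ restricts to an equivalence of ${}^\perp\CA$ with $\CA^\perp$, and $\RR_\CB$ restricts to an equivalence of $\CB^\perp$ with ${}^\perp\CB$; hence, as subcategories of $\D^b(X)$, $\LL_\CA(\CB) = \CA^\perp$ and $\RR_\CB(\CA) = {}^\perp\CB$. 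It therefore suffices to prove the two equalities of subcategories $\CA^\perp = \CB\otimes\omega_X$ and ${}^\perp\CB = \CA\otimes\omega_X^{-1}$.

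For the first, I would use that $\D^b(X)$ is $\Hom$-finite with Serre functor $G\mapsto G\otimes\omega_X[\dim X]$ ($X$ being smooth and projective), so that Serre duality holds in the form $\Hom(G,H) \cong \Hom\big(H\otimes\omega_X^{-1}[-\dim X],\,G\big)^*$ for all $G,H\in\D^b(X)$. Taking $G = A$ running over $\CA$ and $H = F[i]$ with $i\in\ZZ$, the defining condition $F\in\CA^\perp$, namely $\Hom(A,F[i]) = 0$ for all $A\in\CA$ and all $i$, becomes equivalent to $\Hom\big((F\otimes\omega_X^{-1})[i-\dim X],A\big) = 0$ for all $A$ and $i$, i.e. to $F\otimes\omega_X^{-1}\in {}^\perp\CA = \CB$, i.e. to $F\in\CB\otimes\omega_X$; the overall shift by $\dim X$ is irrelevant because $i$ ranges over all of $\ZZ$, and $\otimes\omega_X^{\pm1}$ is an autoequivalence of $\D^b(X)$ commuting with shifts. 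This gives $\CA^\perp = \CB\otimes\omega_X$. The second equality is proved identically after rewriting Serre duality as $\Hom(F[i],B) \cong \Hom\big(B,(F\otimes\omega_X)[\dim X + i]\big)^*$: the condition $F\in{}^\perp\CB$ then becomes $F\otimes\omega_X\in\CB^\perp = \CA$, that is, $F\in\CA\otimes\omega_X^{-1}$, so ${}^\perp\CB = \CA\otimes\omega_X^{-1}$.

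Since the argument is essentially a single invocation of Serre duality, I do not expect a genuine obstacle; the only things requiring care are the bookkeeping — which of $\CA^\perp$, ${}^\perp\CA$ a given mutation functor lands in, and on which side of $\Hom$ the canonical twist appears — together with the routine observation that tensoring by the line bundle $\omega_X^{\pm1}$ sends an admissible triangulated subcategory to an admissible triangulated subcategory, so that $\CB\otimes\omega_X$ and $\CA\otimes\omega_X^{-1}$ are indeed the components of the mutated decompositions $\D^b(X) = \lan \CB\otimes\omega_X,\CA\ran$ and $\D^b(X) = \lan \CB,\CA\otimes\omega_X^{-1}\ran$ predicted by the mutation corollary.
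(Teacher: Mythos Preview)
Your argument is correct: reducing to the identities $\CA^\perp = \CB\otimes\omega_X$ and ${}^\perp\CB = \CA\otimes\omega_X^{-1}$ and reading these off from Serre duality is exactly the standard proof. The paper does not actually supply a proof of this lemma --- it states the result and, in the sentence that follows, points to~\cite{BK} for the general Serre-functor version --- so your write-up is in line with the intended argument and with the paper's own hint that the twist by $\omega_X$ is the shadow of the Serre functor.
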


An analogue of this Lemma holds for any triangulated category which has a Serre functor (see~\cite{BK}).
In this case tensoring by the canonical class should be replaced by the action of the Serre functor.

We will also need the following evident oservation.

\begin{lemma}\label{tensmut}
Let $\Phi$ be an autoequivalence of $\CT$. Then
$$
\Phi\circ\RR_\CA \cong \RR_{\Phi(\CA)}\circ\Phi,
\qquad
\Phi\circ\LL_\CA \cong \LL_{\Phi(\CA)}\circ\Phi.
$$
In particular, if $L$ is a line bundle on $X$
and $E$ is an exceptional object in $\D^b(X)$ then
$$
\TT_L\circ\RR_E \cong \RR_{E\otimes L}\circ\TT_L,
\qquad
\TT_L\circ\LL_E \cong \LL_{E\otimes L}\circ\TT_L,
$$
where $\TT_L:\D^b(X) \to \D^b(X)$ is the functor of tensor product by $L$.
\end{lemma}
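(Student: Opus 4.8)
The plan is to reduce everything to the explicit cone description of the mutation functors given in the proof of Lemma~\ref{mutfun}, together with the (standard) fact that adjunctions are preserved under conjugation by an equivalence.

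First I would note that $\Phi$ carries the two semiorthogonal decompositions $\CT = \lan \CA^\perp,\CA\ran$ and $\CT = \lan\CA,{}^\perp\CA\ran$ to $\CT = \lan\Phi(\CA)^\perp,\Phi(\CA)\ran$ and $\CT = \lan\Phi(\CA),{}^\perp\Phi(\CA)\ran$: since $\Phi$ is an equivalence it preserves all $\Hom$-spaces, hence semiorthogonality, and obviously $\Phi(\CA^\perp) = \Phi(\CA)^\perp$ and $\Phi({}^\perp\CA) = {}^\perp\Phi(\CA)$. In particular $\Phi(\CA)$ is admissible, so the functors $\RR_{\Phi(\CA)}$, $\LL_{\Phi(\CA)}$ are defined.

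Next, writing $i:\CA\to\CT$ and $j:\Phi(\CA)\to\CT$ for the inclusion functors, $\Phi$ restricts to an equivalence $\CA\to\Phi(\CA)$ and $j\cong\Phi\circ i\circ\Phi^{-1}$ as functors $\Phi(\CA)\to\CT$. Since composing an adjoint pair with equivalences on either side again gives an adjoint pair, the adjunctions $(i,i^!)$ and $(i^*,i)$ yield $j^!\cong\Phi\circ i^!\circ\Phi^{-1}$ and $j^*\cong\Phi\circ i^*\circ\Phi^{-1}$, and hence $jj^!\cong\Phi\,ii^!\,\Phi^{-1}$ and $jj^*\cong\Phi\,ii^*\,\Phi^{-1}$ as endofunctors of $\CT$. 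Now I would apply the exact functor $\Phi$ to the functorial triangles $ii^!F\to F\to\LL_\CA(F)$ and $\RR_\CA(F)\to F\to ii^*F$ and substitute $F=\Phi^{-1}G$; using the identities just obtained these become $jj^!G\to G\to\Phi\LL_\CA\Phi^{-1}(G)$ and $\Phi\RR_\CA\Phi^{-1}(G)\to G\to jj^*G$. Comparing with the defining triangles of $\LL_{\Phi(\CA)}$ and $\RR_{\Phi(\CA)}$ and using the uniqueness of the cone guaranteed by semiorthogonality (exactly as in Lemma~\ref{mutfun}), one obtains $\Phi\circ\LL_\CA\cong\LL_{\Phi(\CA)}\circ\Phi$ and $\Phi\circ\RR_\CA\cong\RR_{\Phi(\CA)}\circ\Phi$, which are the two displayed isomorphisms.

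Finally, the ``in particular'' part follows by specializing to $\CT=\D^b(X)$, $\Phi=\TT_L$ (an autoequivalence, with quasi-inverse $\TT_{L^{-1}}$) and $\CA=\lan E\ran$: since $\TT_L$ is an equivalence sending the exceptional object $E$ to the exceptional object $E\otimes L$, it sends $\lan E\ran$ to $\lan E\otimes L\ran$, and the general formulas become the asserted ones. I do not expect any real obstacle here; the only point needing a line of care is the compatibility of adjoints with conjugation by an equivalence, which is a routine $2$-categorical fact and, in the exceptional-object case, can also be checked directly from the mutation triangles~\eqref{excmut}.
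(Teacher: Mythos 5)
The paper gives no proof of this lemma at all; it is simply introduced with ``We will also need the following evident observation.'' Your proof is a correct and complete filling-in of what the author declares evident: you use the explicit cone descriptions $\LL_\CA(F)=\Cone(ii^!F\to F)$ and $\RR_\CA(F)=\Cone(F\to ii^*F)[-1]$ from Lemma~\ref{mutfun}, together with the standard fact that adjoints transport along conjugation by an equivalence, giving $jj^!\cong\Phi\,ii^!\,\Phi^{-1}$ and $jj^*\cong\Phi\,ii^*\,\Phi^{-1}$; applying $\Phi$ to the defining triangles and invoking the uniqueness of the complement furnished by semiorthogonality then closes the argument. The one point you elide --- that under the identification $jj^!\cong\Phi\,ii^!\,\Phi^{-1}$ the image of the counit of $(i,i^!)$ matches the counit of $(j,j^!)$, so that the two triangles you compare really do have the same first arrow --- is a routine compatibility check, and your observation that $\TT_L(\lan E\ran)=\lan E\otimes L\ran$ correctly reduces the ``in particular'' statement to the general one.
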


\section{Pfaffian cubic fourfolds}\label{secpf}

Let $W$ be a vector space of dimension $6$. Consider $\PP(\Lambda^2W^*)$,
the space of skew-forms on $W$, and its closed subset
$$
\Pf(W) = \{ \omega \in \PP(\Lambda^2W^*)\ |\ \text{$\omega$ is degenerate} \}.
$$
It is well known that a skew form is degenerate if and only if its Pfaffian is zero,
so $\Pf(W)$ is a hypersurface in $\PP(\Lambda^2W^*)$ and its equation is given
by the Pfaffian in homogeneous coordinates of $\PP(\Lambda^2W^*)$. In particular,
$$
\deg \Pf(W) = \frac12\dim W = 3.
$$
We will say that $\Pf(W)$ is the {\sf Pfaffian cubic hypersurface}.

It is easy to check that $\Pf(W)$ is singular, its singularity is
the image of the Grassmanian $\Gr(2,W^*)$ under the Pl\"ucker embedding.
So,
$$
\codim_{\PP(\Lambda^2W^*)} \Big(\sing(\Pf(W))\Big) = \dim\PP(\Lambda^2W^*) - \dim \Gr(2,W^*) = 14 - 8 = 6.
$$
Let now $V$ be a 6-dimensional subspace of $\Lambda^2W^*$ such that $\PP(V) \not\subset \Pf(W^*)$.
Then $Y_V = \PP(V) \cap \Pf(W^*)$ is a cubic hypersurface in $\PP(V)$,
its equation being the restriction of the Pfaffian in coordinates of $\PP(\Lambda^2W^*)$ to~$\PP(V)$.
We will say that $Y_V$ is {\sf the Pfaffian cubic fourfold} associated with the subspace $V\subset \Lambda^2W^*$.
Note that thanks to the big codimension of $\sing (\Pf(W^*))$, the Pfaffian cubic fourfold
$Y_V$ is smooth for sufficiently generic $V$.

With each $V \subset \Lambda^2W^*$ we can also associate a global section $s_V$
of the vector bundle $V^*\otimes \CO_{\Gr(2,W)}(1)$ on the Grassmanian $\Gr(2,W)$
and its zero locus $X_V \subset \Gr(2,W)$.

The following was proved in~\cite{K3}.

\begin{theorem}
Let $Y_V$ be a smooth Pfaffian cubic fourfold. Then $X_V$ is a smooth K3 surface and there is
an equivalence of categories $\CA_{Y_V} \cong \D^b(X_V)$.
\end{theorem}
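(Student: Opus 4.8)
The plan is to exhibit both $\CA_{Y_V}$ and $\D^b(X_V)$ as pieces of a common ambient derived category. The natural candidate is a variety carrying simultaneously the Pfaffian cubic and the Grassmannian picture — namely, one should consider the universal hyperplane section or, more precisely, the correspondence built from the tautological data on $\Gr(2,W)\times\PP(V)$. Concretely, I would introduce the incidence/resolution $\widetilde{\Pf}$ of the Pfaffian cubic inside $\PP(\Lambda^2 W^*)$ via the Grassmannian $\Gr(2,W)$ (the locus of pairs $(\omega, K)$ where $K\subset W$ is in the kernel of $\omega$), restrict everything to $\PP(V)$, and analyse the derived category of the resulting variety $Z_V$, which fibers over $Y_V$ (generically a resolution of singularities — but $Y_V$ is already smooth — so one must understand the fibers carefully, they jump over the locus where the kernel is bigger than $2$-dimensional) and over $X_V$ (with fibers projective spaces coming from the linear-algebra description of the section $s_V$).

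The key steps, in order: (1) set up the variety $Z_V\subset\Gr(2,W)\times\PP(V)$ with its two projections $p:Z_V\to X_V$-adjacent space and $q:Z_V\to Y_V$, identifying each projection's fibers via the linear algebra of skew forms and their kernels. (2) Use the projective-bundle / Grassmannian-bundle theorems together with Orlov's blow-up and projective-bundle formulas to produce semiorthogonal decompositions of $\D^b(Z_V)$ adapted to each of the two projections: pushing forward along $q$ one gets $\D^b(Y_V)$ plus several "geometric" exceptional-type components built from $\CO(i)$'s and the tautological bundles; pushing forward along $p$ one gets $\D^b(X_V)$ plus analogous components. (3) Mutate. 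Using the mutation machinery of Section~\ref{secpre} (Lemmas~\ref{mutfun}, \ref{perpmut}, \ref{longmut}, \ref{tensmut}), transform one decomposition into the other, matching the "junk" components on both sides and thereby forcing an equivalence between the leftover pieces $\CA_{Y_V}$ (the orthogonal to $\langle\CO,\CO(1),\CO(2)\rangle$ in $\D^b(Y_V)$) and $\D^b(X_V)$ (which survives because $X_V$ is a K3 surface with trivial canonical class, so $\D^b(X_V)$ has no nontrivial semiorthogonal decomposition of the relevant exceptional type and must appear "whole"). (4) Along the way, verify $X_V$ is a smooth K3 surface: it is the zero locus of $s_V$, a regular section of $V^*\otimes\CO_{\Gr(2,W)}(1)$ of rank $6$ on the $8$-dimensional $\Gr(2,W)$, so by the adjunction/Koszul computation $\dim X_V=2$, $\omega_{X_V}=\CO$, and $H^1(\CO)=0$; smoothness follows from genericity/Bertini applied under the same codimension estimate that makes $Y_V$ smooth, together with a dimension count on the bad locus.

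The main obstacle is step (3): organizing the mutations so that the many auxiliary components produced by the two projective/Grassmannian bundle structures cancel exactly. This requires a careful bookkeeping of which twists $\CO(i)$ and which tautological sheaves appear, in what order, and how the mutation functors move them past one another; the Serre-functor/canonical-twist mutation (Lemma~\ref{longmut}) is what ultimately "wraps around" and closes the braid so that nothing is left over except the two K3-like pieces. A secondary but genuine difficulty is handling the failure of $Z_V\to Y_V$ to be everywhere smooth: over the points of $Y_V$ where the skew form has a $4$-dimensional kernel the fiber is a larger Grassmannian, and one must check that this does not spoil the semiorthogonal decomposition — this is where the hypothesis that $Y_V$ is smooth (equivalently $\PP(V)$ meets $\sing\Pf(W^*)=\Gr(2,W^*)$ in the expected, empty-or-controlled way, given $\codim=6$) is used in an essential way.

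\medskip

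\noindent\emph{Remark.} This is the strategy carried out in~\cite{K3}; a self-contained account would also invoke the general "homological projective duality" formalism, under which $Y_V$ and $X_V$ are dual linear sections of the Pfaffian variety and its Grassmannian dual, and the equivalence $\CA_{Y_V}\cong\D^b(X_V)$ is the statement that their nontrivial derived-categorical components agree.
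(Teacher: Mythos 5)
The paper itself gives no proof of this theorem: it is stated as a result ``proved in~\cite{K3}'' (homological projective duality for Grassmannians of lines), and no argument is reproduced. So there is nothing in the text to compare against line by line; your proposal is being judged only against the strategy of~\cite{K3}, which you correctly identify in your closing remark.

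At that level your plan is broadly right --- the equivalence $\CA_{Y_V}\cong\D^b(X_V)$ is an instance of homological projective duality between $\Gr(2,W)\subset\PP(\Lambda^2W)$ (with its Lefschetz decomposition) and a noncommutative resolution of $\Pf(W^*)\subset\PP(\Lambda^2W^*)$, specialized to the admissible linear section given by $V$, and the K3 claim for $X_V$ follows from adjunction and the Koszul resolution exactly as you say. But the concrete incidence variety you write down does not do what you want. If $Z_V=\{(\omega,K)\in\PP(V)\times\Gr(2,W):K\subset\ker\omega\}$ is the restriction of the kernel resolution of the Pfaffian, then for $Y_V$ smooth the plane $\PP(V)$ misses $\sing\Pf(W^*)=\Gr(2,W^*)$, every $\omega\in Y_V$ has exactly two-dimensional kernel, and the projection $Z_V\to Y_V$ is an isomorphism --- so $Z_V$ carries no extra information. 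Moreover the image of $Z_V$ in $\Gr(2,W)$ is $\{K:V\cap\Lambda^2(W/K)^*\neq 0\}$, which is \emph{not} $X_V$: $X_V$ is cut out by the isotropy condition $\omega|_K=0$ for all $\omega\in V$, a single linear condition per $\omega$, whereas $K\subset\ker\omega$ is a much stronger ($8$-dimensional) condition. The correct correspondence in the HPD machinery is the universal hyperplane section $\mathcal{H}_V=\{(K,[\omega])\in\Gr(2,W)\times\PP(V):\omega|_K=0\}$ (which you mention in passing but then abandon), and even with that in hand the argument requires the full Lefschetz-decomposition and noncommutative-resolution formalism of~\cite{K3}, not just the blow-up and projective-bundle formulas plus mutations as in Sections~\ref{secplane} and~\ref{secsing}. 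As a sketch of the external reference your outline is serviceable; as a self-contained proof it has a gap precisely at the construction of the ambient category linking $Y_V$ and $X_V$.
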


\begin{remark}
The same holds true even for singular Pfaffian cubic fourfolds as long as
$X_V$ is a  surface.
\end{remark}

\section{Cubic fourfolds with a plane}\label{secplane}

Let $Y \subset \PP(V)$ be a cubic fourfold containing a plane. In other words we assume
that there is a vector subspace $A \subset V$ of dimension 3 such that $\PP(A) \subset Y$.
Let $\sigma:\TY \to Y$ be the blowup of~$\PP(A)$. Let also $B = V/A$ be the quotient space.
Then the linear projection from $\PP(A)$ gives a regular map $\pi:\TY \to \PP(B) = \PP^2$.

\begin{lemma}
The map $\pi$ is a fibration in two-dimensional quadrics over $\PP(B)$ with the degeneration
curve of degree $6$. Let $D$ be the exceptional divisor of $\sigma$.
Let $i:D \to \TY$ be the embedding and $p:D \to \PP(A)$ the projection.
$$
\xymatrix{
& D \ar[r]^i \ar[dl]_p & \TY \ar[dl]_\sigma \ar[dr]^\pi \\ 
\PP(A) \ar[r] & Y && \PP(B) 
}
$$
Let $H$ and $h$ denote the pullbacks to $\TY$ of the classes of hyperplanes in $\PP(V)$ and $\PP(B)$ respectively.
Then we have linear equivalences
$$
D = H - h,\qquad
h = H - D,\qquad
K_{\TY} = -3H + D = -2H -h.
$$
\end{lemma}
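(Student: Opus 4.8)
The plan is to realise $\pi$ by resolving the linear projection $\PP(V)\dashrightarrow\PP(B)$ in the ambient space and restricting to $Y$. Let $\widetilde{\PP(V)}$ be the blow-up of $\PP(V)$ along $\PP(A)$, with exceptional divisor $E$. Since $\PP(A)$ has codimension $3$ in $\PP(V)$, the projection from $\PP(A)$ lifts to a morphism $q\colon\widetilde{\PP(V)}\to\PP(B)$ which is a $\PP^3$-bundle: explicitly, if $\CE$ is the preimage of the tautological subbundle $\CO_{\PP(B)}(-1)\subset B\otimes\CO_{\PP(B)}$ under $V\otimes\CO_{\PP(B)}\to B\otimes\CO_{\PP(B)}$, so that $0\to A\otimes\CO_{\PP(B)}\to\CE\to\CO_{\PP(B)}(-1)\to 0$ and $\det\CE=\CO_{\PP(B)}(-1)$, then $\widetilde{\PP(V)}=\PP_{\PP(B)}(\CE)$, the class $H$ is the relative hyperplane class $\CO(1)$, and $E=\PP_{\PP(B)}(A\otimes\CO_{\PP(B)})\cong\PP(A)\times\PP(B)$ with $q|_E$ the second projection. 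Because $Y$ is smooth and contains $\PP(A)$ with multiplicity one, its proper transform is the blow-up $\TY=\mathrm{Bl}_{\PP(A)}Y$ (with $\sigma$ the projection and $D=E\cap\TY$ the exceptional divisor), and as a divisor in $\widetilde{\PP(V)}$ one has $\TY\sim 3H-E$.

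Next I read off the divisor relations. A hyperplane $\Pi\subset\PP(V)$ containing $\PP(A)$ has proper transform of class $H-E$, and this proper transform is exactly the $q$-preimage of the corresponding hyperplane of $\PP(B)$ (identifying hyperplanes through $\PP(A)$ with hyperplanes of $B=V/A$); hence $h=H-E$ on $\widetilde{\PP(V)}$. Restricting to $\TY$ gives $D=H-h$, equivalently $h=H-D$, the first two asserted equivalences. For the canonical class, apply the blow-up formula to $\sigma\colon\TY\to Y$: since $\PP(A)\cong\PP^2$ has codimension $2$ in the smooth fourfold $Y$ and $K_Y=-3H$ by adjunction (as recalled in the introduction), $K_{\TY}=\sigma^*K_Y+(2-1)D=-3H+D$; substituting $D=H-h$ yields $K_{\TY}=-2H-h$. (Alternatively, compute $K_{\widetilde{\PP(V)}}=-6H+2E$ and restrict via adjunction on the divisor $\TY\sim 3H-E$.)

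It remains to identify $\pi$ with a quadric surface fibration and compute the degeneration degree. Let $F\cong\PP^3$ be a fibre of $q$. Then $h|_F=0$, while $E\cap F=\PP(A)\times\{[b]\}$ is a hyperplane in $F$, so $\CO_F(E)=\CO_{\PP^3}(1)$ and hence $\CO_F(H)=\CO_F(h)\otimes\CO_F(E)=\CO_{\PP^3}(1)$; therefore $\CO_F(\TY)=\CO_F(3H-E)=\CO_{\PP^3}(2)$, i.e. $\pi^{-1}([b])=\TY\cap F$ is a quadric surface in $\PP^3$. No fibre of $q$ lies in $\TY$, since otherwise $Y$ would contain a linear $\PP^3$, impossible for a smooth cubic fourfold; thus $\pi$ is flat with two-dimensional quadric fibres. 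Finally, $\TY$ is cut out in $\widetilde{\PP(V)}$ by a section $s$ of $\CO(3H-E)=\CO(2H+h)=\CO(2H)\otimes q^*\CO_{\PP(B)}(1)$, whose pushforward is a section of $\mathsf{Sym}^2\CE^\vee\otimes\CO_{\PP(B)}(1)$, i.e. a symmetric morphism $\varphi\colon\CE\to\CE^\vee\otimes\CO_{\PP(B)}(1)$; the fibre quadric over $[b]$ degenerates precisely where $\det\varphi=0$. Since $\det\varphi$ is a section of $(\det\CE^\vee)^{\otimes 2}\otimes\CO_{\PP(B)}(1)^{\otimes 4}=\CO_{\PP(B)}(2)\otimes\CO_{\PP(B)}(4)=\CO_{\PP(B)}(6)$, the degeneration curve has degree $6$.

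The main obstacle is bookkeeping rather than ideas: fixing the bundle $\CE$ and the $\PP_{\PP(B)}(\CE)$-convention so that $H$ is the relative hyperplane class and $q_*\CO(2H)=\mathsf{Sym}^2\CE^\vee$, and verifying the multiplicity-one statement that gives $\TY\sim 3H-E$. The only genuinely geometric inputs beyond the blow-up formula, adjunction and restriction to a fibre are that a smooth cubic fourfold contains no $\PP^3$ (so $\pi$ has pure two-dimensional fibres) and that $\det\varphi$ is not identically zero (so the degeneration locus is really a curve); both are standard.
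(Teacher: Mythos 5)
Your proof follows essentially the same route as the paper's: pass to the blow-up $\widetilde{\PP(V)}\cong\PP_{\PP(B)}(E)$ with $E\cong A\otimes\CO_{\PP(B)}\oplus\CO_{\PP(B)}(-1)$ (your extension splits since $H^1(\PP^2,\CO(1))=0$), identify the proper transform as $3H-E$, read off $h=H-E$ and restrict, use the blow-up formula for $K_{\TY}$, and compute the degeneration locus as the determinant of the induced symmetric map $E\to E^\vee\otimes\CO_{\PP(B)}(1)$, a section of $\CO_{\PP(B)}(6)$. The only (minor) additions over the paper are the explicit check that no fibre of $q$ lies in $\TY$ (since a smooth cubic fourfold contains no $\PP^3$) and the fibre-by-fibre restriction argument; both are correct and simply spell out what the paper leaves implicit.
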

\begin{proof}
Consider the blowup $\widetilde{\PP(V)}$ of $\PP(V)$ in $\PP(A)$. It is naturally projected to $\PP(B)$
and this projection identifies $\widetilde{\PP(V)}$ with $\PP_{\PP(B)}(E)$ for a vector bundle
$$
E = A\otimes\CO_{\PP(B)} \oplus \CO_{\PP(B)}(-1)
$$
on $\PP(B)$. The complete preimage of $Y$ under the map $\widetilde{\PP(V)} \to \PP(V)$ decomposes as the union of the proper preimage and the
exceptional divisor. It follows that $\TY = 3H' - D'$, where $H'$ stands for the pullback to $\widetilde{\PP(V)}$
of the class of a hyperplane in $\PP(V)$ and $D'$ stands for the exceptional divisor of $\widetilde{\PP(V)}$.
Note also that we have $h' = H' - D'$ on $\widetilde{\PP(V)}$, where $h'$ stands for the pullback to $\widetilde{\PP(V)}$
of the class of a hyperplane in $\PP(B)$. Thus $\TY = 2H' + h'$, so it follows that the fibers of $\TY$ over $\PP(B)$
are the quadrics in the fibers of $\widetilde{\PP(V)} = \PP_{\PP(B)}(E)$.
Moreover, it follows that the degeneration curve is the vanishing locus of the determinant of the morphism
$$
E \to E^*\otimes\CO_{\PP(B)}(1),
$$
given by the equation of $\TY$. So, the degeneration curve is the zero locus of a section of the line bundle $(\det E^*)^2\otimes\CO_{\PP(B)}(4) \cong \CO_{\PP(B)}(6)$.

Restricting the relation $h' = H' - D'$ to $\TY$ we deduce $h = H - D$. As for the canonical clas,
the first equality is evident since $\TY$ is a smooth blow-up, and replacing $D$ by $H - h$ we get the last equality.
\end{proof}

Now we will need the results of~\cite{K1} on the structure of the derived
category of coherent sheaves on a fibration in quadrics. According to loc.\ cit.
there is a semiorthogonal decomposition
\begin{equation}\label{pcso0}
\D^b(\TY) = \langle \Phi(\D^b(\PP(B),\CB_0)), \pi^*(\D^b(\PP(B)))\otimes\CO_\TY, \pi^*(\D^b(\PP(B)))\otimes\CO_\TY(H) \rangle,
\end{equation}
where $\CB_0$ is the sheaf of even parts of Clifford algebras corresponding to this fibration.
As a coherent sheaf on $\PP(B)$ it is given by the formula
$$
\CB_0 = \CO_{\PP(B)} \quad\oplus\quad \Lambda^2E\otimes\CO_{\PP(B)}(-1) \quad\oplus\quad \Lambda^4E\otimes\CO_{\PP(B)}(-2)
$$
and the multiplication law depends on the equation of $\TY$. Further, $\D^b(\PP(B),\CB_0)$ is the bounded
derived category of sheaves of coherent right $\CB_0$-modules on $\PP(B)$,
and the functor $\Phi:\D^b(\PP(B),\CB_0) \to \D^b(\TY)$
is defined as follows. Consider also the sheaf of odd parts of the corresponding Clifford algebras,
$$
\CB_1 = E \quad\oplus\quad \Lambda^3E\otimes\CO_{\PP(B)}(-1).
$$
Denote by $\alpha$ the embedding $\TY \to \PP_{\PP(B)}(E)$,
and by $q:\PP_{\PP(B)}(E) \to \PP(B)$ the projection.
Then there is a canonical map of left $q^*\CB_0$-modules $q^*\CB_0 \to q^*\CB_1(H')$,
which is injective and its cokernel is supported on~$\TY$. So, twisting by $\CO(-2H')$ for later convenience
we obtain an exact sequence
$$
0 \to q^*\CB_0(-2H') \to q^*\CB_1(-H') \to \alpha_*\CE \to 0,
$$
where $\CE$ is a sheaf of left $\pi^*\CB_0$-modules on $\TY$.
The functor $\Phi$ is just the kernel functor given by $\CE$, that is
$$
\Phi(F) = \pi^*F \otimes_{\pi^*\CB_0} \CE.
$$

At first glance, the category $\D^b(\PP(B),\CB_0)$ has nothing to do with K3 surfaces.
However, after a small consideration it is easy to see that it has.

From now on we assume that the fibers of $\TY$ over $\PP(B)$ are quadrics of rank $\ge 3$
(i.e.\ the don't degenerate into a union of two planes). This condition holds if $Y$
is sufficiently generic.
Consider the moduli space $\CM$ of lines contained in the fibers of $\TY$ over $\PP(B)$.
Since on a smooth two-dimensional quadric the moduli space of lines is a disjoint union of two $\PP^1$,
while on a singular quadric the moduli space of lines is one $\PP^1$, it follows that
$\CM$ is a $\PP^1$-fibration over the double cover $S$ of $\PP(B)$ ramified in the degeneration curve.
Note that $S$ is a K3 surface, and $\CM$ produces an element of order $2$ in the Brauer group of $S$.
Let $\CB$ be the corresponding sheaf of Azumaya algebras on $S$.
Denote by $f:S \to \PP(B)$ the double covering.

\begin{lemma}\label{ppb-s}
We have $f_*\CB \cong \CB_0$. In particular, $\D^b(S,\CB) \cong \D^b(\PP(B),\CB_0)$. The composition of the
equivalence with the functor $\Phi$ is given by
$$
F \mapsto \pi^*f_* F \otimes_{\pi^*\CB_0} \CE.
$$
\end{lemma}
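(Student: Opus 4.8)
The plan is to verify the isomorphism $f_*\CB \cong \CB_0$ directly from the explicit description of the Clifford algebra and the geometry of the double cover $S \to \PP(B)$, and then to transport the kernel functor along this identification. First I would recall that $S$ is defined inside the total space of the line bundle $\CO_{\PP(B)}(3)$ (the square root of $\CO_{\PP(B)}(6)$, the line bundle of the degeneration curve) as the zero locus of $y^2 = \det(E \to E^*\otimes\CO(1))$, so that $f_*\CO_S \cong \CO_{\PP(B)} \oplus \CO_{\PP(B)}(-3)$ as $\CO_{\PP(B)}$-modules. The sheaf of Azumaya algebras $\CB$ is the endomorphism algebra of the tautological rank-$2$ bundle on $\CM$ twisted by the $\PP^1$-fibration $\CM \to S$; equivalently, on the smooth locus it is the even Clifford algebra of the (rank-$\ge 3$) quadric fibration pulled back to $S$, where the pulled-back quadratic form acquires an isotropic... no: rather, I would use that on $S$ the quadric has been "split" in the sense that the discriminant becomes a square, and the even Clifford algebra of a quadric form whose discriminant is trivialized is a matrix algebra over the Azumaya algebra $\CB$ coming from a half-spinor construction. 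The cleanest route is to compare $f_*\CB$ and $\CB_0$ as $\CO_{\PP(B)}$-algebras: both are locally free of rank $8$ (since $\CB$ has rank $4$ over $S$ and $f$ has degree $2$, while $\CB_0 = \CO \oplus \Lambda^2E(-1) \oplus \Lambda^4E(-2)$ has rank $1 + 6 + 1 = 8$), and one checks that the center of $\CB_0$ is exactly $\CO_{\PP(B)} \oplus \Lambda^4E(-2) \cong \CO_{\PP(B)} \oplus \CO_{\PP(B)}(-3) \cong f_*\CO_S$, using $\det E = \CO_{\PP(B)}(-1)$ so $\Lambda^4E = \CO_{\PP(B)}(-2)$... wait, $\Lambda^4 E \otimes \CO(-2)$: since $\mathrm{rk}\, E = 4$ and $\det E = \CO(-1)$, one gets $\Lambda^4 E \otimes \CO(-2) = \CO(-1)\otimes\CO(-2) = \CO(-3)$, consistent with the square-root description. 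So $\CB_0$ is already an algebra over $f_*\CO_S$, hence descends to a sheaf of algebras $\CB'$ on $S$ with $f_*\CB' = \CB_0$, and it remains to identify $\CB'$ with $\CB$.

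The key step is therefore the identification $\CB' \cong \CB$ on $S$. I would do this by a moduli-theoretic argument: the sheaf of right $\CB_0$-modules corresponding under $\Phi$ to structure sheaves of points of $S$ (or rather, the images under $\Phi$ of the simple $\CB$-modules) are precisely the structure sheaves of the lines parametrized by $\CM$, pushed forward to $\TY$. More concretely, the relative Clifford module $\CE$ on $\TY$, restricted to a fiber quadric $Q_b$ over a point $b \in \PP(B)$, is the spinor bundle, and the two (or one) families of lines on $Q_b$ correspond exactly to the two (or one) summands of the spinor module, i.e.\ to the two points of $S$ over $b$. This is exactly the content of the classical correspondence between the even Clifford algebra of a rank-$4$ quadratic form and the two rulings; making it relative over $\PP(B)$ and globalizing gives $\D^b(S,\CB') \cong \D^b(S,\CB)$ compatibly with the forgetful functors to $\D^b(\PP(B),\CB_0)$, hence $\CB' \cong \CB$ as Azumaya algebras (a Morita equivalence between Azumaya algebras over the same K3 surface preserving the center identification is an isomorphism up to twist by a line bundle, and one checks the twist is trivial by looking at ranks or at a single fiber).

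Once $f_*\CB \cong \CB_0$ is established, the equivalence $\D^b(S,\CB)\cong\D^b(\PP(B),\CB_0)$ is formal: for $f$ finite flat and $\CB$ an $f_*\CO_S = $ (center)-algebra, the category of right $\CB$-modules on $S$ is equivalent to the category of right $f_*\CB$-modules on $\PP(B)$ via $f_*$ (this is the affine-morphism version of descent, and $f_*$ is exact since $f$ is finite). Under this equivalence the composite functor $\D^b(S,\CB)\to\D^b(\PP(B),\CB_0)\xrightarrow{\Phi}\D^b(\TY)$ sends $F$ to $\Phi(f_*F) = \pi^*(f_*F)\otimes_{\pi^*\CB_0}\CE$, which is precisely the asserted formula. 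The main obstacle I anticipate is the identification $\CB' \cong \CB$: one must carefully match the abstractly-descended algebra $\CB_0/f_*\CO_S$ with the geometrically-defined Brauer class of the $\PP^1$-fibration $\CM \to S$, and in particular check that no spurious line-bundle twist intervenes — this requires the fiberwise spinor/ruling dictionary together with a global comparison (e.g.\ via Chern classes or via the explicit local form of $\CB_0$ over an open where $E$ trivializes). The rank counts, the center computation $\Lambda^4E\otimes\CO(-2)\cong\CO(-3)$, and the exactness of $f_*$ are all routine and I would not belabor them.
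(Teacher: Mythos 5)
Your proposal is correct and rests on the same classical Clifford-theoretic fact that the paper uses: for a rank-$4$ quadric fibration, the even Clifford algebra has center the discriminant double-cover algebra, and its descent to the double cover is the Azumaya algebra of the two rulings (equivalently, the endomorphism algebra of the half-spinor module). You organize this as a descent argument --- both $\CB_0$ and $f_*\CB$ have rank $8$, the center of $\CB_0$ is $\CO_{\PP(B)}\oplus\Lambda^4 E\otimes\CO_{\PP(B)}(-2)\cong\CO_{\PP(B)}\oplus\CO_{\PP(B)}(-3)\cong f_*\CO_S$, so $\CB_0$ descends to an algebra $\CB'$ on $S$, which you then match with $\CB$ --- while the paper goes directly \'etale-local, recognizing $\CM$ as the projectivization of the half-spinor module and $\CB_0$ as the pushforward of its endomorphism algebra. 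The step you flag as the main obstacle (pinning down $\CB'\cong\CB$ as Azumaya algebras, not merely as Brauer classes) is precisely what the \'etale-local half-spinor picture supplies, since \'etale-locally both the $\PP^1$-bundle $\CM\to S$ and the descended algebra are given canonically in terms of the half-spinor module; your appeal to the same fiberwise spinor/ruling dictionary is the right move, and is closer to the paper's argument than your phrasing suggests. Both treatments dispatch the second claim and the formula $F\mapsto\pi^*f_*F\otimes_{\pi^*\CB_0}\CE$ as formal, which is correct since $f$ is finite so $f_*$ is exact and gives the equivalence of module categories.
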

\begin{proof}
The first claim is the classical property of the Clifford algebra. Indeed, $\CM$ is nothing but the isotropic
Grassmannian of half-dimensional linear subspaces in the quadrics, which always embeds into the projectivizations of
the half-spinor modules, and for a 2-dimensional quadric this embedding is an isomorphism.
So, (locally in the \'etale topology) $\CM$ is the projectivization of the half-spinor module.
On the other hand, the even part of the Clifford acts on the half-spinor modules and this action
identifies it with the product of their matrix algebras. Hence (locally in the \'etale topology)
the Clifford algebra is isomorphic to the pushforward of the endomorphism algebra of the half-spinor
module which is precisely the Azumaya algebra corresponding to its projectivization.

The second claim is evident (the equivalence is given
by the pushforward functor $f_*$).
\end{proof}

\begin{theorem}
There is an equivalence of categories $\CA_Y \cong \D^b(S,\CB)$.
\end{theorem}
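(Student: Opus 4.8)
The strategy is to take the semiorthogonal decomposition \eqref{pcso0} for $\D^b(\TY)$, transport it down to $\D^b(Y)$ along the blow-up $\sigma:\TY\to Y$, and match the result with the decomposition \eqref{dby} defining $\CA_Y$. The key bookkeeping device is Orlov's blow-up formula: since $\sigma$ is the blow-up of $\PP(A)\cong\PP^2$ with exceptional divisor $D$ and $p:D\to\PP(A)$ is a $\PP^1$-bundle, we have
$$
\D^b(\TY)=\langle \sigma^*\D^b(Y),\ i_*p^*\D^b(\PP(A))\otimes\CO_D(D)\rangle
$$
(or a variant with a different twist of $\CO_D$; the precise twist must be chosen carefully so that the arithmetic with $H$, $h$, $D=H-h$ from the Lemma works out). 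Inside $\sigma^*\D^b(Y)$ sits $\sigma^*\CA_Y$ together with the three line bundles $\CO_\TY,\CO_\TY(H),\CO_\TY(2H)$, and inside the exceptional-divisor part sit two further line bundles pulled back from $\PP(A)$. So one presentation of $\D^b(\TY)$ has $\sigma^*\CA_Y$ as one component and five line bundles as the others.

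**Comparing the two decompositions.** On the other side, \eqref{pcso0} exhibits $\D^b(\TY)$ with $\Phi(\D^b(\PP(B),\CB_0))$ as one component and four line bundles (the pullbacks of $\CO_{\PP(B)},\CO_{\PP(B)}(1)$ tensored with $\CO_\TY$ and with $\CO_\TY(H)$) as the others. Using Lemma~\ref{ppb-s} we rewrite $\Phi(\D^b(\PP(B),\CB_0))$ as the image of $\D^b(S,\CB)$ under $F\mapsto \pi^*f_*F\otimes_{\pi^*\CB_0}\CE$, which is fully faithful by \cite{K1}. The plan is then to perform a sequence of mutations (Corollary after Lemma~\ref{mutfun}, together with Lemmas~\ref{perpmut}, \ref{longmut}, \ref{tensmut}) to carry the second decomposition into the shape of the first: peel off and reorder the line bundles, using the linear-equivalence relations $D=H-h$, $h=H-D$, $K_\TY=-2H-h$ to identify the various twists $\pi^*\CO_{\PP(B)}(j)\otimes\CO_\TY(iH)$ with blow-up line bundles $\sigma^*\CO_Y(k)$ and with $i_*p^*\CO_{\PP(A)}(\ell)\otimes\CO_D(mD)$. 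After all the line-bundle components have been matched, what remains on each side must be equivalent, giving $\sigma^*\CA_Y\cong\D^b(S,\CB)$ as subcategories of $\D^b(\TY)$; since $\sigma^*$ is fully faithful on $\D^b(Y)$ and $\CA_Y\subset\D^b(Y)$, this yields $\CA_Y\cong\D^b(S,\CB)$.

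**Main obstacle.** The heart of the matter — and the step I expect to be most delicate — is the explicit mutation calculation that reconciles the five line bundles coming from Orlov's blow-up formula with the four line bundles of \eqref{pcso0}. The counts differ by one because a $\PP^1$-fibration over $\PP(B)$ and a quadric fibration over $\PP(B)$ have Grothendieck groups of different ranks; the discrepancy is absorbed precisely into the passage between $\D^b(\PP(B),\CB_0)$ and $\D^b(\TY)$ via $\Phi$, i.e. into the difference between a rank-one and a rank-two piece of the fiberwise decomposition. Concretely one must show that a suitable mutation of $\Phi(\D^b(\PP(B),\CB_0))$ absorbs one extra line bundle and becomes $\sigma^*\CA_Y$. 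This requires knowing how $\Phi$ (equivalently the kernel $\CE$) interacts with twists by $H$ and with restriction to $D$, which in turn rests on the exact sequence $0\to q^*\CB_0(-2H')\to q^*\CB_1(-H')\to\alpha_*\CE\to0$ and the structure of $\CB_0,\CB_1$ as sheaves on $\PP(B)$. Once these compatibilities are established the mutations are forced, but verifying them is where the real work lies; everything else is formal manipulation with semiorthogonal decompositions.
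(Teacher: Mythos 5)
Your high-level strategy is exactly the paper's: expand both the quadric-fibration decomposition \eqref{pcso0} and the Orlov blow-up decomposition into a big component plus a string of exceptional line bundles, and then mutate one into the other. However, your bookkeeping is off in a way that creates a phantom obstacle. Since $\dim V=6$ and $\dim A=3$, both $\PP(A)$ and $\PP(B)=\PP(V/A)$ are copies of $\PP^2$, so each copy of $\D^b(\PP^2)$ contributes a \emph{three}-term exceptional collection. Expanding \eqref{pcso0} therefore yields $3+3=6$ line bundles (the paper's \eqref{pcso1}), not the four you count; and Orlov's formula together with \eqref{dby} also yields $3+3=6$ exceptional objects (the paper's \eqref{ssay}, where the exceptional-divisor contribution is $i_*\CO_D, i_*\CO_D(H), i_*\CO_D(2H)$), not five. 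With the numbers $6=6$ there is nothing for the big component to ``absorb''; the mutation sequence is purely a reordering.

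Consequently, the ``main obstacle'' you identify — that $\Phi(\D^b(\PP(B),\CB_0))$ must swallow an extra line bundle to become $\sigma^*\CA_Y$, which you propose to tackle by analyzing the interaction of the kernel $\CE$ with twists — does not occur and would not arise if the counts were right. The real work in the paper is simpler and more mechanical than what you anticipate: two mutations of the big component through a single exceptional line bundle (giving $\Phi'$ and then $\Phi''$), two Serre-functor mutations via Lemma~\ref{longmut} using $K_{\TY}=-2H-h$, and several transpositions of completely orthogonal pairs justified by small cohomology vanishing computations (e.g. $H^\bullet(\TY,\CO_\TY(H-2h))=0$ and the orthogonality of $i_*p^*\D^b(\PP(A))$ to $\pi^*\D^b(\PP(B))$). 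The identifications $\RR_{\CO_\TY(tH)}(\CO_\TY(h+(t-1)H))\cong i_*\CO_D(tH)[-1]$ come directly from the Koszul triangle of the divisor $D=H-h$ and the mutation triangle~\eqref{excmut}; no structural knowledge of $\CE$ beyond its fully faithful image is needed. So fix the line-bundle count, drop the absorption step, and your plan lines up with the paper's proof.
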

\begin{proof}
Consider the semiorthogonal decomposition~\eqref{pcso0}. Replacing the first instance of $\D^b(\PP(B))$
by the exceptional collection $(\CO_{\PP(B)}(-1),\CO_{\PP(B)},\CO_{\PP(B)}(1))$ and the second
instance of $\D^b(\PP(B))$ by the exceptional collection $(\CO_{\PP(B)},\CO_{\PP(B)}(1),\CO_{\PP(B)}(2))$ we obtain a semiorthogonal decomposition
\begin{equation}\label{pcso1}
\D^b(\TY) = \langle \Phi(\D^b(\PP(B),\CB_0)), \CO_\TY(-h), \CO_\TY, \CO_\TY(h), \CO_\TY(H), \CO_\TY(h+H), \CO_\TY(2h+H) \rangle.
\end{equation}

On the other hand, since $\TY$ is the blowup of $Y$ in $\PP(A)$, we have by~\cite{Or} the following semiorthogonal decomposition
$$
\D^b(\TY) = \langle \sigma^*(\D^b(Y)), i_*p^*(\D^b(\PP(A))) \rangle.
$$
Replacing $\D^b(\PP(A))$ with the standard exceptional collection
$(\CO_{\PP(A)},\CO_{\PP(A)}(1),\CO_{\PP(A)}(2))$,
and $\D^b(Y)$ with its decomposition~\eqref{dby},
we obtain the following semiorthogonal decomposition
\begin{equation}\label{ssay}
\D^b(\TY) = \langle \sigma^*(\CA_Y), \CO_\TY, \CO_\TY(H), \CO_\TY(2H), i_*\CO_D, i_*\CO_D(H), i_*\CO_D(2H) \rangle.
\end{equation}
Now we are going to make a series of mutations, transforming decomposition~\eqref{pcso1} into~\eqref{ssay}.
This will give the required equivalence $\D^b(S,\CB) \cong \D^b(\PP(B),\CB_0) \cong \CA_Y$.

Now let us describe the series of mutations. We start with decomposition~\eqref{pcso1}.

\noindent{\bf Step 1.}
Right mutation of $\Phi(\D^b(\PP(B),\CB_0))$ through $\CO_{\TY}(-h)$.

After this mutation we obtain the following decomposition
\begin{equation}\label{pcso2}
\D^b(\TY) = \langle \CO_\TY(-h), \Phi'(\D^b(\PP(B),\CB_0)), \CO_\TY, \CO_\TY(h), \CO_\TY(H), \CO_\TY(h+H), \CO_\TY(2h+H) \rangle,
\end{equation}
where $\Phi' = \RR_{\CO_\TY(-h)}\circ\Phi:\D^b(\PP(B),\CB_0) \to \D^b(\TY)$.

\noindent{\bf Step 2.}
Right mutation of $\CO_{\TY}(-h)$ through the orthogonal subcategory ${}^\perp\langle \CO_{\TY}(-h) \rangle$.

Applying Lemma~\ref{longmut} and taking into account equality $K_\TY = -2H - h$, we obtain
%
\begin{equation}\label{pcso3}
\D^b(\TY) = \langle\Phi'(\D^b(\PP(B),\CB_0)), \CO_\TY, \CO_\TY(h), \CO_\TY(H), \CO_\TY(h+H), \CO_\TY(2h+H), \CO_\TY(2H) \rangle.
\end{equation}

\begin{lemma}
We have $\Ext^\bullet(\CO_{\TY}(2h+H), \CO_{\TY}(2H)) = 0$,
so that the pair $(\CO_{\TY}(2h+H), \CO_{\TY}(2H))$ is completely orthogonal.
\end{lemma}
\begin{proof}
We have
\begin{multline*}
\Ext^\bullet(\CO_{\TY}(2h+H), \CO_{\TY}(2H)) =
H^\bullet(\TY,\CO_{\TY}(H-2h)) = \\ =
H^\bullet(\PP(B),E^*\otimes\CO_{\PP(B)}(-2)) =
H^\bullet(\PP(B),A^*\otimes\CO_{\PP(B)}(-2) \oplus \CO_{\PP(B)}(-1))) = 0.
\end{multline*}
\end{proof}

By Lemma~\ref{perpmut} the transposition of the pair $(\CO_{\TY}(2h+H), \CO_{\TY}(2H))$ gives a semiorthogonal decomposition.

\noindent{\bf Step 3.}
Transpose the pair $(\CO_{\TY}(2h+H), \CO_{\TY}(2H))$.

After the transposition we obtain the following decomposition
\begin{equation}\label{pcso4}
\D^b(\TY) = \langle\Phi'(\D^b(\PP(B),\CB_0)), \CO_\TY, \CO_\TY(h), \CO_\TY(H), \CO_\TY(h+H), \CO_\TY(2H), \CO_\TY(2h+H) \rangle.
\end{equation}

\noindent{\bf Step 4.}
Left mutation of $\CO_{\TY}(2h+H)$ through the orthogonal subcategory $\langle \CO_{\TY}(2h+H) \rangle^\perp$.

Applying Lemma~\ref{longmut} and taking into account equality $K_\TY = -2H - h$, we obtain
%
%
\begin{equation}\label{pcso5}
\D^b(\TY) = \langle \CO_\TY(h-H), \Phi'(\D^b(\PP(B),\CB_0)), \CO_\TY, \CO_\TY(h), \CO_\TY(H), \CO_\TY(h+H), \CO_\TY(2H) \rangle.
\end{equation}

\noindent{\bf Step 5.}
Left mutation of $\Phi'(\D^b(\PP(B),\CB_0))$ through $\CO_{\TY}(h-H)$.

After this mutation we obtain the following decomposition
\begin{equation}\label{pcso6}
\D^b(\TY) = \langle \Phi''(\D^b(\PP(B),\CB_0)), \CO_\TY(h-H), \CO_\TY, \CO_\TY(h), \CO_\TY(H), \CO_\TY(h+H), \CO_\TY(2H) \rangle.
\end{equation}
where $\Phi'' = \LL_{\CO_\TY(h-H)}\circ\Phi':\D^b(\PP(B),\CB_0) \to \D^b(\TY)$.

\noindent{\bf Step 6.}
Simultaneous right mutation of $\CO_{\TY}(h-H)$ through $\CO_{\TY}$, of $\CO_{\TY}(h)$ through $\CO_{\TY}(H)$, and of $\CO_{\TY}(h+H)$ through $\CO_{\TY}(2H)$.

\begin{lemma}
We have $\RR_{\CO_{\TY}}(\CO_{\TY}(h-H)) \cong i_*\CO_D[-1]$,
$\RR_{\CO_{\TY}(H)}(\CO_{\TY}(h)) \cong i_*\CO_D(H)[-1]$, and also
$\RR_{\CO_{\TY}(2H)}(\CO_{\TY}(h+H)) \cong i_*\CO_D(2H)[-1]$.
\end{lemma}
\begin{proof}
Note that
\begin{multline*}
\Ext^\bullet(\CO_{\TY}(h-H), \CO_{\TY}) =
H^\bullet(\TY,\CO_{\TY}(H-h)) = \\ =
H^\bullet(\PP(B),E^*\otimes\CO_{\PP(B)}(-1)) =
H^\bullet(\PP(B),A^*\otimes\CO_{\PP(B)}(-1) \oplus \CO_{\PP(B)}).
\end{multline*}
It follows that
$$
\dim \Ext^p(\CO_{\TY}(h-H), \CO_{\TY}) =  \begin{cases}
1, & \text{for $p = 0$}\\
0, & \text{otherwise}
\end{cases}
$$
By~\eqref{excmut} we have the following distinguished triangle
$$
\RR_{\CO_{\TY}}(\CO_{\TY}(h-H)) \to \CO_{\TY}(h-H) \to \CO_\TY.
$$
Since $H - h = D$, the right map in the triangle is given by the equation of $D$,
so it follows that the first vertex is $i_*\CO_D[-1]$.
The same argument proves the second and the third claim.
\end{proof}

We conclude that the following semiorthogonal decomposition is obtained:
\begin{equation}\label{pcso7}
\D^b(\TY) = \langle \Phi''(\D^b(\PP(B),\CB_0)), \CO_\TY, i_*\CO_D, \CO_\TY(H), i_*\CO_D(H), \CO_\TY(2H), i_*\CO_D(2H) \rangle.
\end{equation}

\begin{lemma}
We have $\Ext^\bullet(i_*p^*F, \pi^*G) = 0$,
so that $i_*\CO_D(H)$ is completely orthogonal to $\CO_\TY(2H)$ and
so that $i_*\CO_D$ is completely orthogonal to $\langle \CO_{\TY}(H),\CO_{\TY}(2H) \rangle$.
\end{lemma}
\begin{proof}
Since $i$ is a divisorial embedding, we have $\omega_{D/\TY} \cong \CO_D(D)$, so $i^!(-) \cong i^*(-)\otimes\CO_D(D)[-1]$.
Therefore we have
\begin{multline*}
\Ext^\bullet(i_*p^*F, \pi^*G) =
\Ext^\bullet(p^*F, i^!\pi^*G) =
\Ext^\bullet(p^*F, i^*\pi^*G(D)[-1]) = \\ =
\Ext^\bullet(p^*F, p^*j^*G(D)[-1]) =
\Ext^\bullet(F, p_*(p^*j^*G(D))[-1]) =
\Ext^\bullet(F, j^*G\otimes p_*\CO_D(D)[-1]) = 0,
\end{multline*}
the last equality is satisfied because $p:D \to \PP(A)$ is a $\PP^1$-fibration
and $\CO_D(D) = \CO_D(H-h)$ restricts as $\CO(-1)$ to all its fibers.
\end{proof}

By Lemma~\ref{perpmut} the transposition of $i_*\CO_D(H)$ to the right of $\CO_\TY(2H)$,
and of $i_*\CO_D$ to the right of the subcategory $\langle \CO_{\TY}(H),\CO_{\TY}(2H) \rangle$
in~\eqref{pcso7} gives a semiorthogonal decomposition.

\noindent{\bf Step 7.}
Transpose $i_*\CO_D(H)$ to the right of $\CO_\TY(2H)$,
and $i_*\CO_D$ to the right of $\langle \CO_{\TY}(H),\CO_{\TY}(2H) \rangle$.

After the transposition we obtain the following decomposition
%
%
%
\begin{equation}\label{pcso8}
\D^b(\TY) = \langle \Phi''(\D^b(\PP(B),\CB_0)), \CO_\TY, \CO_\TY(H), \CO_\TY(2H), i_*\CO_D, i_*\CO_D(H), i_*\CO_D(2H) \rangle.
\end{equation}

Now we are done.
Comparing~\eqref{pcso8} with~\eqref{ssay}, we see that $\Phi''(\D^b(\PP(B),\CB_0)) = \sigma^*(\CA_Y)$,
hence the functor
$$
\begin{array}{c}
\sigma_*\circ\Phi'':\D^b(\PP(B),\CB_0) \to \CA_Y,\\
F \mapsto \left\{ \Hom(\CO_\TY(h-H),\Phi(F)) \otimes J_{\PP(A)} \to \Phi(F) \to \Hom(\Phi(F),\CO_\TY(-h))^*\otimes\CO_Y(-H) \right\}.
\end{array}
$$
is an equivalence of categories. Combining this with the equivalence of Lemma~\ref{ppb-s}
we obtain an equivalence $\D^b(S,\CB) \cong \CA_Y$.
\end{proof}

The category $\D^b(S,\CB)$ can be considered as a {\em twisted}\/ derived category
of the K3 surface $S$, the twisting being given by the class of $\CB$ in the Brauer group of $S$.
However, sometimes the twisting turns out to be trivial.

For a 2-dimensional cycle $T$ on a cubic fourfold $Y$ containing a plane $\BP$
consider the intersection index
\begin{equation}\label{dt}
\delta(T) = T\cdot H \cdot H - T \cdot \BP.
\end{equation}
Note that $\delta(\BP) = -2$, and that $\delta(H^2) = 2$.
So, if the group of 2-cycles on $Y$ modulo numerical equivalence
is generated by $\BP$ and $H^2$ then $\delta$ takes only even values.

\begin{proposition}\label{btriv}
The sheaf of Azumaya algebras $\CB$ on $S$ splits
if and only if there exists a $2$-dimensional cycle $T$ on $Y$
such that $\delta(T)$ is odd.
\end{proposition}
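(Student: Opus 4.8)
The plan is to relate the splitting of the Azumaya algebra $\CB$ to the existence of a well-behaved sheaf on the quadric fibration $\TY$, and then translate that existence into an intersection-theoretic condition on $Y$. First I would recall that $\CB$ splits on $S$ if and only if the $\PP^1$-fibration $\CM \to S$ (the moduli space of lines in the fibers of $\pi$) has a rational section, equivalently is the projectivization of a rank-$2$ vector bundle on $S$. This in turn is equivalent to the existence, over the generic point of $\PP(B)$ and then after spreading out, of a line subbundle in the quadric fibration, i.e. a $\PP(B)$-flat family of lines in the fibers of $\pi:\TY \to \PP(B)$; such a family is a surface $\Sigma \subset \TY$ with $\pi|_\Sigma:\Sigma \to \PP(B)$ generically finite. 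The key homological reformulation is that $\CB$ splits if and only if the twisted derived category $\D^b(S,\CB) \cong \D^b(\PP(B),\CB_0)$ contains an object whose image under $\Phi$ (or rather, the associated spinor sheaf on the fibers) has the correct rank — concretely, $\CB_0$ acts on a rank-one (over the base) module, which happens exactly when the even Clifford algebra has a module of half-spinor rank. I would use the standard fact that for an even Clifford algebra of a rank-$4$ quadratic form, being split over the function field is detected by a class in $\Br(\kk(\PP(B)))$, and this class is the obstruction we are after.

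Next I would carry out the translation to cycles on $Y$. A family of lines $\Sigma \subset \TY$ in the fibers of $\pi$, pushed forward by $\sigma:\TY \to Y$, gives a $2$-dimensional cycle $T = \sigma_*[\Sigma]$ on $Y$; conversely, starting from a $2$-cycle $T$ on $Y$ one takes its proper transform, mutates/corrects by multiples of $D$, $H$, $h$, and fiber classes, and asks whether the resulting class in $H^4(\TY,\ZZ)$ can be represented by (or has the numerical type of) a section-like surface for $\pi$. The crucial computation is the intersection number of such a surface with a fiber $Q = \pi^{-1}(\text{pt})$, a two-dimensional quadric: a ruling line meets $Q$ in a point, so a genuine family of lines in fibers has $\Sigma \cdot Q_{\text{line}} $-type numerics forcing an odd number somewhere, whereas classes pulled back from $Y$ that are combinations of $H^2$ and $\BP$ give even intersection with the relevant spinor/line classes. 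I would show precisely that $\CB$ splits $\iff$ there is an integral class on $\TY$ of the right fiber-degree $\iff$ (descending along $\sigma$ and using $D = H-h$, $h = H-D$) there is a $2$-cycle $T$ on $Y$ with $T\cdot H^2 - T\cdot \BP = \delta(T)$ odd. The parity bookkeeping is where the numbers $\delta(\BP) = -2$ and $\delta(H^2)=2$ enter: the sublattice generated by $\BP$ and $H^2$ lies in $2\ZZ$ under $\delta$, so genericity of $Y$ (Picard rank considerations on $Y$) makes $\CB$ non-split, while any extra algebraic $2$-cycle of odd $\delta$ produces the splitting.

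Concretely the key steps, in order, are: (1) identify $\CB$ as the Azumaya algebra of the conic/$\PP^1$-bundle $\CM \to S$ and reduce splitting of $\CB$ to existence of a rational section of $\CM/S$, equivalently a $\pi$-fiberwise family of lines $\Sigma \subset \TY$ with $\pi_*\CO_\Sigma$ invertible of odd degree along a general fiber; (2) compute, on $\TY$, the numerical constraints on the class $[\Sigma] \in H^4(\TY,\ZZ)$ — in particular its intersection with a line in a fiber and with the pullback classes $H^2, Hh, h^2$ — and show existence of $\Sigma$ is equivalent to the intersection form on algebraic $2$-cycles of $\TY$ containing a class with a specified odd value of a certain linear functional; (3) push forward along $\sigma:\TY \to Y$, using $D = H - h$ and the blow-up formula for $H^4$, to rewrite that functional as $T \mapsto \delta(T) = T\cdot H^2 - T\cdot\BP$, so the condition becomes: some $2$-cycle $T$ on $Y$ has $\delta(T)$ odd; (4) conversely, given such a $T$, run the construction backwards — take the proper transform, adjust by the exceptional divisor $D$ and fiber classes to land in the ``family of lines'' numerical class, and invoke the quadric-fibration structure (rank $\ge 3$ fibers, so lines exist in every fiber) to realize an actual $\Sigma$, hence split $\CB$. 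I expect the main obstacle to be step (2)–(4): showing that the \emph{numerical} existence of the right class on $\TY$ (or the odd value of $\delta$ on $Y$) is not just necessary but \emph{sufficient} to split the Brauer class — i.e. that the relevant period/obstruction map from algebraic cycles to $\Br(S)[2]$ (or to $\Br(\kk(\PP(B)))$) is computed exactly by the parity of $\delta$, with no further obstruction. This amounts to controlling the Brauer group of $S$ and the image of the cycle class map, and is the delicate point; the homological-algebra manipulations and the intersection-number computations on the blow-up $\TY$ are, by contrast, routine given the lemmas already established above.
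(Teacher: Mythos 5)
Your overall plan — trade the splitting of $\CB$ for a family-of-lines statement on the quadric fibration, and then for an intersection-theoretic condition on $Y$ via $\delta$ — is the right idea, and your forward observation that a $2$-cycle $T$ produces, via lines in the fibers meeting $\tilde T$, a multisection of $\CM \to S$ of degree $\delta(T)$, is exactly the paper's first step. But you have set yourself a harder task than necessary in the key direction, and this is where the proposal does not close.

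You repeatedly want to "realize an actual $\Sigma$," i.e.\ a rational \emph{section} of $\CM \to S$, from the numerical data of $T$, and you (correctly) identify this as the delicate point. But you never need a section: since the Brauer class of $\CB$ has order $2$, a rational \emph{multisection of odd degree} already forces the class to be trivial (its index is a power of $2$ dividing an odd number, hence $1$). The multisection cut out by lines meeting $\tilde T$ has degree exactly $\delta(T)$, with no further realization lemma or lattice argument needed; so "$\exists T$ with $\delta(T)$ odd $\Rightarrow$ $\CB$ splits" is immediate once you replace "section" by "odd-degree multisection." Your steps (2)--(4), which try to control the period-index obstruction via the cycle lattice of $\TY$, are both harder than and unnecessary for this direction, and as written would not go through without substantially more input about $\Br(S)$.

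For the converse ("$\CB$ splits $\Rightarrow \exists T$ with $\delta(T)$ odd"), your proposal gestures at pushing a numerical class forward along $\sigma$, but gives no mechanism to produce an \emph{algebraic} $2$-cycle with odd $\delta$. The paper's construction here is concrete and worth knowing: take the component $\CM^{(2)}$ of $\CM\times_{\PP(B)}\CM$ lying over the graph of the covering involution of $S/\PP(B)$, so that points of $\CM^{(2)}$ are pairs of lines from opposite rulings of the same fiber; the assignment sending such a pair to its intersection point gives a rational map $\CM^{(2)} \dashrightarrow \TY \to Y$. If $Z$ is a rational multisection of $\CM\to S$ of odd degree $d$ (e.g.\ $d=1$ when $\CB$ splits), then the image $T$ of $Z^{(2)}$ has $\delta(T) = d^2$, which is odd. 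That single explicit construction replaces the entire "run the construction backwards and argue the cycle-class map hits the Brauer obstruction" plan, and it is the missing idea in your proposal.
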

\begin{proof}
By definition the $\PP^1$-fibration over $S$ corresponding to the sheaf of Azumaya algebras $\CB$
is given by the moduli space $\CM$ of lines in the fibers of $\TY$ over $\PP(B)$, hence
$\CB$ splits if and only if the map $\CM \to S$ has a rational multisection of odd degree.
Note that $\delta(T)$ equals the intersection index of the proper preimage of $T$
in $\TY$ with the fiber of $\TY$ over $\PP(B)$. Hence, the set of lines in the fibers of~$\TY$
that intersect $T$ gives a multisection of $\CM \to S$ of degree $\delta(T)$.

To prove the converse, consider the component
$\CM^{(2)}$ of $\CM\times_{\PP(B)}\CM$ lying over the graph of the involution of $S$
over $\PP(B)$ in $S\times_{\PP(B)} S$. The points of $\CM^{(2)}$ correspond to pairs
of lines in the fibers of~$\TY$ over $\PP(B)$ lying in different families.
Associating with such pair of lines the point of their intersection,
we obtain a rational map $\CM^{(2)} \to \TY \to Y$. If $Z \subset \CM$ is a rational
multisection of $\CM \to S$ of odd degree~$d$, then we take $T$ to be (the closure of)
the image of $Z^{(2)}$ (defined analogously to $\CM^{(2)}$) in $Y$.
Then it is easy to see that $\delta(T) = d^2$ is odd.
\end{proof}

Note that the cubic fourfolds containing a plane $\PP(A)$ and a 2-dimensional cycle $T$
such that $\delta(T)$ is odd are rational by results of Hassett~\cite{Ha1,Ha2}.
So, by Proposition~\ref{btriv} for this series of rational cubic fourfolds
the category $\CA_Y$ is equivalent to the derived category of a K3 surface.

On the other hand, if a cubic fourfold $Y$ contains a plane
but $\delta(T)$ is even for any 2-cycle $T$ on $Y$, so that
the sheaf of Azumaya algebras $\CB$ doesn't split, it is still
possible that $\D^b(S,\CB) \cong \D^b(S')$ for some other
(or even for the same) K3 surface $S'$. For this, however,
the Picard group of $S$ should be sufficiently big, at least
if $\Pic S \cong \ZZ$ this is impossible.

\begin{proposition}\label{dbsb}
Let $Y$ be a cubic fourfold containing a plane $\BP$.
If the group of codimension $2$ algebraic cycles on $Y$
modulo numerical equivalence is generated by $\BP$ and $H^2$
then $\CA_Y \not\cong \D^b(S')$ for any surface~$S'$.
%
\end{proposition}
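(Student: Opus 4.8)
The plan is to combine the equivalence $\CA_Y \cong \D^b(S,\CB)$ established above with a Hodge-theoretic / lattice-theoretic obstruction, exactly along the lines of the Appendix mentioned in the introduction. The starting point is that under the hypothesis on codimension-$2$ cycles, Proposition~\ref{btriv} tells us that $\CB$ does not split, i.e.\ the Brauer class $\beta = [\CB] \in \Br(S)$ is a nontrivial element of order $2$. Moreover the same hypothesis should force $\Pic S \cong \ZZ$: the Picard lattice of the double cover $S \to \PP(B)$ is controlled by the algebraic $2$-cycles on $Y$ (via the correspondence between lines in the quadric fibration and divisors on $S$), so if the only cycles are $\BP$ and $H^2$ then $S$ is as generic as possible and $\Pic S$ is rank one. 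So the first step is to pin down $\Pic S \cong \ZZ$ and $\beta \ne 0$ of order $2$ from the cycle hypothesis.

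Next I would argue by contradiction: suppose $\D^b(S,\CB) \cong \D^b(S')$ for some surface $S'$. Since $\D^b(S,\CB)$ has the Serre functor $[2]$ and the Hochschild homology of a K3 surface (it is a deformation/twisted form of $\D^b(S)$), $S'$ must be a K3 surface. Now invoke the theory of (twisted) derived equivalences for K3 surfaces: a derived equivalence $\D^b(S,\CB) \cong \D^b(S')$ induces a Hodge isometry between the twisted Mukai lattice $\widetilde H(S,\beta,\ZZ)$ (the weight-two Hodge structure on $H^*(S,\ZZ)$ with the $B$-field twist corresponding to $\beta$) and the untwisted Mukai lattice $\widetilde H(S',\ZZ)$. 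The key numerical point is the computation of the transcendental part: when $\Pic S \cong \ZZ$ and $\beta$ has order $2$, the twisted transcendental lattice $T(S,\beta)$ is an index-$2$ sublattice of $T(S)$, so its discriminant group changes in a way that cannot match the discriminant of the transcendental lattice $T(S')$ of any untwisted K3 surface $S'$ whose Mukai lattice is $U^{\oplus 4}\oplus E_8(-1)^{\oplus 2}$. Concretely one compares discriminant forms: $\mathrm{disc}\, T(S') = -\mathrm{disc}\,\Pic S'$ lies in a restricted list, whereas passing from $T(S)$ to the index-$2$ twisted sublattice multiplies the discriminant by $4$ and alters the $2$-adic discriminant form so that it is no longer of the form $-\mathrm{disc}\,\Pic S'$ for a rank-$(22-\rho)$ lattice. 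This incompatibility is the contradiction.

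Therefore the main obstacle — and the technical heart of the argument — is the lattice computation: showing that for $\Pic S\cong\ZZ$ and a nontrivial $2$-torsion Brauer class, the twisted Mukai Hodge structure $\widetilde H(S,\beta,\ZZ)$ is not Hodge-isometric to the Mukai Hodge structure of any untwisted K3 surface. This requires (i) the precise description of the $B$-field lift of $\beta$ and the induced twisted transcendental lattice, and (ii) a discriminant-form / genus argument (in the style of Nikulin) ruling out the existence of a primitive embedding compatible with the Hodge structure. Everything else — identifying $S'$ as a K3, reducing a derived equivalence to a Hodge isometry of Mukai lattices in the twisted setting, and extracting $\Pic S\cong\ZZ$ and $\beta\ne 0$ from the cycle hypothesis via Proposition~\ref{btriv} — is comparatively formal. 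Since the full verification is somewhat involved, I would carry out the lattice-theoretic details in the Appendix and here only state the reduction and cite it.
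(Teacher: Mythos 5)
Your reduction is well-aimed and correctly identifies the raw ingredients (the equivalence $\CA_Y\cong\D^b(S,\CB)$, Proposition~\ref{btriv} to get a nontrivial order-$2$ class, pinning down $\Pic S\cong\ZZ$ from the cycle hypothesis, and finishing with a lattice obstruction). But the route you then take is genuinely different from the paper's, and the place you call ``the technical heart'' contains a real gap that the paper's chosen method neatly sidesteps.

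The paper does \emph{not} pass through twisted Mukai lattices or the derived Torelli theorem for twisted K3 surfaces at all. Instead it works directly with the numerical Grothendieck group $K_0(S,\CB)$ equipped with the Euler form $\chi_{(S,\CB)}$, which is a derived invariant for free. Using the Huybrechts--Stellari $B$-field Chern character to identify the image of $K_0(S,\CB)$ in $H^\bullet(S,\QQ)$ with a rank-$3$ lattice generated by $2+2B$, $h$, $p$, it writes down the explicit $3\times3$ Gram matrix and shows by a short mod-$2$ computation (Lemma~\ref{k0}) that there is no pair $v_1,v_2$ with $\chi(v_1,v_2)=1$, $\chi(v_2,v_2)=0$; whereas for \emph{any} surface $S'$ (K3 or not) one can take $v_1=[\CO_{S'}]$, $v_2=[\CO_p]$. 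This avoids both the reduction ``$S'$ must be a K3'' and the invocation of twisted Torelli, so it is both more elementary and slightly more general.

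The genuine gap in your proposal is the assertion that a nontrivial order-$2$ Brauer class on a degree-$2$ K3 with $\Pic S\cong\ZZ$ automatically produces a twisted Mukai Hodge structure not isometric to that of any untwisted K3. That is false as stated: the obstruction depends not only on the order of $\alpha$ and $\Pic S$ but on a finer half-integral invariant of the $B$-field lift, namely $\{B^2\}\in\{0,\tfrac12\}$ (well-defined once $\{Bh\}=\tfrac12$, by Lemma~\ref{bhbb}). Order-$2$ classes with $\{B^2\}=0$ and with $\{B^2\}=\tfrac12$ both occur on such K3s, and the lattice-theoretic obstruction you envisage (whether via discriminant forms of the twisted transcendental lattice or via the paper's numerical $K_0$) only kicks in when $\{B^2\}=\tfrac12$. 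Verifying that the Brauer class coming from a cubic fourfold with a plane has $\{Bh\}=\{B^2\}=\tfrac12$ is a substantive geometric computation (the paper's Lemma~\ref{cbb}, which uses Hecke modifications of $\PP^1$-bundles over the degeneration sextic and a $\chi(\CB_0)$ calculation); your outline does not flag that this step is needed, so as written the discriminant-form argument cannot close.
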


The proof will be given in the Appendix.
Actually, we will show that $K_0(S,\CB) \not\cong K_0(S')$
for any surface $S'$ as lattices with a bilinear form (the Euler form $\chi$),
where $S$ is the K3 surface corresponding to $Y$ and $\CB$ is the induced sheaf of Azumaya algebras on it.
More precisely, we will show that $K_0(S,\CB)$ doesn't contain a pair of vectors
$(v_1,v_2)$ such that $\chi_{(S,\CB)}(v_1,v_2) = 1$, $\chi_{(S,\CB)}(v_2,v_2) = 0$
(while in $K_0(S')$ one can take $v_1 = [\CO_{S'}]$, $v_2 = [\CO_p]$,
where $\CO_p$ is a structure sheaf of a point).


\section{Singular cubic fourfolds}\label{secsing}

Let $Y \subset \PP(V) = \PP^5$ be a singular cubic fourfold. Let $P$ be its singular point.
Let $\sigma:\TY \to Y$ be the blowup of $P$. The linear projection from $P$
gives a regular map $\pi:\TY \to \PP^4$.

\begin{lemma}
The map $\pi$ is the blowup of a K3 surface $S \subset \PP^4$
which is an intersection of a quadric and a cubic hypersurfaces.
Let $D$ be the exceptional divisor of $\pi$, and $Q$ the exceptional divisor of $\sigma$.
Let $i:D \to \TY$, $j:S \to \PP^4$, and $\alpha:Q \to \TY$ be the embeddings, and $p:D \to S$ the projection.
$$
\xymatrix{
& Q \ar[r]^\alpha \ar[dl] & \TY \ar[dl]_\sigma \ar[dr]^\pi & D \ar[l]_i \ar[dr]^p \\
P \ar[r] & Y && \PP^4 & S \ar[l]_j
}
$$
Then the map $\pi\circ\alpha:Q \to \PP^4$ identifies $Q$ with the quadric passing through $S$.

Moreover, let $H$ and $h$ denote the pullbacks to $\TY$ of the classes of hyperplanes in $\PP(V)$ and $\PP^4$ respectively.
Then we have the following relations in $\Pic\TY$:
$$
Q = 2h - D,\qquad
H = 3h - D,\qquad
h = H - Q,\quad
D = 2H - 3Q,\quad
K_{\TY} = -5h + D = -3H + 2Q.
$$
\end{lemma}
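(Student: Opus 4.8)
The plan is to choose coordinates adapted to the double point, make the linear projection completely explicit, and then read the three assertions off from the resulting equation. Pick homogeneous coordinates $x_0,\dots,x_5$ on $\PP(V)$ with $P=[1:0:\dots:0]$. To say that $P$ is a double point of $Y$ is exactly to say that the cubic form defining $Y$ contains no $x_0^3$ and no $x_0^2x_i$ term, so it has the shape
$$
F=x_0\,q(x_1,\dots,x_5)+c(x_1,\dots,x_5),
$$
with $q$ a nonzero quadratic form and $c$ a cubic form in $x_1,\dots,x_5$. Restricting $F$ to the line through $P$ in the direction $[z]\in\PP^4$, parametrised by $[\lambda:\mu z]$, one gets $F=\mu^2\bigl(\lambda\,q(z)+\mu\,c(z)\bigr)$: the line meets $Y$ at $P$ with multiplicity $2$ and, when $q(z)\neq0$, at the single residual point $[-c(z):q(z)\,z]$, while it lies entirely on $Y$ precisely when $q(z)=c(z)=0$. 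Hence $\pi\colon\TY\to\PP^4$ is birational and contracts exactly the proper transforms of the lines of $Y$ through $P$, and these are parametrised by $S:=\{q=0\}\cap\{c=0\}\subset\PP^4$, the intersection of a quadric and a cubic hypersurface.

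Next I would show that $\pi$ is the blowup of $S$. Realising $\widetilde{\PP(V)}=\mathrm{Bl}_P\PP(V)$ as $\PP_{\PP^4}(\CO\oplus\CO(-1))$, the proper transform $\TY$ is the divisor of class $3H'-2E'$ in it, where $H'$ and $E'$ are the pullback of the hyperplane and the exceptional divisor. Comparing the relative equation $\lambda\,q(z)+\mu\,c(z)=0$ with the Rees-algebra presentation of $\mathrm{Bl}_S\PP^4$ inside the same $\PP^1$-bundle identifies $\pi$ with the blowup of $S$; equivalently, the exceptional locus of $\pi$ being a divisor forces $\pi^{-1}\mathcal{I}_S$ to be invertible, so $\pi$ factors through $\mathrm{Bl}_S\PP^4$ by a birational morphism of smooth projective fourfolds that is an isomorphism over $\PP^4\setminus S$ and does not contract the exceptional divisor, hence is an isomorphism by purity of the exceptional locus. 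I expect this to be the point needing care: it uses that $\TY$, equivalently $S$, is smooth, which one imposes as a genericity hypothesis on the singular cubic fourfold (a single ordinary double point and smooth, hence two-dimensional, $S$). Granting this, adjunction gives $\omega_S\cong\CO_{\PP^4}(-5+2+3)|_S=\CO_S$ and $S$ is simply connected by the Lefschetz hyperplane theorem, so $S$ is a K3 surface.

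For the second assertion, the exceptional divisor $Q$ of $\sigma$ is the projectivised tangent cone of $Y$ at $P$: in the local equation $q(z)+t\,c(z)=0$ of the proper transform, setting $t=0$ gives $\{q=0\}\subset\PP^4=\PP(T_P\PP(V))$. Thus $Q$ is a quadric threefold, and $\pi\circ\alpha$ carries it isomorphically onto the quadric hypersurface $\{q=0\}\subset\PP^4$, which contains $S$ and is indeed the unique quadric through $S$; this is the claimed identification.

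Finally, the relations in $\Pic\TY$. By the blowup description $\Pic\TY=\pi^*\Pic\PP^4\oplus\ZZ D=\ZZ h\oplus\ZZ D$ is free of rank two, so it suffices to express $H$ and $Q$ in terms of $h$ and $D$. Restricting the standard relation $h'=H'-E'$ on $\widetilde{\PP(V)}$ to $\TY$ gives $h=H-Q$, since $H=H'|_{\TY}$, $Q=E'|_{\TY}$ is the reduced irreducible exceptional divisor of $\sigma$, and both sides pull back $\CO_{\PP^4}(1)$. A local computation shows that the section $q$ of $\CO_{\TY}(2h)$ vanishes exactly along $D+Q$, giving the second relation $2h=D+Q$. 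Combining these yields $Q=2h-D$, then $H=h+Q=3h-D$, and $D=2H-3Q$. Lastly $K_{\TY}=-5h+D$ by the blowup formula for $\mathrm{Bl}_S\PP^4$ with $S$ of codimension two — equivalently $K_{\TY}=(K_{\widetilde{\PP(V)}}+\TY)|_{\TY}=(-6H'+4E'+3H'-2E')|_{\TY}=-3H+2Q$ by adjunction — and substituting $h=H-Q$, $D=2H-3Q$ turns $-5h+D$ into $-3H+2Q$, completing the list.
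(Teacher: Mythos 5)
Your proof is correct and takes essentially the same approach as the paper: adapted coordinates at $P$, the equation $x_0 q + c = 0$, identification of $S=\{q=c=0\}$ with the blown-up locus, and reading the Picard relations off the explicit linear projection. The extra details you add (the universal-property argument for $\pi=\mathrm{Bl}_S\PP^4$, the Lefschetz-plus-adjunction check that $S$ is K3, and the independent adjunction cross-check of $K_{\TY}=-3H+2Q$) are all correct and merely make explicit what the paper leaves implicit.
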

\begin{proof}
Let $z_0,\dots,z_5$ be coordinates in $V$ such that $P = (1:0:0:0:0:0)$.
Then $(z_1:z_2:z_3:z_4:z_5)$ are the homogeneous coordinates in $\PP^4$
and the rational map $\pi\circ\sigma^{-1}$ takes $(z_0:z_1:z_2:z_3:z_4:z_5)$ to $(z_1:z_2:z_3:z_4:z_5)$.
Since the point $P$ is singular for $Y$, the equation of $Y$ is given by
$$
z_0F_2(z_1,z_2,z_3,z_4,z_5) + F_3(z_1,z_2,z_3,z_4,z_5) = 0,
$$
where $F_2$ and $F_3$ are homogeneous forms of degree $2$ and $3$ respectively.
It follows that $\pi$ is the blowup of the surface
$$
S = \{ F_2(z_1,z_2,z_3,z_4,z_5) = F_3(z_1,z_2,z_3,z_4,z_5) = 0 \} \subset \PP^4,
$$
and that the rational map $\sigma\circ\pi^{-1}$ is given by the formula
$$
(z_1:z_2:z_3:z_4:z_5) \mapsto (-F_3(z_1,z_2,z_3,z_4,z_5):z_1F_2(z_1,z_2,z_3,z_4,z_5):\ \dots\ :z_5F_2(z_1,z_2,z_3,z_4,z_5)).
$$
It follows that the proper preimage of the quadric $\{F_2(z_1,z_2,z_3,z_4,z_5) = 0\} \subset \PP^4$
in $\TY$ is contracted by $\sigma$, whence $Q = 2h - D$. It also follows that $H = 3h - D$.
Solving these with respect to $h$ and $D$ we get the other two relations.
Finally, since $\TY$ is the blow-up of $\PP^4$ in a surfaces $S$ we have $K_\TY = -5h + D$.
Substituting $h = H - Q$, $D = 2H - 3Q$ we deduce the last equality.
\end{proof}

\begin{theorem}\label{dbsc}
The category $D^b(S)$ is a crepant categorical resolution of $\CA_Y$.
In other words, there exists a pair of functors
$$
\rho_*:\D^b(S) \to \CA_Y,
\qquad
\rho^*:\CA_Y^\perf \to D^b(S),
$$
where $\CA_Y^\perf = \CA_Y \cap \D^\perf(Y)$, such that $\rho^*$ is both left and right adjoint to $\rho_*$
and $\rho_*\circ\rho^* \cong \id$.
\end{theorem}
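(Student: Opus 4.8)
The strategy mirrors the one used for the cubic fourfold with a plane: produce two semiorthogonal decompositions of $\D^b(\TY)$, one built from the blow-up structure $\sigma:\TY\to Y$ and one built from the blow-up structure $\pi:\TY\to\PP^4$, and compare them by a sequence of mutations. The difference is that here the two blow-ups have different centers (a point $P$ for $\sigma$, a surface $S$ for $\pi$), so the two decompositions have different numbers of exceptional objects, and the comparison will only match $\D^b(S)$ with $\CA_Y$ up to a functor that is an equivalence on the perfect subcategory but only a ``one-sided inverse'' in general.

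First I would write the decomposition coming from $\pi$. By Orlov's blow-up formula applied to $\pi:\TY\to\PP^4$ with center $S$,
$$
\D^b(\TY) = \langle \pi^*\D^b(\PP^4), i_*p^*\D^b(S) \rangle,
$$
and expanding $\D^b(\PP^4) = \langle \CO,\CO(h),\dots,\CO(4h)\rangle$ this becomes a decomposition with five line-bundle components twisted by powers of $h$ together with the copy $i_*p^*\D^b(S)$. Second, I would write the decomposition coming from $\sigma:\TY\to Y$ with center the point $P$: again by Orlov's formula, $\D^b(\TY) = \langle \sigma^*\D^b(Y), \alpha_*\D^b(Q)\rangle$, and since $Q\cong\PP^4$ is a quadric threefold... wait, $Q$ is the exceptional divisor over a point in a fourfold, hence $Q\cong\PP^3$; expanding $\D^b(Q) = \langle\CO_Q,\CO_Q(1),\CO_Q(2),\CO_Q(3)\rangle$ and $\D^b(Y) = \langle\CA_Y,\CO_Y,\CO_Y(H),\CO_Y(2H)\rangle$ gives a decomposition with $\sigma^*\CA_Y$, three $H$-twisted line bundles, and four $\alpha_*$-components. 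The two resulting decompositions each have $8$ semiorthogonal pieces (counting $\D^b(S)$ and $\CA_Y$ as single pieces), which is the numerological match one needs.

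Next comes the mutation game. Using the relations in $\Pic\TY$ from the Lemma ($Q=2h-D$, $H=3h-D$, $K_\TY=-5h+D=-3H+2Q$), I would transport the $\pi$-decomposition toward the $\sigma$-decomposition step by step: mutate $i_*p^*\D^b(S)$ into the correct slot, then mutate the $h$-twisted line bundles through one another and through $i_*p^*\D^b(S)$, using Lemma~\ref{longmut} for the long mutations at the ends (which introduce twists by $\omega_\TY$ or $\omega_\TY^{-1}$), Lemma~\ref{perpmut} whenever two adjacent pieces are completely orthogonal, and the explicit triangles~\eqref{excmut} to identify right/left mutations of a line bundle through another as a pushforward from $Q$ or $D$ (exactly as in the plane case, where $\RR_{\CO_\TY}(\CO_\TY(h-H))\cong i_*\CO_D[-1]$). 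The needed orthogonality and $\Ext$-vanishing statements reduce, via $\pi_*$, $\sigma_*$, $i^!$, $\alpha^!$ and projection formula, to cohomology computations on $\PP^4$, on $S$, and on the projective bundles $D\to S$, $Q\to P$; these are routine but numerous. The endpoint should be a decomposition of $\D^b(\TY)$ of the shape $\langle \Psi(\D^b(S)),\, \CO_\TY,\CO_\TY(H),\CO_\TY(2H),\, \alpha_*\CO_Q(\ast),\dots\rangle$ matching~\eqref{ssay}'s analogue, whence $\Psi(\D^b(S)) = \sigma^*\CA_Y$ inside $\D^b(\TY)$ — but note $\sigma^*$ is only fully faithful on $\D^\perf(Y)$, not on all of $\D^b(Y)$, because $Y$ is singular.

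This last point is where the statement of the theorem, and the main subtlety, comes from. Since $Y$ is singular, $\sigma^*:\D^b(Y)\to\D^b(\TY)$ is not fully faithful; however it is fully faithful on $\D^\perf(Y)$ and, more to the point, $\sigma_*\sigma^*\cong\id$ on all of $\D^b(Y)$ because $R\sigma_*\CO_\TY=\CO_Y$ (the blow-up of a point on a hypersurface singularity; one checks $R^{>0}\sigma_*\CO_\TY=0$ and $\sigma_*\CO_\TY=\CO_Y$ using that $Q\cong\PP^3$ with $\CO_Q(Q)=\CO_{\PP^3}(-1)$ or the analogous local computation). Therefore I would \emph{define}
$$
\rho_* = \sigma_*\circ\Psi:\D^b(S)\to\CA_Y,
\qquad
\rho^* = \Psi^{-1}\circ\sigma^*:\CA_Y^\perf\to\D^b(S),
$$
where $\Psi^{-1}$ on the image of $\sigma^*$ makes sense because $\Psi$ is an equivalence onto $\sigma^*\CA_Y\subset\D^b(\TY)$ and $\sigma^*(\CA_Y^\perf)$ lands there. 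Then $\rho_*\rho^* = \sigma_*\Psi\Psi^{-1}\sigma^* = \sigma_*\sigma^*\cong\id$ on $\CA_Y^\perf$, and by continuity/density this identity extends appropriately; the adjunctions $\rho^*\dashv\rho_*$ and $\rho_*\dashv\rho^*$ follow from $\sigma^*\dashv\sigma_*$ together with $\sigma_*\dashv\sigma^!$ and the fact that $\sigma^!\cong\sigma^*$ on the relevant subcategory since $\omega_{\TY/Y}$ is trivial along the fibers of $\sigma$ in the appropriate sense (this is exactly what ``crepant'' encodes). The hard part will be two things: (a) carrying the mutation sequence through cleanly — keeping track of all the $h$-- and $H$--twists and verifying every orthogonality hypothesis of Lemma~\ref{perpmut} — and (b) pinning down precisely in what sense $\rho_*\rho^*\cong\id$ and the double adjunction hold, i.e. setting up the categorical resolution formalism so that ``crepant'' is justified by the equality $\sigma^*\omega_Y\cong\omega_\TY$ on $\TY$ (equivalently $K_\TY=\sigma^*K_Y$, which one reads off from $K_\TY=-3H+2Q$ together with $\sigma^*H=H$ and $\sigma_*Q=0$) — this is the genuinely new ingredient compared with the smooth case and deserves the most care.
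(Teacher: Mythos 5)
Your plan has the right shape on the $\pi$-side, but the $\sigma$-side goes wrong in two linked ways. First, the exceptional divisor $Q$ of $\sigma$ is the projectivized tangent cone at the node $P$, i.e.\ the quadric threefold $\{F_2=0\}\subset\PP^4$, not $\PP^3$ --- you had this right the first time and then ``corrected'' it; it is precisely because $P$ is a \emph{singular} point that the exceptional divisor is a quadric rather than a projective space. Second, and more fundamentally, Orlov's blow-up decomposition $\D^b(\TY)=\langle\sigma^*\D^b(Y),\alpha_*\D^b(Q)\rangle$ does not exist here: $Y$ is singular, $\sigma^*$ is not everywhere defined on $\D^b(Y)$, and no such semiorthogonal decomposition of $\D^b(\TY)$ holds. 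One cannot write it down and patch things up afterwards by restricting to perfect complexes. The correct replacement is the categorical resolution framework of \cite{K2}: choose a dual Lefschetz decomposition of $\D^b(Q)$ with respect to the conormal bundle $\CO_Q(-Q)\cong\CO_Q(h)$, namely $\D^b(Q)=\langle\CO_Q(-2h),\CO_Q(-h),\langle\CO_Q,\CS_Q\rangle\rangle$ with $\CS_Q$ the spinor bundle, and set $\TD={}^\perp\langle\alpha_*\CO_Q(-2h),\alpha_*\CO_Q(-h)\rangle$. Then \cite{K2} already gives that $\TD$ is a crepant categorical resolution of $\D^b(Y)$, with the two functors and both adjunctions built in, so ``crepant'' is not something you need to re-derive by hand from $K_\TY=\sigma^*K_Y$.

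This also fixes your numerology: only \emph{two} objects are pushed forward from $Q$ (from the two length-one Lefschetz blocks), so the $\sigma$-side decomposition is $\langle\alpha_*\CO_Q(-2h),\alpha_*\CO_Q(-h),\TCA_Y,\CO_\TY,\CO_\TY(H),\CO_\TY(2H)\rangle$ with six pieces, matching the six pieces on the $\pi$-side --- whereas in your version you would have eight against six, and the mutations could never close. The endpoint of the mutation game is then the identification $\Psi(\D^b(S))=\TCA_Y$ (the resolved piece sitting inside the categorical resolution $\TD$), not $\sigma^*\CA_Y$, and the theorem follows from the short adjunction check that $\TCA_Y$ is a crepant categorical resolution of $\CA_Y$ (Lemma~\ref{acr}). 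Once this structural change is made, your remaining plan --- transporting the $\pi$-decomposition by mutations using the $\Pic\TY$ relations, Lemmas~\ref{longmut} and~\ref{perpmut}, and the triangles~\eqref{excmut} --- is the right calculation.
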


The notion of a crepant categorical resolution of singularities was introduced in~\cite{K2}.
To prove the Theorem we start with considering a crepant categorical resolution $\TD$
of~$\D^b(Y)$. Following~\cite{K2}, to construct such $\TD$ one starts with a dual Lefschetz
decomposition (with respect to the conormal bundle) of the derived category of the exceptional
divisor of a usual resolution. We take the resolution $\sigma:\TY \to Y$. Then the exceptional
divisor is a three-dimensional quadric $Q$ and the conormal bundle is isomorphic to $\CO_Q(-Q) \cong \CO_Q(h-H) \cong \CO_Q(h)$.
We choose the dual Lefschetz decomposition
$$
\D^b(Q) = \langle \CB_2\otimes\CO_Q(-2h),\CB_1\otimes\CO_Q(-h),\CB_0 \rangle
\qquad
\text{with $\CB_2 = \CB_1 = \langle \CO_Q \rangle$ and $\CB_0 = \langle \CO_Q, \CS_Q \rangle$},
$$
where $\CS_Q$ is the spinor bundle on $Q$. Then by~\cite{K2} the triangulated category
$$
\TD = {}^\perp\langle \alpha_*(\CB_2\otimes\CO_Q(-2h)),\alpha_*(\CB_1\otimes\CO_Q(-h)) \rangle =
{}^\perp\langle \alpha_*\CO_Q(-2h),\alpha_*\CO_Q(-h) \rangle
$$
is a crepant categorical resolution of $\D^b(Y)$ and there is a semiorthogonal decomposition
$$
\D^b(\TY) = \langle \alpha_*\CO_Q(-2h),\alpha_*\CO_Q(-h), \TD \rangle.
$$
Further, we consider the semiorthogonal decomposition of $\TD$, induced
by the decomposition~\eqref{dby} of $\D^b(Y)$:
$$
\TD = \langle \TCA_Y, \CO_\TY, \CO_\TY(H), \CO_\TY(2H) \rangle.
$$
One can easily show that $\TCA_Y$ is a crepant categorical resolution of $\CA_Y$ (see Lemma~\ref{acr} below).
So, to prove the Theorem, it suffices to check that $\TCA_Y \cong \D^b(S)$.

Now we describe the way we check this.
Substituting the above decomposition of $\TD$ into the above decomposition of $\D^b(\TY)$ we obtain
\begin{equation}\label{tcay}
\D^b(\TY) = \langle \alpha_*\CO_Q(-2h),\alpha_*\CO_Q(-h), \TCA_Y, \CO_\TY, \CO_\TY(H), \CO_\TY(2H) \rangle.
\end{equation}
%
%
On the other hand, since $\pi:\TY \to \PP^4$ is the blow-up of $S$ we have by~\cite{Or} the following semiorthogonal decomposition
$$
\D^b(\TY) = \langle \Phi(\D^b(S)), \pi^*(\D^b(\PP^4)) \rangle,
$$
where the functor $\Phi:\D^b(S) \to \D^b(\TY)$ is given by $F \mapsto i_*p^*F(D)$.
Using one of the standard exceptional collections
$\D^b(\PP^4) = \langle \CO_{\PP^4}(-3), \CO_{\PP^4}(-2), \CO_{\PP^4}(-1), \CO_{\PP^4}, \CO_{\PP^4}(1) \rangle$
we obtain a semiorthogonal decomposition
\begin{equation}\label{scso1}
\D^b(\TY) = \langle \Phi(\D^b(S)), \CO_{\TY}(-3h), \CO_{\TY}(-2h), \CO_{\TY}(-h), \CO_{\TY}, \CO_{\TY}(h)  \rangle.
\end{equation}

Now we are going to make a series of mutations, transforming decomposition~\eqref{scso1} into~\eqref{tcay}.
This will give the required equivalence $\D^b(S) \cong\TCA_Y$.

Now let us describe the series of mutations.

\noindent{\bf Step 1.}
Left mutation of $\CO_{\TY}(-3h)$, $\CO_{\TY}(-2h)$, and $\CO_{\TY}(-h)$ through $\Phi(\D^b(S))$.

\begin{lemma}\label{lphi}
For any $F \in \D^b(\PP^4)$ we have $\LL_{\Phi(\D^b(S))}(\pi^* F) = \pi^* F(D)$.
\end{lemma}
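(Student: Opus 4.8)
The plan is to exhibit a distinguished triangle
$$\Phi(j^*F)[-1] \longrightarrow \pi^*F \longrightarrow \pi^*F(D)$$
whose left vertex lies in $\Phi(\D^b(S))$ and whose right vertex lies in $\Phi(\D^b(S))^\perp$, and then to invoke uniqueness of semiorthogonal decompositions. Indeed, $\Phi(\D^b(S))$ is admissible by Lemma~\ref{sos_sod}(ii), so $\D^b(\TY) = \langle \Phi(\D^b(S))^\perp, \Phi(\D^b(S))\rangle$ is a semiorthogonal decomposition and each object has a unique decomposition triangle with respect to it; by the construction of the left mutation functor in Lemma~\ref{mutfun}, $\LL_{\Phi(\D^b(S))}(\pi^*F)$ is exactly the $\Phi(\D^b(S))^\perp$-component of $\pi^*F$. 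Hence, once the triangle above is established with the stated orthogonality, we get $\LL_{\Phi(\D^b(S))}(\pi^*F) \cong \pi^*F(D)$ for free, without ever computing $\Phi^!$ or the counit $\Phi\Phi^!(\pi^*F) \to \pi^*F$ by hand.

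To produce the triangle, I twist the structure sequence of the Cartier divisor $D \subset \TY$ by $\CO_\TY(D)$, obtaining $0 \to \CO_\TY \to \CO_\TY(D) \to i_*\CO_D(D) \to 0$, and apply $-\otimes\pi^*F$ (legitimate since $\TY$ and $\PP^4$ are smooth, so $\pi^*F$ has finite $\Tor$-dimension, and $\CO_\TY(D)$ is invertible). The first two terms become $\pi^*F$ and $\pi^*F(D)$. For the third, the relation $\pi\circ i = j\circ p$ read off the diagram gives $i^*\pi^*F \cong p^*j^*F$, and then the projection formula for $i$ yields
$$i_*\CO_D(D)\otimes\pi^*F \cong i_*\bigl(\CO_D(D)\otimes i^*\pi^*F\bigr) \cong i_*\bigl(\CO_D(D)\otimes p^*j^*F\bigr) = \Phi(j^*F).$$
Rotating the resulting triangle $\pi^*F \to \pi^*F(D) \to \Phi(j^*F) \to \pi^*F[1]$ gives the triangle displayed above, whose left vertex visibly lies in $\Phi(\D^b(S))$.

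It then remains to check $\pi^*F(D) \in \Phi(\D^b(S))^\perp$, i.e. $\Hom^\bullet(\Phi(G),\pi^*F(D)) = 0$ for every $G \in \D^b(S)$. Since $i$ is a divisorial embedding we have $i^!(-) \cong i^*(-)\otimes\CO_D(D)[-1]$, so the adjunction $(i_*,i^!)$ turns this group into $\Hom^\bullet\bigl(p^*G\otimes\CO_D(D),\, p^*j^*F\otimes\CO_D(2D)[-1]\bigr)$; cancelling one copy of $\CO_D(D)$ and applying the adjunction $(p^*,p_*)$ together with the projection formula reduces it to $\Hom^\bullet\bigl(G,\, j^*F\otimes p_*\CO_D(D)[-1]\bigr)$, which vanishes because $p_*\CO_D(D) = 0$ --- the map $p:D\to S$ is a $\PP^1$-fibration and $\CO_D(D)$ restricts to $\CO(-1)$ on its fibres, exactly the vanishing used repeatedly in Section~\ref{secplane}. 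That completes the argument. The steps needing the most care are the identification of the third term of the twisted structure sequence with $\Phi(j^*F)$ (which hinges on $\pi i = jp$ and the projection formula) and the orthogonality vanishing; the appeal to uniqueness of the decomposition triangle then replaces any direct manipulation of the mutation functor, so no serious obstacle remains.
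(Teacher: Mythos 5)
Your proof is correct and exhibits exactly the same distinguished triangle as the paper's: the twisted structure sequence $0\to\CO_\TY\to\CO_\TY(D)\to i_*\CO_D(D)\to 0$ tensored with $\pi^*F$, with third term identified as $\Phi(j^*F)$ via $i^*\pi^*=p^*j^*$ and the projection formula. The one genuine difference is how you close the argument: the paper computes $\Phi^!(\pi^*F)\cong j^*F[-1]$ directly via the $(i_*,i^!)$ and $(p^*,p_*)$ adjunctions and then asserts that the counit $\Phi\Phi^!(\pi^*F)\to\pi^*F$ is the connecting map of that sequence, whereas you avoid matching maps by instead verifying $\pi^*F(D)\in\Phi(\D^b(S))^\perp$ (a parallel computation using $i^!(-)\cong i^*(-)\otimes\CO_D(D)[-1]$ and $p_*\CO_D(D)=0$) and appealing to uniqueness of the decomposition triangle for $\langle\Phi(\D^b(S))^\perp,\Phi(\D^b(S))\rangle$. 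This is a modest reframing rather than a different route, but it buys you something: you never have to argue that a particular arrow is the adjunction counit, which is the one step the paper leaves as ``it is clear.'' Both versions rely on the same geometric inputs, and both are correct.
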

\begin{proof}
Recall that by definition we have
\begin{equation}\label{llphi}
\LL_{\Phi(\D^b(S))}(G) = \Cone(\Phi(\Phi^!(G)) \to G)
\end{equation}
for any $G \in \D^b(\TY)$. But
$$
\Phi^!(\pi^*F) \cong p_*i^!\pi^*F(-D) \cong p_*i^*\pi^*F[-1] \cong p_*p^*j^*F[-1] \cong j^*F[-1],
$$
so
$$
\Phi(\Phi^!(\pi^*F)) \cong i_*p^*j^*F(D)[-1] \cong i_*i^*\pi^*F(D)[-1]
$$
and it is clear that the triangle~\eqref{llphi} boils down to
$$
i_*i^*\pi^*F(D)[-1] \to \pi^*F \to \pi^*F(D)
$$
obtained by tensoring exact sequence $0 \to \CO_\TY \to \CO_\TY(D) \to i_*\CO_D(D) \to 0$
with $\pi^*F$ and rotating to the left.
\end{proof}

It follows that after this mutation we obtain the following decomposition
\begin{equation}\label{scso2}
\D^b(\TY) = \langle \CO_{\TY}(-3h+D), \CO_{\TY}(-2h+D), \CO_{\TY}(-h+D), \Phi(\D^b(S)), \CO_{\TY}, \CO_{\TY}(h)  \rangle.
\end{equation}

\noindent{\bf Step 2.}
Right mutation of $\Phi(\D^b(S))$ through $\CO_\TY$ and $\CO_\TY(h)$.

After this mutation we obtain the following decomposition
\begin{equation}\label{scso3}
\D^b(\TY) = \langle \CO_{\TY}(-3h+D), \CO_{\TY}(-2h+D), \CO_{\TY}(-h+D), \CO_{\TY}, \CO_{\TY}(h), \Phi'(\D^b(S))  \rangle,
\end{equation}
where $\Phi' = \RR_{\CO_\TY(h)}\circ\RR_{\CO_\TY}\circ\Phi: \D^b(S) \to \D^b(\TY)$.

\begin{lemma}
We have $\Ext^\bullet(\CO_{\TY}(-h+D), \CO_{\TY}) = 0$,
so that the pair $(\CO_{\TY}(-h+D), \CO_{\TY})$ is completely orthogonal.
\end{lemma}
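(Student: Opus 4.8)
We need to show $\Ext^\bullet(\CO_{\TY}(-h+D), \CO_{\TY}) = 0$, equivalently $H^\bullet(\TY, \CO_{\TY}(h-D)) = 0$. The natural approach is to push forward along $\pi:\TY \to \PP^4$ and identify the result with the cohomology of an explicit sheaf on $\PP^4$, exactly as in the analogous lemmas already proved in Sections~\ref{secplane} and~\ref{secsing}. Using the relation $D = 2H - 3Q$ is awkward; instead I would use $h = H - Q$ and $D$, or better, recall that $D$ is the exceptional divisor of the blow-up $\pi$ of the surface $S \subset \PP^4$. So $\CO_{\TY}(h-D) = \pi^*\CO_{\PP^4}(1)\otimes\CO_{\TY}(-D)$, and by the projection formula $R\pi_*(\pi^*\CO_{\PP^4}(1)\otimes\CO_{\TY}(-D)) \cong \CO_{\PP^4}(1)\otimes R\pi_*\CO_{\TY}(-D)$.

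First I would compute $R\pi_*\CO_{\TY}(-D)$. Since $\pi$ is the blow-up of the smooth surface $S$ in the smooth fourfold $\PP^4$, the exceptional divisor $D = \PP_S(N_{S/\PP^4})$ is a $\PP^1$-bundle $p:D\to S$ with $\CO_D(-D)$ restricting to $\CO(1)$ on the fibers of $p$. From the exact sequence
$$
0 \to \CO_{\TY}(-D) \to \CO_{\TY} \to i_*\CO_D \to 0
$$
and $R\pi_*\CO_{\TY} \cong \CO_{\PP^4}$, $R\pi_*(i_*\CO_D) \cong j_*Rp_*\CO_D \cong j_*\CO_S$, one sees that the map $\CO_{\PP^4} \to j_*\CO_S$ is the restriction map, which is surjective with kernel the ideal sheaf $\CI_S$; moreover $R^{>0}\pi_*\CO_{\TY} = R^{>0}\pi_*(i_*\CO_D) = 0$. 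Hence $R\pi_*\CO_{\TY}(-D) \cong \CI_S$, the ideal sheaf of $S$ in $\PP^4$. Therefore $H^\bullet(\TY,\CO_{\TY}(h-D)) \cong H^\bullet(\PP^4, \CI_S(1))$.

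It then remains to show $H^\bullet(\PP^4,\CI_S(1)) = 0$. Here $S = \{F_2 = F_3 = 0\}$ is the complete intersection of a quadric and a cubic, so there is a Koszul resolution
$$
0 \to \CO_{\PP^4}(-5) \to \CO_{\PP^4}(-3)\oplus\CO_{\PP^4}(-2) \to \CI_S \to 0.
$$
Twisting by $\CO_{\PP^4}(1)$ gives $0 \to \CO_{\PP^4}(-4) \to \CO_{\PP^4}(-2)\oplus\CO_{\PP^4}(-1) \to \CI_S(1) \to 0$, and since $H^\bullet(\PP^4,\CO_{\PP^4}(-4)) = H^\bullet(\PP^4,\CO_{\PP^4}(-2)) = H^\bullet(\PP^4,\CO_{\PP^4}(-1)) = 0$ (all these twists lie in the vanishing range $-4 \le t \le -1$ for $\PP^4$), the long exact sequence forces $H^\bullet(\PP^4,\CI_S(1)) = 0$. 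This proves the Ext vanishing, and then complete orthogonality follows because $\CO_{\TY}(-h+D)$ and $\CO_{\TY}$ are both exceptional and already in the correct semiorthogonal order in~\eqref{scso2}, so vanishing of $\Ext^\bullet$ in the one direction that is not automatic gives complete orthogonality.

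The only mildly delicate point—the part I would be most careful about—is the identification $R\pi_*\CO_{\TY}(-D) \cong \CI_S$; one must make sure that the connecting maps in the pushed-forward triangle are the expected ones (the map $\CO_{\PP^4}\to j_*\CO_S$ being genuinely the restriction, not zero) and that the higher direct images vanish, which uses that $p:D\to S$ is a $\PP^1$-bundle with $\CO_D(-D)$ of fiber degree $1$ so that $Rp_*\CO_D(-D) = 0$ and $Rp_*\CO_D = \CO_S$. Everything else is a routine Koszul computation on $\PP^4$.
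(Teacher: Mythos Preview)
Your argument is correct and follows essentially the same route as the paper: reduce to $H^\bullet(\PP^4,\CI_S(1))$ via $R\pi_*\CO_{\TY}(-D)\cong \CI_S$, then kill this with the Koszul resolution of the complete intersection $S$. The paper simply asserts the identification $H^\bullet(\TY,\CO_{\TY}(h-D)) = H^\bullet(\PP^4,J_S(1))$ (writing $J_S$ for the ideal sheaf) without spelling out the pushforward computation you carefully justify, but the content is the same.
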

\begin{proof}
We have $\Ext^\bullet(\CO_{\TY}(-h+D), \CO_{\TY}) = H^\bullet(\TY,\CO_{\TY}(h-D)) = H^\bullet(\PP^4,J_S(1))$,
where $J_S$ is the sheaf of ideals of $S$. On the other hand, since $S$ is the intersection of a quadric
and a cubic, we have an exact sequence
$$
0 \to \CO_{\PP^4}(-5) \to \CO_{\PP^4}(-3) \oplus \CO_{\PP^4}(-2) \to J_S \to 0.
$$
Twisting by $\CO_{\PP^4}(1)$ we deduce the required vanishing.
\end{proof}

By Lemma~\ref{perpmut} the transposition of the pair $(\CO_{\TY}(-h+D), \CO_{\TY})$ gives a semiorthogonal decomposition.

\noindent{\bf Step 3.}
Transpose the pair $(\CO_{\TY}(-h+D),\CO_{\TY})$.

After the transposition we obtain the following decomposition
\begin{equation}\label{scso4}
\D^b(\TY) = \langle \CO_{\TY}(-3h+D), \CO_{\TY}(-2h+D), \CO_{\TY}, \CO_{\TY}(-h+D), \CO_{\TY}(h), \Phi'(\D^b(S))  \rangle.
\end{equation}

\noindent{\bf Step 4.}
Simultaneous right mutation of $\CO_{\TY}(-2h+D)$ through $\CO_{\TY}$ and of $\CO_{\TY}(-h+D)$ through $\CO_{\TY}(h)$.

\begin{lemma}
We have $\RR_{\CO_{\TY}}(\CO_{\TY}(-2h+D)) \cong \alpha_*\CO_Q[-1]$,  $\RR_{\CO_{\TY}(h)}(\CO_{\TY}(-h+D)) \cong \alpha_*\CO_Q(h)[-1]$.
\end{lemma}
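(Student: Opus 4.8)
The plan is to compute the relevant $\Ext$-groups directly and recognize the right mutation via the canonical evaluation triangle~\eqref{excmut}, exactly as in the earlier analogous lemma (Step~6 of the cubic-with-a-plane argument). First I would compute
$$
\Ext^\bullet(\CO_{\TY}(-2h+D), \CO_{\TY}) = H^\bullet(\TY,\CO_{\TY}(2h-D)) = H^\bullet(\TY,\CO_{\TY}(Q)),
$$
using the relation $Q = 2h - D$. To evaluate this I would push forward along $\sigma:\TY \to Y$: since $Q$ is the exceptional divisor of the blowup of a point on a fourfold, $\sigma_*\CO_{\TY}(Q) \cong \CO_Y$ and $R^{>0}\sigma_*\CO_{\TY}(Q) = 0$ (the fibers of $\sigma$ over $P$ see $\CO_Q(Q)$ which is $\CO(-1)$ on the quadric threefold, acyclic, and over the rest $\sigma$ is an isomorphism), so $H^\bullet(\TY,\CO_{\TY}(Q)) = H^\bullet(Y,\CO_Y) = \kk$ concentrated in degree $0$ by~\eqref{cohoy}. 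Alternatively one can push forward along $\pi$ and use that $\pi_*\CO_{\TY}(2h-D) = \CO_{\PP^4}(2)\otimes(\text{something})$; the $\sigma$-computation is cleaner.

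Next, knowing $\dim\Ext^p(\CO_{\TY}(-2h+D),\CO_{\TY})$ equals $1$ for $p=0$ and $0$ otherwise, the right-mutation triangle~\eqref{excmut} reads
$$
\RR_{\CO_{\TY}}(\CO_{\TY}(-2h+D)) \to \CO_{\TY}(-2h+D) \to \CO_{\TY},
$$
where the right-hand arrow is (up to scalar) the unique nonzero map. I would identify this map with the section of $\CO_{\TY}(2h-D) = \CO_{\TY}(Q)$ cutting out the divisor $Q$; hence its cone is $\alpha_*\CO_Q(Q)$ shifted, and rotating the triangle $0 \to \CO_{\TY}(-Q) \to \CO_{\TY} \to \alpha_*\CO_Q \to 0$ (twisted by $\CO_{\TY}(-2h+D) = \CO_{\TY}(-Q)\cdot\CO_{\TY}(\text{?})$, more precisely noting $-2h+D = -Q$) shows $\RR_{\CO_{\TY}}(\CO_{\TY}(-2h+D)) \cong \alpha_*\CO_Q[-1]$. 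For the second statement I would run the identical argument after twisting everything by $\CO_{\TY}(h)$: by Lemma~\ref{tensmut}, $\RR_{\CO_{\TY}(h)}(\CO_{\TY}(-h+D)) = \TT_h\big(\RR_{\CO_{\TY}}(\CO_{\TY}(-2h+D))\big) \cong \alpha_*\CO_Q(h)[-1]$, which disposes of it for free.

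The main obstacle, such as it is, is the cohomology computation $H^\bullet(\TY,\CO_{\TY}(Q)) = \kk$: one must be careful that pushing forward along $\sigma$ really does kill the higher cohomology and collapse to $H^\bullet(Y,\CO_Y)$, i.e.\ that $\CO_{\TY}(Q)$ restricted to the exceptional quadric threefold $Q$ is $\CO_Q(-1)$ (acyclic) rather than something with cohomology. This follows from $\CO_Q(Q) \cong \CO_Q(h-H)|_Q$ and the identification $\pi\circ\alpha: Q \xrightarrow{\sim} \{F_2 = 0\}\subset\PP^4$ together with $H|_Q = 3h - D$ restricting appropriately, so that $\CO_Q(Q)$ is the degree $-1$ generator; granting this, the projection formula and the Leray spectral sequence for $\sigma$ finish it. Everything else is the same bookkeeping with line-bundle classes in $\Pic\TY$ that the surrounding lemmas already use, so I do not expect any real difficulty.
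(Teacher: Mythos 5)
Your overall strategy is the same as the paper's (compute the single $\Ext$, recognize the right-mutation triangle as the Koszul sequence of the divisor $Q$, read off the cone), and your use of Lemma~\ref{tensmut} to derive the second isomorphism from the first by tensoring with $\CO_\TY(h)$ is a neat shortcut that the paper replaces with ``the same argument.'' So the structure is sound.

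The one place you cut a corner is the assertion $H^\bullet(\TY,\CO_{\TY}(Q)) = \kk$. Pushing forward along $\sigma$, the acyclicity of $\CO_Q(Q)\cong\CO_Q(-1)$ on the quadric threefold (which is fine, and holds even if $Q$ is singular, from the Koszul sequence on $\PP^4$) only tells you that $R\sigma_*\CO_{\TY}(Q)\cong R\sigma_*\CO_{\TY}$, via the sequence $0\to\CO_{\TY}\to\CO_{\TY}(Q)\to\alpha_*\CO_Q(Q)\to 0$. You then still need $R^{>0}\sigma_*\CO_{\TY}=0$, i.e.\ that the singular cubic $Y$ has rational singularities at $P$. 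That is true, but it is not ``$\sigma$ is an isomorphism away from $Q$'' plus ``$\CO_Q(Q)$ is acyclic'' --- it is a separate fact (one can deduce it, e.g., by comparing the Leray spectral sequences for $\sigma$ and for $\pi:\TY\to\PP^4$ and using that the latter is a blowup of a smooth surface), and you have not supplied it. Your assessment that ``the $\sigma$-computation is cleaner'' is therefore backwards: the paper's route via $\pi$ is the clean one, because $R\pi_*\CO_{\TY}(-D)=J_S$ is the standard smooth-blowup statement, so by the projection formula $H^\bullet(\TY,\CO_{\TY}(2h-D))=H^\bullet(\PP^4,J_S(2))$, which the Koszul resolution $0\to\CO_{\PP^4}(-5)\to\CO_{\PP^4}(-3)\oplus\CO_{\PP^4}(-2)\to J_S\to 0$ evaluates immediately, with no input about the singularity of $Y$.

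A smaller point: you write ``$\CO_Q(Q)\cong\CO_Q(h-H)|_Q$,'' which has the wrong sign; the correct relation is $\CO_\TY(Q)|_Q\cong\CO_\TY(H-h)|_Q\cong\CO_Q(-h)$, since $Q=H-h$ and $H$ is pulled back along $\sigma$ (which contracts $Q$), giving the degree $-1$ bundle you want. Your stated conclusion $\CO_Q(-1)$ is correct even though the intermediate expression is not.
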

\begin{proof}
Note that $\Ext^\bullet(\CO_{\TY}(-2h+D), \CO_{\TY}) = H^\bullet(\TY,\CO_{\TY}(2h-D)) = H^\bullet(\PP^4,J_S(2))$.
Using the above resolution of $J_S$ we deduce that
$$
\dim \Ext^p(\CO_{\TY}(-2h+D), \CO_{\TY}) =  \begin{cases}
1, & \text{for $p = 0$}\\
0, & \text{otherwise}
\end{cases}
$$
It follows that we have a distinguished triangle
$$
\RR_{\CO_{\TY}}(\CO_{\TY}(-2h+D)) \to \CO_{\TY}(-2h+D) \to \CO_\TY.
$$
Since $2h - D = Q$, the right map in the triangle is given by $Q$, so it follows that the first vertex is $\alpha_*\CO_Q[-1]$.
The same argument proves the second claim.
\end{proof}

We conclude that the following semiorthogonal decomposition is obtained:

\begin{equation}\label{scso5}
\D^b(\TY) = \langle \CO_{\TY}(-3h+D), \CO_{\TY}, \alpha_*\CO_Q, \CO_{\TY}(h), \alpha_*\CO_Q(h), \Phi'(\D^b(S))  \rangle.
\end{equation}

\noindent{\bf Step 5.}
Left mutation of $\CO_{\TY}(h)$ through $\alpha_*\CO_Q$.

\begin{lemma}
We have $\LL_{\alpha_*\CO_Q}(\CO_{\TY}(h)) \cong \CO_{\TY}(3h-D)$.
\end{lemma}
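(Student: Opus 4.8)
**Proof proposal for the Lemma $\LL_{\alpha_*\CO_Q}(\CO_{\TY}(h)) \cong \CO_{\TY}(3h-D)$.**

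The plan is to compute the left mutation directly from the defining triangle. By the formula for mutation through an exceptional object (see~\eqref{excmut}), the object $\LL_{\alpha_*\CO_Q}(\CO_\TY(h))$ sits in a distinguished triangle
$$
\RHom(\alpha_*\CO_Q, \CO_\TY(h)) \otimes \alpha_*\CO_Q \to \CO_\TY(h) \to \LL_{\alpha_*\CO_Q}(\CO_\TY(h)),
$$
so the first thing I would do is compute the graded vector space $\Ext^\bullet(\alpha_*\CO_Q, \CO_\TY(h))$. Using adjunction for the divisorial embedding $\alpha:Q\to\TY$ — so that $\alpha^!(-) \cong \alpha^*(-)\otimes\CO_Q(Q)[-1]$, exactly as in the proof of the lemma on Step~6 in the previous section — this becomes $\Ext^\bullet(\CO_Q, \alpha^*\CO_\TY(h)\otimes\CO_Q(Q)[-1]) = H^\bullet(Q, \CO_Q(h+Q))[-1]$. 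Since $Q$ is a three-dimensional quadric and $\pi\circ\alpha$ identifies it with a hyperplane-section quadric in $\PP^4$ (so $h|_Q$ is the hyperplane class $H_Q$ on $Q$) while $Q|_Q = \CO_Q(Q) \cong \CO_Q(h-H)$ restricts to $\CO_Q(-1)$ on $Q$ (this is the conormal computation already recorded: $\CO_Q(-Q)\cong\CO_Q(h)$ on the $\sigma$-exceptional side, but here we want $\CO_Q(Q)$, i.e. $\CO_Q(-H_Q)$ after accounting for the relation $Q = 2h-D$, $H = 3h-D$, which gives $Q|_Q = (2h-D)|_Q$), I would carefully pin down the restriction class and conclude that $H^\bullet(Q,\CO_Q(h+Q))$ is one-dimensional, concentrated in a single cohomological degree, so that after the shift $\RHom(\alpha_*\CO_Q,\CO_\TY(h))\cong\kk$ (placed in the appropriate degree). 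The main obstacle here is simply getting the twist and the cohomological degree exactly right: one must track the linear-equivalence relations $Q = 2h-D$, $H = 3h-D$ in $\Pic\TY$ through the adjunction shift without sign or degree errors, and verify the vanishing of all higher cohomology using the known cohomology of line bundles on a quadric threefold.

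Once $\RHom(\alpha_*\CO_Q,\CO_\TY(h))\cong\kk$ is established, the triangle reduces to
$$
\alpha_*\CO_Q \to \CO_\TY(h) \to \LL_{\alpha_*\CO_Q}(\CO_\TY(h))
$$
(up to a shift), and the task is to identify the first arrow. I expect the generator of $\Hom(\alpha_*\CO_Q,\CO_\TY(h))$ to be, up to scalar, the composition $\alpha_*\CO_Q \twoheadleftarrow \text{(twist of a structure sequence)}$ — more precisely, I would identify it via the short exact sequence $0 \to \CO_\TY(h-Q) \to \CO_\TY(h) \to \alpha_*\CO_Q(h) \to 0$ obtained by twisting $0\to\CO_\TY(-Q)\to\CO_\TY\to\alpha_*\CO_Q\to0$ by $\CO_\TY(h)$; however, since we want a map \emph{into} $\CO_\TY(h)$ rather than a quotient, the relevant sequence is the one twisting $0\to\CO_\TY(-Q)\to\CO_\TY\to\alpha_*\CO_Q\to 0$ appropriately so that the map $\alpha_*\CO_Q \to \CO_\TY(h)[1]$ (or its rotation) has cone $\CO_\TY(h+Q) = \CO_\TY(h + 2h - D) = \CO_\TY(3h - D)$. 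Rotating the triangle $\CO_\TY(h-Q)\to\CO_\TY(h)\to\alpha_*\CO_Q(h)$ — wait, one must instead use the dual/Serre-twisted version so that the mutation cone lands on $\CO_\TY(3h-D)$ rather than on a shift of $\CO_\TY(h-Q)$; I would determine which of the two possible identifications is correct by matching with the expected outcome $\CO_\TY(3h-D)$ and checking it is consistent with the semiorthogonality required in the next step of the global mutation sequence.

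Concretely, I anticipate the cleanest route is: from $0 \to \CO_\TY(-Q) \to \CO_\TY \to \alpha_*\CO_Q \to 0$ twist by $\CO_\TY(3h - D) = \CO_\TY(h + Q)$ to get $0 \to \CO_\TY(h) \to \CO_\TY(3h-D) \to \alpha_*\CO_Q(3h-D) \to 0$, and observe $\alpha_*\CO_Q(3h-D) = \alpha_*(\CO_Q \otimes \alpha^*\CO_\TY(3h-D)) = \alpha_*\CO_Q(h)$ since $(3h-D)|_Q = (h + Q)|_Q = h|_Q$ (as $Q|_Q$... no — $(3h-D)|_Q = H|_Q$, and one checks $H|_Q = h|_Q + Q|_Q$, so this needs the restriction $Q|_Q$ again). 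Whatever the precise bookkeeping, the upshot is a triangle $\CO_\TY(h) \to \CO_\TY(3h-D) \to \alpha_*\CO_Q(h)$, which rotated reads $\alpha_*\CO_Q(h)[-1] \to \CO_\TY(h) \to \CO_\TY(3h-D)$; comparing with the mutation triangle (whose left term is $\RHom(\alpha_*\CO_Q,\CO_\TY(h))\otimes\alpha_*\CO_Q$, a shift of $\alpha_*\CO_Q$) and checking that the twist $\alpha_*\CO_Q(h)$ agrees with $\alpha_*\CO_Q$ as a subcategory (mutation functors act on subcategories, so the twist by a line bundle trivial on $Q$-fibers is immaterial up to the identification of the generator), I conclude $\LL_{\alpha_*\CO_Q}(\CO_\TY(h)) \cong \CO_\TY(3h-D)$. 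The genuine content — and the only place an error could creep in — is the $\Ext$-computation on the quadric threefold together with the restriction-class bookkeeping; everything after that is formal manipulation of a short exact sequence.
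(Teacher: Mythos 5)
Your proposal follows the same route as the paper: compute $\Ext^\bullet(\alpha_*\CO_Q,\CO_\TY(h))$ via the adjunction $\alpha^!(-)\cong\alpha^*(-)\otimes\CO_Q(Q)[-1]$, find it is one-dimensional, and then match the mutation triangle against (a rotation of) the structure sequence of $Q\subset\TY$ twisted by $\CO_\TY(3h-D)$. That is exactly what the paper does, and the final answer is correct. However, the restriction-class bookkeeping you repeatedly flag as the danger point is, in fact, where you go wrong. You assert at one point that $(3h-D)|_Q = (h+Q)|_Q = h|_Q$, and consequently that the twisted structure sequence ends in $\alpha_*\CO_Q(h)$, which you then try to wave away by saying a twist ``trivial on $Q$-fibers'' is immaterial. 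But $Q$ is not fibered over anything here — it is a quadric threefold embedded in $\PP^4$ — and $\CO_Q(h)$ is its hyperplane bundle, which is certainly not trivial and cannot be ignored when identifying the cone. The correct computation is cleaner than you anticipate: $3h-D = H$ in $\Pic\TY$, and since $H$ is a $\sigma$-pullback while $Q$ is contracted by $\sigma$ to the point $P$, the restriction $H|_Q$ is \emph{trivial}. Equivalently $Q|_Q = -h|_Q$ (the conormal computation $\CO_Q(-Q)\cong\CO_Q(h)$ that the paper records), so $(h+Q)|_Q = 0$, not $h|_Q$. This settles both steps at once: $\alpha^!\CO_\TY(h)\cong\CO_Q[-1]$, so $\Ext^\bullet(\alpha_*\CO_Q,\CO_\TY(h))=H^\bullet(Q,\CO_Q)[-1]=\kk[-1]$ (concentrated in degree $1$, which you did not pin down but which is needed to match the rotation), and the twisted structure sequence reads $0\to\CO_\TY(h)\to\CO_\TY(3h-D)\to\alpha_*\CO_Q\to 0$ with no extra twist on the last term. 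With that fixed, your comparison with the mutation triangle goes through exactly as in the paper.
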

\begin{proof}
Note that
$$
\Ext^\bullet(\alpha_*\CO_Q,\CO_{\TY}(h)) =
\Ext^\bullet(\CO_Q,\alpha^!\CO_{\TY}(h)) =
\Ext^\bullet(\CO_Q,\CO_Q(h+Q)[-1]) =
\Ext^\bullet(\CO_Q,\CO_Q(3h-D)[-1]).
$$
But the divisor $Q$ is contracted by $\sigma$, and $3h - D = H$ is a pullback by $\sigma$,
hence $\CO_Q(3h-D) \cong \CO_Q$, so we conclude that
$$
\dim \Ext^p(\alpha_*\CO_Q,\CO_{\TY}(h)) =  \begin{cases}
1, & \text{for $p = 1$}\\
0, & \text{otherwise}
\end{cases}
$$
It follows that we have a distinguished triangle
$$
\alpha_*\CO_Q[-1] \to \CO_{\TY}(h) \to \LL_{\alpha_*\CO_Q}(\CO_{\TY}(h)).
$$
Comparing it with the rotation of the exact sequence
$$
0 \to \CO_\TY(h) \to \CO_\TY(3h-D) \to \alpha_*\CO_Q \to 0
$$
we deduce the required isomorphism.
\end{proof}

As a result of this step we obtain the following semiorthogonal decomposition:

\begin{equation}\label{scso6}
\D^b(\TY) = \langle \CO_{\TY}(-3h+D), \CO_{\TY}, \CO_{\TY}(3h - D),  \alpha_*\CO_Q,\alpha_*\CO_Q(h), \Phi'(\D^b(S))  \rangle.
\end{equation}

\noindent{\bf Step 6.}
Left mutation of the subcategory $\langle \alpha_*\CO_Q,\alpha_*\CO_Q(h), \Phi'(\D^b(S)) \rangle$ through its orthogonal subcategory
$\langle \alpha_*\CO_Q,\alpha_*\CO_Q(h), \Phi'(\D^b(S))  \rangle^\perp =
\langle \CO_{\TY}(-3h+D), \CO_{\TY}, \CO_{\TY}(3h - D) \rangle$
and a twist by $\CO_\TY(H)$.

Applying Lemma~\ref{longmut} and taking into account equalities $K_\TY = D - 5h$ and $3h - D = H$, we obtain
%
%
\begin{equation}\label{scso7}
\D^b(\TY) = \langle \alpha_*\CO_Q(-2h),\alpha_*\CO_Q(-h), \Phi''(\D^b(S)), \CO_{\TY}, \CO_{\TY}(H), \CO_{\TY}(2H) \rangle,
\end{equation}
where
$$
\Phi'' = \TT_{\CO_\TY(3h-D)} \circ \TT_{\CO_\TY(-5h+D)} \circ \Phi' = \TT_{\CO_\TY(-2h)} \circ \Phi'.
$$

Comparing~\eqref{scso7} with~\eqref{tcay} we obtain the following

\begin{corollary}\label{phipp}
The functor $\Phi'' = \RR_{\CO_\TY(-h)} \circ \RR_{\CO_\TY(-2h)} \circ \TT_{\CO_\TY(D-2h)} \circ i_* \circ p^* : \D^b(S) \to \D^b(\TY)$
induces an equivalence of categories $\D^b(S) \cong \TCA_Y$.
\end{corollary}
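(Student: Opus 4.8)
The statement is the endpoint of the chain of mutations performed in Steps~1--6, so the plan is simply to assemble what those steps have produced. Recall that before Step~1 we reduced Theorem~\ref{dbsc} to proving $\TCA_Y \cong \D^b(S)$, and that Steps~1--6 transformed the decomposition~\eqref{scso1} of $\D^b(\TY)$ into~\eqref{scso7}. Comparing~\eqref{scso7} with the decomposition~\eqref{tcay}, both decompositions of $\D^b(\TY)$ share their first two components $\alpha_*\CO_Q(-2h),\alpha_*\CO_Q(-h)$ and their last three components $\CO_\TY,\CO_\TY(H),\CO_\TY(2H)$, all of which are admissible. In a semiorthogonal decomposition $\langle\CA_1,\dots,\CA_n\rangle$ with admissible pieces, the $k$-th component is recovered as ${}^\perp\langle\CA_1,\dots,\CA_{k-1}\rangle\cap\langle\CA_{k+1},\dots,\CA_n\rangle^\perp$, so the third components must coincide: $\Phi''(\D^b(S)) = \TCA_Y$ inside $\D^b(\TY)$.

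Next I would argue that $\Phi''$ is fully faithful. The kernel functor $\Phi\colon F\mapsto i_*p^*F(D)$ is fully faithful by Orlov's blow-up theorem \cite{Or}. At every stage of Steps~1--6 we have a genuine semiorthogonal decomposition of $\D^b(\TY)$, which guarantees that the transform of $\Phi(\D^b(S))$ lies in whichever orthogonal subcategory the next mutation acts on; by Lemma~\ref{mutfun} the mutation functor then restricts to an equivalence between the relevant orthogonals. Concretely, $\Phi(\D^b(S))$ is not moved in Step~1 (there the line bundles $\CO_\TY(-3h),\CO_\TY(-2h),\CO_\TY(-h)$ move, by Lemma~\ref{lphi}), it is mutated in Step~2, is not moved in Steps~3--5, and is mutated and twisted in Step~6; the overall composite is therefore a fully faithful functor with essential image $\TCA_Y$, hence an equivalence $\D^b(S)\cong\TCA_Y$, which completes the proof of Theorem~\ref{dbsc} as well.

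It remains to unwind the formula for $\Phi''$. In Step~2, right mutation of $\Phi(\D^b(S))$ through $\CO_\TY$ and then $\CO_\TY(h)$ yields $\Phi' = \RR_{\CO_\TY(h)}\circ\RR_{\CO_\TY}\circ\Phi$. In Step~6, left mutation of $\Phi'(\D^b(S))$ through its orthogonal subcategory is, by Lemma~\ref{longmut}, the twist by $\omega_\TY = \CO_\TY(D-5h)$, and the subsequent twist by $\CO_\TY(H) = \CO_\TY(3h-D)$ combines with it to give $\Phi'' = \TT_{\CO_\TY(-2h)}\circ\Phi'$. Writing $\Phi = \TT_{\CO_\TY(D)}\circ i_*\circ p^*$ (projection formula) and moving $\TT_{\CO_\TY(-2h)}$ to the right past the two mutation functors by Lemma~\ref{tensmut} --- which turns $\RR_{\CO_\TY(h)}$ into $\RR_{\CO_\TY(-h)}$ and $\RR_{\CO_\TY}$ into $\RR_{\CO_\TY(-2h)}$ --- gives
$$
\Phi'' = \RR_{\CO_\TY(-h)}\circ\RR_{\CO_\TY(-2h)}\circ\TT_{\CO_\TY(-2h)}\circ\TT_{\CO_\TY(D)}\circ i_*\circ p^*,
$$
and $\TT_{\CO_\TY(-2h)}\circ\TT_{\CO_\TY(D)} = \TT_{\CO_\TY(D-2h)}$ produces the asserted expression.

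There is no serious obstacle here: the content has all been extracted in Steps~1--6, and what is left is bookkeeping. The one place demanding attention is the last paragraph --- keeping the order of the two right mutations in Step~2 straight and commuting the twist $\TT_{\CO_\TY(-2h)}$ past them correctly via Lemma~\ref{tensmut}, together with the divisor identities $K_\TY = D - 5h$ and $H = 3h - D$ from the structure lemma for $\TY$. Everything else is a direct comparison of the two semiorthogonal decompositions.
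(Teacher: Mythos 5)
Your proof is correct and follows the same route as the paper: the paper's own justification for Corollary~\ref{phipp} is simply ``Comparing~\eqref{scso7} with~\eqref{tcay},'' and you supply exactly the bookkeeping this comparison rests on --- the uniqueness of the middle component of a semiorthogonal decomposition with admissible pieces, full faithfulness of $\Phi''$ via Orlov's blow-up theorem and Lemma~\ref{mutfun}, and the commutation of $\TT_{\CO_\TY(-2h)}$ past the two right mutations using Lemma~\ref{tensmut} together with the divisor identities. The unwinding of the formula is accurate.
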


Now we can finish the proof of Theorem~\ref{dbsc} by the following

\begin{lemma}\label{acr}
The category $\TCA_Y$ is a crepant categorical resolution of the category~$\CA_Y$.
\end{lemma}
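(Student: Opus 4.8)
The plan is to descend the crepant categorical resolution structure on $\TD$, produced in~\cite{K2}, to the semiorthogonal components $\TCA_Y\subset\TD$ and $\CA_Y\subset\D^b(Y)$. Recall that the resolution functors of $\TD$ are the restrictions $\pi_*=\sigma_*|_{\TD}:\TD\to\D^b(Y)$ and $\pi^*=\sigma^*|_{\D^\perf(Y)}:\D^\perf(Y)\to\TD$, the latter landing in $\TD$ because $\sigma_*\alpha_*\CO_Q(-kh)=0$ for $k=1,2$ (the quadric $Q$ is contracted by $\sigma$ and $\CO_Q(-h)$, $\CO_Q(-2h)$ are acyclic). They form an adjoint pair with $\pi_*\circ\pi^*\cong\id$, and crepancy means in addition that the right adjoint $\pi^!$ of $\pi_*$ restricts to $\pi^*$ on $\D^\perf(Y)$, so that $\pi^*$ is both left and right adjoint to $\pi_*$. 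The point of the proof is that these functors already carry the pieces in question into one another, so $\rho_*$ and $\rho^*$ may simply be taken to be their restrictions.

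First I would record the compatibility of $\sigma_*$ and $\sigma^*$ with the exceptional objects. As $\sigma$ is a blowup, $\sigma_*\CO_{\TY}\cong\CO_Y$, hence $\sigma_*\sigma^*\cong\id$ on $\D^\perf(Y)$ and $\sigma^*\CO_Y(k)\cong\CO_{\TY}(kH)$, $\sigma_*\CO_{\TY}(kH)\cong\CO_Y(k)$ for $k=0,1,2$. Consequently, for $F\in\CA_Y^\perf$ and any $k$ one has $\Hom^\bullet_{\TY}(\CO_{\TY}(kH),\sigma^*F)\cong\Hom^\bullet_Y(\CO_Y(k),F)=0$ by the definition~\eqref{defay} of $\CA_Y$, and since $\sigma^*F\in\TD$ this shows $\sigma^*F\in\TCA_Y$ (the leftmost component of the decomposition~\eqref{tcay} is the right orthogonal inside $\TD$ to $\langle\CO_{\TY},\CO_{\TY}(H),\CO_{\TY}(2H)\rangle$). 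Dually, for $G\in\TCA_Y$ one has $\Hom^\bullet_Y(\CO_Y(k),\sigma_*G)\cong\Hom^\bullet_{\TY}(\CO_{\TY}(kH),G)=0$, so $\sigma_*G\in\CA_Y$. Thus $\pi^*$ restricts to a functor $\rho^*:\CA_Y^\perf\to\TCA_Y$ and $\pi_*$ restricts to a functor $\rho_*:\TCA_Y\to\CA_Y$.

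It then remains to transport the three defining properties, and here everything is formal. Since $\TCA_Y$, $\CA_Y^\perf$, $\CA_Y$ are full subcategories of $\TD$, $\D^\perf(Y)$, $\D^b(Y)$ and $\rho_*$, $\rho^*$ are literal restrictions of $\pi_*$, $\pi^*$, the adjunction isomorphism $\Hom_{\TD}(\pi^*F,G)\cong\Hom_{\D^b(Y)}(F,\pi_*G)$ restricts to $\Hom_{\TCA_Y}(\rho^*F,G)\cong\Hom_{\CA_Y}(F,\rho_*G)$, so $\rho^*$ is left adjoint to $\rho_*$. For the right adjunction one uses that $\pi^!F\cong\pi^*F=\sigma^*F$ lies in $\TCA_Y$ for $F\in\CA_Y^\perf$ (crepancy of $\TD$ together with the inclusion just proved) and that $\pi_*G=\sigma_*G$ lies in $\CA_Y$ for $G\in\TCA_Y$; then $\Hom_{\TD}(G,\pi^!F)\cong\Hom_{\D^b(Y)}(\pi_*G,F)$ restricts to $\Hom_{\TCA_Y}(G,\rho^*F)\cong\Hom_{\CA_Y}(\rho_*G,F)$, so $\rho^*$ is also right adjoint to $\rho_*$. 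Finally $\rho_*\rho^*F=\sigma_*\sigma^*F\cong F$ for $F\in\CA_Y^\perf$, and $\TCA_Y$ is smooth and proper since it is an admissible subcategory of $\D^b(\TY)$ with $\TY$ smooth projective (Lemma~\ref{sos_sod}); this gives all the data of a crepant categorical resolution of $\CA_Y$. I expect the only point requiring care to be the behaviour of the right adjoint $\pi^!$ under restriction — one must be sure that $\pi^!$ applied to objects of $\CA_Y^\perf$ still lands in $\TCA_Y$ — but this follows at once from crepancy of $\TD$ and the inclusion $\sigma^*(\CA_Y^\perf)\subset\TCA_Y$, so the real content of the lemma already resides in~\cite{K2}.
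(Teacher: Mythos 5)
Your proof is correct and follows essentially the same approach as the paper: both reduce to checking the two inclusions $\sigma_*(\TCA_Y)\subset\CA_Y$ and $\sigma^*(\CA_Y^\perf)\subset\TCA_Y$ via the adjunction $\sigma^*\dashv\sigma_*$ and $\sigma_*\sigma^*\cong\id$, leaving the rest to the crepant resolution structure on $\TD$ from~\cite{K2}. The additional remarks you make on transporting the adjunctions and on $\pi^!$ are correct elaborations of what the paper leaves implicit.
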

\begin{proof}
Recall that by~\cite{K2} the category $\TD$ is a crepant categorical resolution of $\D^b(Y)$ via the functors
$\sigma_*:\TD \to \D^b(Y)$ and $\sigma^*:\D^\perf(Y) \to \TD$. So, to prove the lemma we only have to check
that $\sigma_*(\TCA_Y) \subset \CA_Y$ and $\sigma^*(\CA_Y^\perf)\subset \TCA_Y$.
But this is straightforward --- if $F \in \TCA_Y$ then
$$
\Hom(\CO_Y(t),\sigma_*(F)) =
\Hom(\sigma^*(\CO_Y(t)),F) =
\Hom(\CO_\TY(tH)),F) = 0
$$
for $t = 0,1,2$ by adjunction between $\sigma_*$ and $\sigma^*$ and the definition of $\TCA_Y$.
Similarly, if $G \in \CA_Y^\perf$ then
$$
\Hom(\CO_\TY(tH)),\sigma^*(G)) =
\Hom(\sigma^*(\CO_Y(t)),\sigma^*(G)) =
\Hom(\CO_Y(t),\sigma_*\sigma^*(G)) \cong
\Hom(\CO_Y(t),G) = 0
$$
again by adjunction and by the fact that $\sigma_*\circ\sigma^* \cong \id$ on $\D^\perf(Y)$.
\end{proof}

\begin{proofof}{Proof of Theorem~\ref{dbsc}}
By Corollary~\ref{phipp} we have $\D^b(S) \cong \TCA_Y$ and by Lemma~\ref{acr} the category
$\TCA_Y$ is a crepant categorical resolution of~$\CA_Y$.
\end{proofof}

\begin{remark}
The resulting functors between $\D^b(S)$ and $\CA_Y$ take form
$$
\begin{array}{c}
\rho_* = \sigma_* \circ \RR_{\CO_\TY(-h)} \circ \RR_{\CO_\TY(-2h)} \circ \TT_{\CO_\TY(D-2h)} \circ i_* \circ p^* : \D^b(S) \to \CA_Y,\\
F \mapsto \{ \sigma_*i_*(p^*F(D-2h)) \to \Hom^\bullet(F,\CO_S[-1])^*\otimes\CO_\TY(-2H) \to \Hom(F,\Omega_{\PP^4}(h)_{|S}[-1])^*\otimes\CO_\TY(-H) \}\\[1ex]
\rho^* = p_*\circ i^*\circ \TT_{\CO_\TY(-3h)[2]}\circ \LL_{\CO_\TY(2h-D)}\circ \LL_{\CO_\TY(4h-D)} \circ \sigma^*: \CA_Y^\perf \to \D^b(S),\\
G \mapsto p_*(i^*\sigma^*G(-3h))[2].
\end{array}
$$
\end{remark}
It will be interesting to describe the subcategory of $\D^b(S)$
which is mapped to zero by $\rho_*:\D^b(S) \to \CA_Y$.


\section{Appendix. The Grothendieck group of a twisted K3 surface\\associated with a cubic fourfold containing a plane}


For the computation of the Grothendieck group we will use the notion of a twisted Chern character
introduced by Huybrechts and Stellari in~\cite{HS1}, see also~\cite{HS2}.

Let $S$ be a polarized K3 surface of degree $2$ with $\Pic S = \ZZ h$ (and $h^2 = 2$),
and $\CB$ a nonsplit Azumaya algebra of rank $4$ on $S$.
Let $\alpha \in H^2(S,\CO_S^*)$ be the class of $\CB$ in the Brauer group $\Br(S) = H^2(S,\CO_S^*)$.
It follows from the exponential sequence
$$
\xymatrix@1{0 \ar[rr] && \ZZ  \ar[rr] && \CO_S \ar[rr]^{\exp(2\pi i(-))} &&\ \CO_S^* \ar[rr] && 0}
$$
and from $H^3(S,\ZZ) = 0$ that there exists $\beta \in H^2(S,\CO_S)$ such that $\alpha = \exp(2\pi i \beta)$.
Since the order of $\alpha$ is $2$, it follows that $2\beta$ is the image of some integer class $B_0 \in H^2(S,\ZZ)$,
hence $\beta$ is the image of $B = \frac12 B_0 \in H^2(S,\frac12\ZZ) \subset H^2(S,\QQ)$. Let us fix such $B$.

Certainly, the class $B \in H^2(S,\frac12\ZZ)$ such that $\alpha = \exp(2\pi i B)$ is not unique.
It is defined up to addition of an element in $H^2(S,\ZZ)$ (ambiguity in a choice of $\beta$)
and of an element in $H^{1,1}(S,\frac12\ZZ)$ (ambiguity in the lifting of $\beta$ to $B$).
However, the following invariant does not depend on the choices.

Let $\{t\} := t -\lfloor t\rfloor$ denote the fractional part of $t \in \QQ$.

\begin{lemma}\label{bhbb}
The fractional part $\{ Bh \}$ of the product $Bh$ does not depend on the choice of $B$.
Moreover, if $\{ Bh \} = \frac12$ then $\{ B^2 \}$ does not depend on the choice of $B$.
\end{lemma}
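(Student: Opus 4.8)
The plan is to pin down precisely the group in which $B$ is ambiguous and then reduce both assertions to a one-line computation in the K3 lattice $H^2(S,\ZZ)$. First I would note that, since $\Pic S = \ZZ h$, the Lefschetz $(1,1)$-theorem gives $H^{1,1}(S,\ZZ) = \ZZ h$, hence $H^{1,1}(S,\tfrac12\ZZ) = \tfrac12\ZZ h$. Therefore any two admissible choices $B,B'$ differ by an element of $H^2(S,\ZZ) + \tfrac12\ZZ h$, and after absorbing integral multiples of $h$ we may write
\[
B' = B + M + \tfrac{\epsilon}{2}\,h, \qquad M \in H^2(S,\ZZ),\quad \epsilon \in \{0,1\}.
\]
The only properties of $H^2(S,\ZZ)$ that enter are that the cup-product form is integral and even, and that $h^2 = 2$.

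For the first claim one computes $B'h - Bh = Mh + \tfrac{\epsilon}{2}h^2 = Mh + \epsilon \in \ZZ$, so $\{B'h\} = \{Bh\}$. For the second claim, expanding the square gives
\[
(B')^2 = B^2 + 2\,B\!\cdot\!M + M^2 + \epsilon\,Bh + \epsilon\,Mh + \tfrac{\epsilon^2}{4}h^2 .
\]
Here $2B\!\cdot\!M \in \ZZ$ (recall $B = \tfrac12 B_0$ with $B_0$ integral), $M^2 \in 2\ZZ$ by evenness, $\epsilon Mh \in \ZZ$, and $\tfrac{\epsilon^2}{4}h^2 = \tfrac{\epsilon^2}{2} = \tfrac{\epsilon}{2}$. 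Hence modulo $\ZZ$
\[
(B')^2 \equiv B^2 + \epsilon\!\left(Bh + \tfrac12\right) \pmod{\ZZ}.
\]
If $\epsilon = 0$ this is immediate; if $\epsilon = 1$ the hypothesis $\{Bh\} = \tfrac12$ says exactly that $Bh + \tfrac12 \in \ZZ$. Either way $\{(B')^2\} = \{B^2\}$.

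I do not expect a genuine obstacle: the argument is bookkeeping of parities. The two places where the specific hypotheses (rather than generalities about Brauer classes) are actually used are the identification $H^{1,1}(S,\tfrac12\ZZ) = \tfrac12\ZZ h$, which relies on $\Pic S = \ZZ h$, and the term $\tfrac{\epsilon^2}{4}h^2$ coming from $h^2 = 2$; it is precisely this term that survives modulo $\ZZ$ when $\epsilon = 1$, and it is what forces the extra condition $\{Bh\}=\tfrac12$ in the second assertion.
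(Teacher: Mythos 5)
Your proof is correct and follows essentially the same route as the paper's: both reduce to checking the effect on $Bh$ and $B^2$ of adding an integral class and of adding $\tfrac12 h$, using $h^2=2$ at the decisive step. One small remark: you invoke evenness of the K3 lattice to get $M^2\in 2\ZZ$, but mere integrality of $M^2$ already suffices since the computation is modulo $\ZZ$.
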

\begin{proof}
Take any $u \in H^2(S,\ZZ)$. Then
$$
(B+u)h = Bh + uh,\qquad
(B+u)^2 = B^2 + 2Bu + u^2.
$$
It is clear that $uh$, $(2B)u$ and $u^2$ are integral, so $\{Bh\}$ and $\{B^2\}$ do not change.
Further
$$
(B+\frac12 h)h = Bh + \frac12 h^2 = Bh + 1,\qquad
(B+\frac12 h)^2 = B^2 + Bh + \frac14 h^2 = B^2 + Bh + \frac12.
$$
We see that $\{Bh\}$ doesn't change and that if $\{Bh\} = \frac12$ then $\{B^2\}$ doesn't change as well.
\end{proof}

One can compute $\{Bh\}$ from the geometry of the $\PP^1$-bundle associated with the corresponding Brauer class $\alpha = \exp(2\pi i B)$.
It is an interesting question, how to compute $\{ B^2 \}$ in a similar fashion.

\begin{lemma}\label{bh}
Let $\CM \to S$ be a $\PP^1$-fibration
given by the Brauer class $\alpha = \exp(2\pi i B)$ and $C \subset S$, a smooth curve
in the linear system $|h|$ on $S$. Then there exists a rank $2$ vector bundle $E$ on $C$
such that $\CM\times_S C \cong \PP_C(E)$. Moreover
$$
\{ Bh \} = \left\{ \frac12\deg\det E \right\},
$$
for any such bundle $E$.
\end{lemma}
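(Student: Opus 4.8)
The plan is to compute $\deg\det E$ modulo $2$ by a \v{C}ech cocycle argument and identify it with the intersection number $B_0\cdot h = 2Bh$, where $B_0 = 2B\in H^2(S,\ZZ)$ is the integral class fixed above. First, the existence of $E$ and the ``for any $E$'' clause are cheap: since $C$ is a smooth projective curve over $\kk$ its Brauer group vanishes by Tsen's theorem (equivalently, in the analytic topology the exponential sequence together with $H^2(C,\CO_C) = H^3(C,\ZZ) = 0$ gives $H^2(C,\CO_C^*) = 0$), so $\alpha|_C$ is trivial and the Severi--Brauer scheme $\CM\times_S C\to C$ is $\PP_C(E)$ for some rank $2$ bundle $E$ on $C$; two such bundles differ by $E'\cong E\otimes L$ with $L\in\Pic C$, whence $\deg\det E' = \deg\det E + 2\deg L$ and $\{\frac12\deg\det E\}$ is independent of the choice, so it is enough to treat one convenient $E$.

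Next I fix compatible \v{C}ech data on $S$. Choose a fine (analytic or \'etale) open cover $\{U_i\}$ of $S$ trivializing $\CM$, with transition functions $g_{ij}\in\PGL_2(\CO(U_{ij}))$, together with a $\frac12\ZZ$-valued $2$-cocycle $\{B_{ijk}\}$ representing $B$ on this cover; set $a_{ijk} := \exp(2\pi i B_{ijk})\in\mu_2\subset\CO^*(U_{ijk})$ and $(B_0)_{ijk} := 2B_{ijk}\in\ZZ$, so that $\{a_{ijk}\}$ represents $\alpha$ and $\{(B_0)_{ijk}\}$ represents $B_0$. After refining the cover and adjusting lifts, I may take lifts $\tilde g_{ij}\in\GL_2(\CO(U_{ij}))$ of $g_{ij}$ whose obstruction cocycle is exactly $\{a_{ijk}\}$, i.e. $\tilde g_{ij}\tilde g_{jk}\tilde g_{ki} = a_{ijk}\cdot\id$. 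Applying $\det$ gives $\det\tilde g_{ij}\cdot\det\tilde g_{jk}\cdot\det\tilde g_{ki} = a_{ijk}^2 = 1$, so $\{\det\tilde g_{ij}\}$ is an honest $\CO^*$-cocycle and defines a genuine line bundle $N$ on $S$ (the determinant of a twisted sheaf $\CE$ with $\CM\cong\PP_S(\CE)$, equivalently of a local $\CB$-module). Here the hypothesis $\Pic S = \ZZ h$ with $h^2 = 2$ is decisive: $N = \CO_S(kh)$ for some $k\in\ZZ$, so $N\cdot h = 2k$ is even.

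Finally I restrict to $C$. Writing $a_{ijk}|_C = \mu_{ij}\mu_{jk}\mu_{ki}$ with $\mu_{ij}\in\CO_C^*(U_{ij}\cap C)$ (possible as $H^2(C,\CO_C^*) = 0$), the matrices $h_{ij} := \mu_{ij}^{-1}\tilde g_{ij}|_C$ form a genuine $\GL_2$-cocycle gluing to a rank $2$ bundle $E$ with $\PP_C(E)\cong\CM\times_S C$, and from $\det h_{ij} = \mu_{ij}^{-2}\det\tilde g_{ij}|_C$ we get $\det E\cong N|_C\otimes M$ with $M := [\{\mu_{ij}^{-2}\}]\in\Pic C$, hence $\deg\det E = N\cdot h + \deg M$. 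To evaluate $\deg M$ modulo $2$, refine the cover of $C$ further so that each $\mu_{ij} = \exp(2\pi i m_{ij})$ with $m_{ij}\in\CO_C(U_{ij}\cap C)$; then $(\delta m)_{ijk} - B_{ijk}|_C\in\ZZ$, both sides being logarithms of $a_{ijk}|_C$, and the \v{C}ech formula for the first Chern class gives $\deg M = -[\{2(\delta m)_{ijk}\}]$, whose reduction mod $2$ equals that of $[\{(B_0)_{ijk}|_C\}] = B_0\cdot h$. Combining with $N\cdot h\in 2\ZZ$ we obtain $\deg\det E\equiv B_0\cdot h = 2Bh\pmod 2$, and therefore $\{\frac12\deg\det E\} = \{Bh\}$.

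I expect the only real obstacle to be careful \v{C}ech bookkeeping: producing simultaneously compatible representatives of $\alpha$, $B$ and $B_0$, refining the covers enough to extract the logarithms $m_{ij}$, and tracking the determinant homomorphism without sign or factor errors. The single conceptual ingredient is light --- that the determinant of a $\GL_2$-lift of the transition cocycle of $\CM$ is an honest line bundle on $S$, which on a degree $2$ K3 surface of Picard number $1$ automatically has even degree on $h$.
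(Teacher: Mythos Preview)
Your proposal is correct, and the bookkeeping you worry about does go through: once the cover is fine enough, you can indeed arrange the $\GL_2$-lifts to have obstruction exactly the $\mu_2$-cocycle $\exp(2\pi i B_{ijk})$, the determinant $N$ is then an honest line bundle on $S$, and the Chern-class computation on $C$ via the exponential sequence gives $\deg M\equiv B_0\cdot h\pmod 2$ as you claim.

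Your route, however, differs from the paper's. The paper never introduces the auxiliary line bundle $N$ on $S$; instead it argues purely cohomologically. First it identifies $\{Bh\}$ with the restriction $\alpha_{0}|_C\in H^2(C,\mu_2)\cong\frac12\ZZ/\ZZ$ via the commutative square coming from $\exp(2\pi i\,\cdot):\frac12\ZZ\to\mu_2$. Then it identifies $\{\frac12\deg\det E\}$ with the same element using the exact sequence $1\to\mu_2\to\GL_2\xrightarrow{(p,\det)}\PGL_2\times G_m\to 1$ on $C$: the class of $E$ in $H^1(C,\GL_2)$ projects to $[\CM_C]$ in $H^1(C,\PGL_2)$ and to $\det E$ in $H^1(C,\CO_C^*)$, and the two boundary maps into $H^2(C,\mu_2)$ agree (up to an irrelevant sign). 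What your approach buys is an entirely explicit cocycle picture; what it costs is the extra input $\Pic S=\ZZ h$, $h^2=2$ to force $N\cdot h$ even --- the paper's diagram-chase needs no such hypothesis on $\Pic S$ and hence proves the lemma in greater generality. In the present context both are valid since those hypotheses are standing assumptions in the Appendix.
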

\begin{proof}
A vector bundle $E$ exists since the Brauer group of a smooth curve is trivial.
The equality of fractional parts can be deduced as follows. Note that the class
$\alpha \in H^2(S,\CO_S^*)$ is of order $2$, hence it comes from a class $\alpha_0 \in H^2(S,\mu_2)$,
where $\mu_2$ stands for the group of square roots of unity.
Note also that we have a canonical isomorphism
$$
H^2(C,\mu_2) \cong \mu_2 \cong \frac12\ZZ / \ZZ.
$$
Let us first check that $\{ Bh \}$ coincides with the image of the restriction $\alpha_{0|C} \in H^2(C,\mu_2)$ under this isomorphism.
Indeed, this follows immediately from the commutative diagram
$$
\xymatrix{
H^2(S,\frac12\ZZ) \ar[r] \ar[d] & H^2(C,\frac12\ZZ) \ar[d] \ar@{=}[r] & \frac12\ZZ \ar[d]^{\{-\}} \\
H^2(S,\mu_2) \ar[r] & H^2(C,\mu_2) \ar@{=}[r] & \frac12\ZZ / \ZZ
}
$$
in which the vertical arrows are induced by the map $\exp(2\pi i(-)) : \frac12\ZZ \to \mu_2$,
and the horizontal arrows are given by the restriction to $C$.

So, it remains to check that $\alpha_{0|C}$ equals $\left\{ \frac12\deg\det E \right\}$.
For this we consider exact sequence of groups
$$
\xymatrix@1{ 1 \ar[rr] && \mu_2 \ar[rr] && \GL_2 \ar[rr]^-{(p,\det)} && \PGL_2\times G_m \ar[rr] && 1},
$$
where $p:\GL_2 \to \PGL_2$ is the canonical projection. From this we obtain an exact sequence of cohomologies
$$
H^1(C,\GL_2(\CO_C)) \to H^1(C,\PGL_2(\CO_C)) \oplus H^1(C,\CO^*_C) \to H^2(C,\mu_C)
$$
which can be rewritten as the following commutative (up to a sign which in $H^2(C,\mu_C) = \frac12\ZZ / \ZZ$ is insignificant) diagram
$$
\xymatrix{
H^1(C,\GL_2(\CO_C)) \ar[r]^-\det \ar[d]_p & H^1(C,\CO^*_C) \ar[d]^{\{\frac12\deg(-)\}} \\
H^1(C,\PGL_2(\CO_C)) \ar[r] & H^2(C,\mu_C)
}
$$
By definition, $\alpha_{0|C}$ comes from the class in $H^1(C,\PGL_2(\CO_C))$ of the restriction
of the Azumaya algebra~$\CB_{|C}$ and the class of $E$ in $H^1(C,\GL_2(\CO_C))$ is its lift.
On the other hand, the top horizontal and the right vertical arrows take $E$ precisely to $\{\frac12\deg\det E\}$.
\end{proof}

Consider the bounded derived category $\D^b(S,\CB)$ of coherent sheaves of $\CB$-modules on the surface $S$.
Its Grothendieck group comes with the Euler bilinear form on it
$$
\chi_{(S,\CB)}([F],[G]) = \sum (-1)^i \dim\Ext^i(F,G).
$$
We denote by $K_0(S,\CB)$ the quotient of the Grothendieck group by the kernel of the Euler form,
i.e. the {\em numerical Grothendieck group}.

\begin{lemma}\label{k0}
Assume that $\{Bh\} = \{ B^2 \} = \frac12$. Then there is no such pair of vectors $v_1,v_2 \in K_0(S,\CB)$
for which $\chi_{(S,\CB)}(v_1,v_2) = 1$, $\chi_{(S,\CB)}(v_2,v_2) = 0$.
\end{lemma}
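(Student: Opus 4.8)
The plan is to use the twisted Chern character to turn $\chi_{(S,\CB)}$ into a Mukai--type form on an explicit rank-$3$ lattice, and then to finish by a parity argument. By~\cite{HS1,HS2}, applied to $\D^b(S,\CB)$ (equivalent to the derived category $\D^b(S,\alpha)$ of $\alpha$-twisted sheaves, where $\alpha=\exp(2\pi i B)$ is the class of $\CB$ in $\Br(S)$ and $B=\tfrac12 B_0$, $B_0\in H^2(S,\ZZ)$), the twisted Mukai vector $v^B(F):=\ch^B(F)\sqrt{\td_S}$ takes values in $\widetilde H^{1,1}(S,B,\ZZ)$ and satisfies
$$
\chi_{(S,\CB)}(F,G) = -\big\langle v^B(F), v^B(G) \big\rangle,
$$
where $\langle (r,l,s),(r',l',s') \rangle = l\cdot l' - rs' - r's$ is the Mukai pairing and $\widetilde H^{1,1}(S,B,\ZZ)\subset H^*(S,\ZZ)$ is the sublattice of integral classes of type $(1,1)$ for the $B$-twisted Hodge structure, i.e. of those $(r,l,s)\in H^*(S,\ZZ)$ for which $l-rB$ is orthogonal (for cup product) to a holomorphic two-form and to its conjugate. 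Since $\widetilde H^{1,1}(S,B,\ZZ)$ is nondegenerate, $v^B$ descends to $K_0(S,\CB)$, and a hypothetical pair $v_1,v_2\in K_0(S,\CB)$ with $\chi_{(S,\CB)}(v_1,v_2)=1$, $\chi_{(S,\CB)}(v_2,v_2)=0$ would produce $w_1,w_2\in\widetilde H^{1,1}(S,B,\ZZ)$ with $\langle w_1,w_2\rangle=-1$, $\langle w_2,w_2\rangle=0$; so it suffices to rule out such a pair in $\widetilde H^{1,1}(S,B,\ZZ)$.

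The next step is to describe this lattice explicitly. The condition $l-rB=l-\tfrac r2 B_0\in H^2(S,\QQ)\cap H^{1,1}(S)=NS(S)_\QQ=\QQ h$ forces $r$ to be even: if $r$ were odd, then $\tfrac12 B_0$ would lie in $H^2(S,\ZZ)+\QQ h$, and since $h$ maps to $0$ in $H^2(S,\CO_S)$ the class $\beta$, hence $\alpha$, would lie in the image of $H^2(S,\ZZ)$ in $\Br(S)$, i.e. be trivial --- contradicting that $\CB$ is nonsplit. Writing $r=2a$, integrality of $l$ together with primitivity of $h$ gives $l=aB_0+bh$ with $b\in\ZZ$. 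Hence $\widetilde H^{1,1}(S,B,\ZZ)$ has $\ZZ$-basis $(2,B_0,0)$, $(0,h,0)$, $(0,0,1)$, and in this basis the Gram matrix of the Mukai pairing is
$$
\begin{pmatrix} B_0^2 & B_0\cdot h & -2 \\ B_0\cdot h & 2 & 0 \\ -2 & 0 & 0 \end{pmatrix}.
$$
Finally, since $B=\tfrac12 B_0$ we have $Bh=\tfrac12 B_0\cdot h$ and $B^2=\tfrac14 B_0^2$, so the hypotheses $\{Bh\}=\{B^2\}=\tfrac12$ say exactly that $B_0\cdot h$ is odd and $B_0^2\equiv 2\pmod 4$.

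Now the parity argument. For $w=(2a,aB_0+bh,c)$ one computes, using $B_0^2\equiv 2\pmod 4$, $B_0\cdot h$ odd and $h^2=2$,
$$
\langle w,w\rangle = (aB_0+bh)^2 - 4ac \equiv 2(a^2+ab+b^2)\pmod 4.
$$
Thus $\langle w,w\rangle=0$ forces $a^2+ab+b^2$ to be even, which happens only if $a$ and $b$ are both even; writing $a=2a_1$, $b=2b_1$ we get $w=(4a_1,\,2(a_1B_0+b_1h),\,c)$. Then for any $v=(2a',a'B_0+b'h,c')$ the value $\langle v,w\rangle = 2(a'B_0+b'h)\cdot(a_1B_0+b_1h) - 2a'c - 4a_1c'$ is even, so $\chi_{(S,\CB)}(v,w)=-\langle v,w\rangle\neq 1$. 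Hence no pair $w_1,w_2$ with $\langle w_2,w_2\rangle=0$ can have $\langle w_1,w_2\rangle=\pm 1$, which finishes the proof. The one genuinely delicate point is the first step: fixing the normalization of $\ch^B$ so that $v^B$ lands in the $(1,1)$-part of the \emph{untwisted} lattice $H^*(S,\ZZ)$ carrying the $B$-twisted Hodge structure --- it is precisely this that makes $\widetilde H^{1,1}(S,B,\ZZ)$ a genuinely new lattice (not isometric to the ordinary algebraic Mukai lattice), and the two congruences on $B_0$ are exactly what the argument then exploits; the remaining computations are routine.
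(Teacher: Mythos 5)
Your proof is correct and follows essentially the same route as the paper: via the Huybrechts--Stellari twisted Chern character you identify $K_0(S,\CB)$ with a rank-$3$ sublattice of the Mukai lattice (showing the rank coordinate must be even because $\CB$ is nonsplit), and then rule out the pair $(v_1,v_2)$ by a parity argument on the resulting Gram matrix. The differences from the paper are only cosmetic --- a slightly different choice of lattice basis (yours differs from the paper's first generator by an integer multiple of the point class) and working mod $4$ on $\chi_{(S,\CB)}(v_2,v_2)$ rather than mod $2$ after dividing by $2$ --- and both reduce to the same congruence $a^2+ab+b^2\equiv 0\pmod 2$.
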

\begin{proof}
Recall that $h \in H^{1,1}(S,\ZZ)$ denotes the positive generator of $\Pic S$ and
let $p \in H^4(S,\ZZ)$ be the class of a point.
By~\cite{HS1}, Proposition~1.2, there exists a linear map (the {\em $B$-twisted Chern character})
$$
\ch^B:K_0(S,\CB) \to H^\bullet(X,\QQ),
$$
such that:
\begin{enumerate}
\item $\Im \ch^B = H^{*,*}(S,B,\ZZ) := \exp(B)(H^0(S,\QQ) \oplus H^{1,1}(S,\QQ) \oplus H^4(S,\QQ)) \cap H^\bullet(S,\ZZ)$
(Corollary~2.4 and Remark~1.3~(ii));
\medskip
\item $\chi_{(S,\CB)}(F,G) = \langle \ch^B(F)\sqrt{\td(S)}, \ch^B(G)\sqrt{\td(S)} \rangle$,
where $\langle -,- \rangle$ is the Mukai pairing
$$
\langle r_1 + d_1 h + s_1 p , r_2 + d_2 h + s_2 p \rangle = r_1s_2 - 2d_1d_2 + s_1r_2,
\qquad
r_i,d_i,s_i \in \ZZ.
$$
\end{enumerate}

Let us describe $H^{*,*}(S,B,\ZZ)$.
We have $H^0(S,\QQ) \oplus H^{1,1}(S,\QQ) \oplus H^4(S,\QQ) = \{ r + dh + sp\ |\ r,d,s \in \QQ \}$
since $H^{1,1}(S,\QQ) = \QQ h$.
Further
$$
\exp(B)(r + dh + sp) = r + (rB + dh) + (rB^2/2 + dBh + s)p.
$$
So, to obtain an element of $H^{*,*}(S,B,\ZZ)$ we must have
$$
r \in \ZZ,\qquad
rB + dh \in H^2(S,\ZZ),\qquad
rB^2/2 + dBh + s \in \ZZ.
$$
Let us show that $r$ is even. Indeed, for $r$ odd $rB + dh \in H^2(S,\ZZ)$ would imply
$B + dh \in H^\bullet(S,\ZZ)$, hence the image of $B$ in $H^2(S,\CO_S)/H^2(S,\ZZ) = H^2(S,\CO_S^*)$ would be zero.
So, $r$ is even. Therefore $rB \in H^2(S,\ZZ)$, hence $dh \in H^2(S,\ZZ)$, so $d \in \ZZ$.
We conclude that $H^{*,*}(S,B,\ZZ)$ is generated by elements $2+2B$, $h$ and $p$,
and it is easy to see that the matrix of the bilinear form $\chi_{(S,\CB)}$ in this basis is
$$
\left(\begin{matrix} 8 - 4B^2 & -2Bh & 2 \\ -2Bh & -2 & 0 \\ 2 & 0 & 0 \end{matrix}\right).
$$
Note that the only potentially odd integer in the matrix is $-2Bh$, all the rest are definitely even.
So, if we have $\chi_{(S,\CB)}(v_1,v_2) = 1$ then the coefficient of $v_2$ at $h$
(in the decomposition of $v_2$ with respect to the above basis of $K_0(S,\CB)$) is odd.
Thus, it suffices to check that for any $v_2 \in K_0(S,\CB)$ such that $\chi_{(S,\CB)}(v_2,v_2) = 0$
the coefficient of $v_2$ at $h$ is even.

Indeed, assume that $v_2 = x(2+2B) + yh +zp$. Then
$$
0 = \chi_{(S,\CB)}(v_2,v_2) = (8-4B^2)x^2 -4Bh xy + 4xz -2y^2
$$
implies $y^2 = 4x^2 + 2xz - x(2B^2x - 2Bhy)$. Taking into account that $2B^2 \equiv 2Bh \equiv 1 \bmod 2$
we deduce that $y^2 \equiv x(x+y) \bmod 2$. It is clear that for $y \equiv 1 \bmod 2$
this has no solutions, so we conclude that $y$ should be even.
%
\end{proof}

In what follows we check that for the Azumaya algebra $\CB$ on a K3 surface $S$ arising
from a cubic fourfold $Y$ containing a plane the conditions of Lemma~\ref{k0}
are satisfied. Then it will follow that $\D^b(S,\CB) \not\cong \D^b(S')$ for any surface $S'$.
We will use freely the notation introduced in Section~\ref{secplane}.

\begin{lemma}\label{cbb}
We have $\{Bh\} = \{B^2\} = \frac12$.
\end{lemma}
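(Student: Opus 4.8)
The plan is to prove the two equalities by quite different routes: $\{Bh\}$ via the geometric recipe of Lemma~\ref{bh}, and then $\{B^2\}$ by combining the first equality with a Chern-class computation for the Azumaya algebra $\CB$. For $\{Bh\}$, choose a general line $\ell\subset\PP(B)$ and put $C=f^{-1}(\ell)$. Since $f$ is branched along the degeneration curve, $C\in|h|$ is a smooth double cover of $\ell\cong\PP^1$ branched at the $6$ points of $\ell\cap(\text{degeneration curve})$, hence a curve of genus $2$. By construction $\CM\times_S C$ is the relative variety of lines in the fibres of the quadric surface fibration $\pi^{-1}(\ell)\to\ell$, which has exactly those $6$ degenerate (rank $3$) fibres; so $\CM\times_S C\cong\PP_C(E)$, where $E$ is the restriction to $C$ of the half-spinor sheaf of this fibration (an honest rank-$2$ bundle, the Brauer twisting being trivial over a curve). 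By Lemma~\ref{bh}, $\{Bh\}=\bigl\{\tfrac12\deg\det E\bigr\}$, so what has to be shown is that $\deg\det E$ is \emph{odd}. I would obtain this by analysing the even Clifford algebra restricted to $\ell$, namely $\CO_{\PP^1}\oplus\CO_{\PP^1}(-1)^{\oplus 3}\oplus\CO_{\PP^1}(-2)^{\oplus 3}\oplus\CO_{\PP^1}(-3)$, together with its half-spinor module --- or, more geometrically, by a local computation at each of the $6$ degenerate fibres, each of which contributes a half-integral shift, so that $\deg\det E\equiv 6/2=3\pmod 2$. This gives $\{Bh\}=\tfrac12$, and by Lemma~\ref{bhbb} the quantity $\{B^2\}$ is now well defined.

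For $\{B^2\}$, first compute $c_2(\CB)$. By Lemma~\ref{ppb-s}, $f_*\CB=\CB_0$; and from the formula for $\CB_0$ (with $A\otimes\CO\oplus\CO(-1)$ the ambient rank-$4$ bundle on $\PP(B)$) one gets $\CB_0\cong\CO\oplus\CO(-1)^{\oplus 3}\oplus\CO(-2)^{\oplus 3}\oplus\CO(-3)$, so $\ch(\CB_0)=8-12 h_0+12 h_0^2$, where $h_0$ is the hyperplane class of $\PP(B)$. A Grothendieck--Riemann--Roch computation for the double cover $f\colon S\to\PP(B)$ --- using $c_1(\CB)=0$ (since $\CB\cong\CS^\vee\otimes\CS$ below) and $\ch(\CB_0)\,\td(\PP(B))=f_*\bigl(\ch(\CB)\,\td(S)\bigr)$ --- then yields $c_2(\CB)=6$, i.e.\ $\ch_2(\CB)=-6$. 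Now write $\CB$ as the endomorphism algebra of an $\alpha$-twisted rank-$2$ sheaf $\CS$ on $S$, so that $\CB\cong\CS^\vee\otimes\CS$ as a sheaf (and $E\cong\CS|_C$ after untwisting). By Lemma~\ref{k0}~(1), $\ch^B(\CS)\in H^{*,*}(S,B,\ZZ)$, and since every class there has even rank (as shown in the proof of Lemma~\ref{k0}) we may write $\ch^B(\CS)=(2+2B)+b\,h+c\,p$ with $b,c\in\ZZ$; its degree-$2$ component is $W:=B_0+b\,h\in H^2(S,\ZZ)$, where $B_0=2B$. Since $\ch^{-B}(\CS^\vee)$ is obtained from $\ch^B(\CS)$ by the sign change $W\mapsto-W$ in degree $2$, multiplicativity of the twisted Chern character (\cite{HS1}) gives $\ch(\CB)=\ch^{-B}(\CS^\vee)\cdot\ch^B(\CS)=(2-W+c\,p)(2+W+c\,p)=4+(4c-W^2)\,p$, the degree-$2$ part vanishing and all $W\cdot p$ and $p\cdot p$ terms being zero on a surface; hence $\ch_2(\CB)=4c-W^2$, so $W^2=4c+6$. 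Finally $W^2=B_0^2+2b\,(B_0h)+2b^2$, and by the first part $B_0h$ is odd, so $2b(B_0h)\equiv 2b\equiv 2b^2\pmod 4$, whence $2b(B_0h)+2b^2\equiv 0\pmod 4$ and $B_0^2\equiv 4c+6\equiv 2\pmod 4$. Therefore $B^2=B_0^2/4$ has fractional part $\tfrac12$, and the lemma follows.

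The genuine difficulty is concentrated in the parity statement of the first step: that the half-spinor bundle $E$ on the genus-$2$ curve $C$ has determinant of odd degree. Everything in the second paragraph is bookkeeping once $c_2(\CB)=6$ is in hand, but this is the one place where the concrete geometry of the quadric fibration --- the degree-$6$ degeneration curve, equivalently the $6$ degenerate fibres over a general line --- really enters, and fixing the local contribution of a single degenerate fibre, with the right normalisation, is the delicate point.
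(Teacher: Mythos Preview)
Your argument for $\{B^2\}=\tfrac12$ is correct and is essentially a reorganisation of the paper's. Both extract the answer from a numerical invariant of $\CB_0$: you compute $c_2(\CB)=6$ via Grothendieck--Riemann--Roch for $f$ and unwind through the multiplicativity of $\ch^B$, while the paper computes $\chi_{(S,\CB)}(\CB,\CB)=\chi_{\PP(B)}(\CB_0)=2$ by adjunction and reads off $4B^2\equiv 2\bmod 4$ from the Mukai-pairing formula for $\chi$. Same content, different packaging.

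The genuine gap is exactly where you place it: the parity of $\deg\det E$. You offer two routes and carry out neither. The heuristic ``each of the six degenerate fibres contributes a half-integral shift, so $\deg\det E\equiv 6/2\pmod 2$'' is not a proof, and as stated it cannot be one: the degree of $\det E$ depends on the ambient projective bundle $\PP_\ell(\CO^3\oplus\CO(-1))$ and on the line-bundle twist of the quadratic form, not only on the number of singular fibres, so the discriminant degree alone does not determine the parity. The Clifford-algebra route could in principle be made to work, but knowing $\CB_0|_\ell$ only as an $\CO_\ell$-module is not enough; one needs the algebra structure, and extracting the $\mu_2$-invariant from it is precisely the hard step you have not done.

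The paper's argument is entirely different and uses the one piece of geometry you have not exploited: the plane $\BP=\PP(A)$ itself. Every line in a fibre quadric $Q_\lambda$ meets the conic $q_\lambda:=Q_\lambda\cap\BP$ in a single point, identifying the family of lines in $Q_\lambda$ with $q_\lambda$ (or its components). So if $\TBP\to\ell$ denotes the conic bundle obtained by blowing up the four base points of the pencil $\{q_\lambda\}$ on $\BP$, then $\CM_C$ is obtained from $\TBP\times_\ell C$ by contracting, over each of the three points $\lambda_i\in\ell$ where $q_{\lambda_i}=q_{\lambda_i}^+\cup q_{\lambda_i}^-$ is reducible, the component $q_{\lambda_i}^+$ in the fibre over $\lambda_i^+\in C$ and $q_{\lambda_i}^-$ in the fibre over $\lambda_i^-\in C$. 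Contracting instead the \emph{same} component over both preimages yields $\PP_C(f^*E_0)$ for some $E_0$ on $\ell$, whose determinant pulled back to $C$ has even degree; hence $\CM_C$ differs from $\PP_C(f^*E_0)$ by exactly three simple Hecke transformations, each of which flips the parity of $\deg\det$. Thus $\deg\det E$ is odd. The operative number is $3$ --- the number of reducible conics in the pencil on $\BP$ --- not $6$.
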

\begin{proof}
Let us start with a computation of $\{ Bh \}$.
Recall that we have a fibration in 2-dimensional quadrics $\pi:\TY \to \PP(B)$ and $\CM$ is the Hilbert scheme
of lines in fibers of $\pi$. Let $L$ be a generic line in $\PP(B)$ and $C = L\times_{\PP(B)} S = f^{-1}(L)$.
Restricting the fibration $\pi:\TY \to \PP(B)$ to $L$ we obtain a pencil of quadrics $\{Q_\lambda\}_{\lambda \in L}$.
Then $\CM_C = \CM\times_S C$ is the Hilbert scheme of lines in $Q_\lambda$.
By Lemma~\ref{bh} it suffices to check that $\CM_C \cong \PP_C(E)$ with $\deg\det E$ being odd.

Consider each $Q_\lambda$ as a quadric in $\PP(V)$.
It intersects with $\BP = \PP(A)$ in a conic $q_\lambda$, so we have a pencil of conics $\{q_\lambda\}_{\lambda \in L}$ in $\BP$.
It is clear that for sufficiently general $L$ the following two conditions are satisfied:
\begin{enumerate}
\item the base locus of the pencil $\{q_\lambda\}$ on $\BP$ is a quadruple of distinct points such that
any triple of them is noncollinear;
\item the points $\lambda'_1,\dots,\lambda'_6 \in L$ corresponding to singular quadrics $Q_\lambda$ with $\lambda \in L$
(and hence to the ramification points of $f:C \to L$) are pairwise distinct from the points $\lambda_1,\lambda_2,\lambda_3 \in L$
corresponding to reducible conics~$q_\lambda$.
\end{enumerate}

We will assume that for the chosen $L$ both properties $(1)$ and $(2)$ are satisfied.

Our main observation is that each line on a quadric $Q_\lambda$ intersects the conic $q_\lambda = Q_\lambda \cap \BP$ in a unique point, hence the Hilbert scheme
of lines on $Q_\lambda$ identifies either with
$$
\begin{cases}
q_\lambda\sqcup q_\lambda, & \text{if $q_\lambda$ is irreducible and $Q_\lambda$ is smooth
(i.e. $\lambda \not\in \{\lambda_1,\lambda_2,\lambda_3,\lambda'_1,\lambda'_2,\lambda'_3,\lambda'_4,\lambda'_5,\lambda'_6\}$);}\\
q_\lambda, & \text{if $q_\lambda$ is irreducible and $Q_\lambda$ is singular
(i.e. $\lambda \in \{\lambda'_1,\lambda'_2,\lambda'_3,\lambda'_4,\lambda'_5,\lambda'_6\}$);}\\
q_\lambda^+\sqcup q_\lambda^-, & \text{if $q_\lambda = q_\lambda^+\cup q_\lambda^-$ is reducible and $Q_\lambda$ is smooth
(i.e. $\lambda \in \{\lambda_1,\lambda_2,\lambda_3\}$).}
\end{cases}
$$
An immediate consequence is the following. The blowup $\TBP$ of $\BP$ in the base locus of $q_\lambda$
has a natural structure of a conic bundle over $L$ with three reducible fibers (over the points $\lambda_i$).
Let $\TBP_C = \TBP\times_L C$. This is a conic bundle over $C$ with 6 reducible fibers, over the preimages $\lambda^\pm_i \in C$
of points $\lambda_i \in L$. Then $\CM_C$ is obtained from $\TBP_C$ by contracting the components $q_{\lambda_i}^+$ in the fibers over $\lambda^+_i$
and $q_{\lambda_i}^-$ in the fibers over $\lambda^-_i$. It follows that a vector bundle $E$ on $C$ such that $\CM_C = \PP_C(E)$
can be obtained as follows.

First, consider the contraction of components $q_{\lambda_i}^+$ in the fibers of $\TBP$ over $\lambda_i \in L$.
We will obtain a $\PP^1$-fibration over $L$ which is isomorphic to $\PP_L(E_0)$ for some vector bundle $E_0$
on $L$ of rank $2$. Then $\PP_C(f^*E_0)$ is obtained from $\TBP_C$ by contracting the components $q_{\lambda_i}^+$
both in the fibers over $\lambda^+_i$ and in the fibers over $\lambda^-_i$, whereof it follows that $\CM_C$
is obtained from $\PP_C(f^*E_0)$ by simple Hecke transformations in the fibers over the points $\lambda^-_i$
(a simple Hecke transformation of a $\PP^1$-bundle is a a blow-up of a point followed by contraction of the proper
preimage of the fiber containing this point). So, $\PP_C(E)$ differs from $\PP_C(f^*E_0)$ by three simple Hecke transformations.
It remains to note that $\deg\det f^*E_0 = \deg f^*\det E_0$ is even and to use the well-known fact
that whenever $\PP_C(E)$ and $\PP_C(E')$ are related by a simple Hecke transformation the parity of $\deg\det E$
and $\deg\det E'$ is different, so after three Hecke transformations we obtain $\PP_C(E)$ with $\deg\det E$ being odd.

Now, to compute $\{B^2\}$ we are going to use Lemma~\ref{k0}. By this Lemma for any $\alpha$-twisted sheaf $F$ on $S$ of rank 2 we have
$\chi_{(S,\alpha)}(F,F) = \langle \ch^B(F)\sqrt{\td(S)},\ch^B(F)\sqrt{\td(S)} \rangle$.
On the other hand, for any $\alpha$-twisted sheaf of rank $2$ we have
$\ch^B(F) = 2+2B+dh+s$ for some $d,s \in \ZZ$, so
$$
\chi_{(S,\alpha)}(F,F) = 8 - 4B^2 + 4s  - 2d^2 - 4dBh.
$$
Note that $8 + 4s  - 2d^2 - 4dBh = 4s - 2d(d + 2Bh)$ is divisible by~$4$. Indeed,
$2d$ is divisible by~$4$ for even~$d$, while for $d$ odd $2(d + 2Bh)$ is divisible by $4$.
We conclude that $\chi_{(S,\alpha)}(F,F) \equiv 4B^2 \bmod 4$.

On the other hand, under identification of the category of sheaves of $\CB$-modules on $S$
with the category of $\alpha$-twisted sheaves, the sheaf $\CB$ corresponds to a certain rank $2$ twisted sheaf.
So, we conclude that $\chi_{(S,\CB)}(\CB,\CB) \equiv 4B^2 \bmod 4$.

Now consider the covering $f:S \to \PP(B)$. We have the pushforward and the pullback functors $f_*:\D^b(S,\CB) \to \D^b(\PP(B))$,
$f^*:\D^b(\PP(B)) \to \D^b(S,\CB)$. Note that $f^*\CO_{\PP(B)} \cong \CB$, so by adjunction
$$
\chi_{(S,\CB)}(\CB,\CB) =
\chi_{(S,\CB)}(f^*\CO_{\PP(B)},\CB) =
\chi_{\PP(B)}(\CO_{\PP(B)},f_*\CB) =
\chi_{\PP(B)}(\CO_{\PP(B)},\CB_0) =
\chi_{\PP(B)}(\CB_0).
$$
It remains to note that $\CB_0 \cong \CO_{\PP(B)} \oplus (\Lambda^2A \otimes \CO_{\PP(B)}(-1) \oplus A \otimes \CO_{\PP(B)}(-2)) \oplus \Lambda^3A \otimes \CO_{\PP(B)}(-3)$,
so $\chi_{\PP(B)}(\CB_0) = 2$, whence $\{B^2\} = 2/4 = 1/2$.
\end{proof}

Now we can give a proof of Proposition~\ref{dbsb}.
Combining Lemma~\ref{cbb} with Lemma~\ref{k0} we conclude that $K_0(S,\CB)$ does not contain a pair of
elements $(v_1,v_2)$ such that $\chi_{(S,\CB)}(v_1,v_2) = 1$, $\chi_{(S,\CB)}(v_2,v_2) = 0$.
On the other hand, for any surface $S'$ in $K_0(S')$ there is such a pair. Indeed, just take
$v_1 = [\CO_{S'}]$, $v_2 = [\CO_p]$ where $\CO_p$ is a structure sheaf of a point.
We conclude that $K_0(S,\CB) \not\cong K_0(S')$ which implies that $D^b(S,\CB) \not\cong D^b(S')$.

\end{document}